\documentclass[onefignum,onetabnum]{siamart190516}


%
%

\usepackage{lipsum}
\usepackage{amsfonts}
\usepackage{amssymb}
\usepackage{graphicx}
\usepackage[export]{adjustbox}
\usepackage{epstopdf}
\usepackage{algorithmic}
\ifpdf
  \DeclareGraphicsExtensions{.eps,.pdf,.png,.jpg}
\else
  \DeclareGraphicsExtensions{.eps}
\fi


\newsiamremark{remark}{Remark}
\newsiamremark{hypothesis}{Hypothesis}
\crefname{hypothesis}{Hypothesis}{Hypotheses}
\newsiamthm{claim}{Claim}

\headers{A least-squares RBF-FD method}{I. Tominec, E. Larsson, and A. Heryudono}

\title{A least squares radial basis function finite difference method with improved stability properties \thanks{This is a preprint. The final print is published in SIAM journal on Scientific Computing.}}

\author{Igor Tominec\thanks{Uppsala  University, Department  of  Information  Technology, SE-751 05 Uppsala, Sweden
  (\email{igor.tominec@it.uu.se}, \email{elisabeth.larsson@it.uu.se}).}
\and Elisabeth Larsson\footnotemark[1]
\and Alfa Heryudono\thanks{University  of  Massachusetts  Dartmouth,  Department  of  Mathematics,  Dartmouth,  MA~02747, USA 
(\email{aheryudono@umassd.edu}).} \funding{The work of the first and the second author was supported by the Swedish Research Council, grant
no.~2016-04849. The work of the third author was supported by the National Science Foundation grant
NSF-DMS 2012011. }}

\usepackage{amsopn}

%
%
\newcommand{\LL}{\mathcal{L}}


\DeclareMathOperator*{\argmin}{arg\,min}
\newcommand{\new}[1]{\textcolor{black}{ #1}}
\newcommand{\nnew}[1]{\textcolor{black}{ #1}}

\ifpdf
\hypersetup{
  pdftitle={A least-squares RBF-FD method},
  pdfauthor={I. Tominec, E. Larsson, and A. Heryudono}
}
\fi




\begin{document}

\maketitle

\begin{abstract}
  Localized collocation methods based on radial basis functions (RBFs) 
  for elliptic problems appear to be non-robust in the presence of Neumann boundary conditions. 
  In this paper, we overcome this issue by formulating the RBF-generated finite difference method in a discrete least-squares 
  setting instead. 
  This allows us to prove high-order convergence under node refinement and to numerically verify that 
  the least-squares formulation is more accurate and robust than the collocation formulation. 
  The implementation effort for the modified algorithm is comparable to that for the collocation method.
\end{abstract}

\begin{keywords}
  radial basis function, least-squares, partial differential equation, elliptic problem, Neumann condition, RBF-FD
\end{keywords}

\begin{AMS}
  65N06, 65N12, 65N35
\end{AMS}

\section{Introduction}
Radial basis function-generated finite difference methods \newline(RBF-FD) generalize classical finite difference methods (FD) to scattered node settings. However, while FD uses tensor products of one-dimensional derivative approximations, RBF-FD directly computes multivariate approximations, which is advantageous when differentiation is not aligned with a coordinate direction~\cite{FoFlyRu10}. In this paper, we generalize RBF-FD to a 
least squares setting (RBF-FD-LS), which improves stability and accuracy.

RBF-FD is a meshfree method, which provides flexibility with respect to the geometry. In contrast to FD methods where an entire coordinate dimension is affected by adaptive refinement, RBF-FD allows for coordinate independent local adaptivity~\cite{MivSy18}.

The RBF-FD method was first introduced by Tolstykh in 2000~\cite{Tol00}, and other early papers include~\cite{ShuDiYe03,WriFo06}. The method is based on the idea that given scattered nodes $x_j\in\mathbb{R}^d$, $j=1,\ldots,n$, in the neighborhood of a point $x$, we can create a localized RBF approximation of the function $u(x)$ using these 'stencil points',
\begin{equation}
  u_h(x)=\sum_{j=1}^nc_j\phi(\|x-x_j\|)\equiv\sum_{j=1}^nc_j\phi_j(x),
\end{equation}
where $h$ is a measure of the inter-nodal distance, $\phi(r)$ is a radial basis function, and $c_j$ are unknown coefficients. The interpolation conditions $u_h(x_i)=u(x_i)$ lead to the linear system
\begin{equation}
  \underbrace{\begin{pmatrix}
    \phi_1(x_1) & \cdots &\phi_n(x_1)\\
    \vdots && \vdots\\
    \phi_1(x_n) & \cdots & \phi_n(x_n)
  \end{pmatrix}}_A
  \begin{pmatrix}
    c_1\\\vdots\\c_n
  \end{pmatrix}
  =
  \begin{pmatrix}
    u(x_1)\\\vdots\\ u(x_n)
  \end{pmatrix}.
  \label{eq:int1}
\end{equation}  
If we let $\underline{c}=(c_1,\ldots,c_n)^T$ and $\underline{u}=(u(x_1),\ldots,u(x_n))^T$, we have that $\underline{c}=A^{-1}\underline{u}$. A benefit of using RBFs is that for commonly used radial functions $\phi(r)$ the matrix $A$ is guaranteed to be non-singular for distinct node points~\cite{Scho38,Micchelli86}. We can then proceed to apply an operator to the approximation:
\begin{eqnarray}
  \mathcal{L}u_h(x) &=& \sum_{j=1}^nc_j\mathcal{L}\phi_j(x) \nonumber \\
                    &=&\underbrace{(\mathcal{L}\phi_1(x),\ldots, \mathcal{L}\phi_n(x))}_{a^\mathcal{L}}\underline{c}= a^\mathcal{L}A^{-1}\underline{u}\equiv(\mathcal{L}\psi_1(x),\ldots,\mathcal{L}\psi_n(x))\underline{u} \equiv w^\mathcal{L}\underline{u},
  \label{eq:w1}
\end{eqnarray}
where $\{\psi_j(x)\}_{j=1}^n$, forms a cardinal basis for the local interpolant, i.e., $\psi_j(x_i)=\delta_{ij}$, and $w^\mathcal{L}$ are the stencil weights used for approximating the operator at the point $x$.

  
In the early work on RBF-FD, infinitely smooth RBFs as the Gaussian RBF with $\phi(r)=\exp(-r^2)$ or the multiquadric RBF with $\phi(r)=\sqrt{1+r^2}$ were used. Lately, there has been an increasing interest in using piecewise smooth polyharmonic splines (PHS) with $\phi(r)=|r|^{2k-1}$, $k\geq1$. These are conditionally positive definite functions. It was shown in~\cite{Micchelli86} that by adding a polynomial basis of a degree corresponding to the order of conditional positive definiteness and constraining the RBF coefficients $\underline{c}$ to be orthogonal to this basis, we can guarantee strict positive definiteness of the quadratic form $\underline{c}^TA\underline{c}$, which is important when proving optimality results. The RBF approximation 
then takes the form
\begin{equation}
u_h(x)=\sum_{j=1}^nc_j\phi_j(x) + \sum_{j=1}^m\new{\mu}_jp_j(x),\quad \sum_{j=1}^nc_jp_k(x_j)=0.
\label{eq:PHSpoly}
\end{equation}
where the second equation is the constraint.
The dimension $m$ of the polynomial space is given by the degree $p$ of the polynomial as $m=\binom{p+d}{d}$, where $d$ is the number of spatial dimensions. In the Ph.D thesis~\cite{Barnett15}, and the subsequent papers~\cite{FBW16,FFBB16,BFFB17,BFF19}, it was shown that it is beneficial to append a polynomial of a higher degree $p$ than strictly required. First, the convergence order of the method depends on $p$~\cite{Bayona19}. Secondly, the behavior near boundaries is improved compared with classical polynomial-based FD~\cite{BFF19}. It was suggested in~\cite{FFBB16} that for a two-dimensional problem, using a stencil size $n=2m$, leads to a robust method. We use this strategy in this paper.


The interpolation relation corresponding to~\eqref{eq:int1} for the polynomially augmented case becomes
\begin{equation}
	\label{eq:M}
	\underbrace{\begin{pmatrix}
		A & P \\
		P^T & 0
	\end{pmatrix}}_{\tilde{A}}
	\begin{pmatrix}
		\underline{c}\\
		\underline{\new{\mu}}
        \end{pmatrix}
	=	
        \begin{pmatrix}
          \underline{u}\\
		0
	\end{pmatrix}
,
\end{equation}
where $P_{ij}=p_j(x_i)$, and $\underline{\new{\mu}}=(\new{\mu}_1,\ldots,\new{\mu}_m)^T$. Similarly to~\eqref{eq:w1}, using~\eqref{eq:M} for the coefficient vectors, we get the differentiation relation
\begin{eqnarray}
\mathcal{L}u_h(x) &=&
\underbrace{
  \begin{pmatrix}
a^\mathcal{L} & p^\mathcal{L} 
\end{pmatrix}}_{b^\mathcal{L}}
\begin{pmatrix}
  	\underline{c}\\
	\underline{\new{\mu}}
\end{pmatrix}
\nonumber \\
&=&b^\mathcal{L}\tilde{A}^{-1}
\begin{pmatrix}
  \underline{u}\\
  0
\end{pmatrix}=(b^\mathcal{L}\tilde{A}^{-1})_{1:n}\underline{u} \equiv(\mathcal{L}\psi_1(x)\cdots\mathcal{L}\psi_n(x))\underline{u}\equiv w^\mathcal{L}\underline{u},
\label{eq:w2}
\end{eqnarray}  
where $p^\mathcal{L}=(\mathcal{L}p_1(x),\ldots,\mathcal{L}p_m(x))$. 
The PHS + polynomial RBF-FD method works well, but there is some sensitivity to the node layout, e.g., $P$ can become rank deficient for Cartesian node layouts. 
Several authors have developed algorithms for high quality scattered node generation~\cite{FoFly15,ShaKiFo18,SlaKo18,vdSFo19}. Another issue that we have encountered, and that was also noted in~\cite{LiKeChu06} is that errors become large at boundaries with Neumann boundary conditions. 


%
%
In this paper, we propose to improve the performance of the PHS + polynomial RBF-FD method by introducing least squares approximation (oversampling) at the PDE level. The least squares approach is also applicable to RBF-FD with other types of basis functions. A related study is~\cite{LaShcher17}, where least squares approximation is introduced in an RBF partition of unity method (RBF-PUM). It was shown that least squares RBF-PUM is numerically stable under patch refinement, which is not the case for collocation RBF-PUM. In~\cite{Schaback16} it is shown under quite general conditions that given enough oversampling, a broad class of discretizations is uniformly stable. 

A recent paper~\cite{Davydov19} analyses a least squares RBF-FD method formulated over a closed manifold. The formulation of the method is different from ours in that node points and evaluation points are the same; the oversampling is determined by the stencil size, and the theoretical analysis is performed using other strategies. \new{Another recent paper is~\cite{Mirzaei20}, where RBF-FD and RBF-PUM is combined to construct a method that is related, but uses a different approximation strategy.}
Least squares approximation has been used together with RBF-FD by other authors to address some specific problems. In~\cite{LiKeChu06}, an over-determined linear system is formed by enforcing both the PDE and the Dirichlet boundary conditions on the boundary, to improve the stability of the method. In~\cite{PeLiPiRu19} the context is the closest point method applied to a problem with a moving boundary in combination with RBF-FD. Enforcement of both the PDE and the constant-along-a-normal property of the closest point solution leads to an over-determined system and a robust method. 

The main contributions of this paper are
\begin{itemize}
%
\item The RBF-FD-LS algorithm that performs better than collocation-based RBF-FD in terms of efficiency and stability for the tested PDE problems.
\item Error estimates that have been validated numerically for RBF-FD-LS approximations when using the PHS + polynomial basis.
\item A better understanding of the properties of RBF-FD approximations in terms of a piecewise continuous trial space. 
\end{itemize}
The outline of this paper is as follows: In \cref{sec:model}, we define a Poisson problem with Dirichlet and Neumann boundary conditions. Then in \cref{sec:method}, we derive the RBF-FD-LS method. \Cref{sec:trial} focuses on the properties of the RBF-FD trial space, and then convergence and error estimates are derived in \cref{sec:theor}. Numerical experiments that validate the theoretical results are shown in \cref{sec:numericalstudy}. The paper ends with final remarks on the method and results in \cref{sec:finalremarks}.

\section{The model problem}\label{sec:model}
We build our understanding on a model problem, the Poisson equation with Dirichlet and Neumann boundary data:
\begin{equation}
  \label{eq:methods:Poisson}
  \begin{array}{rcrclr@{\,}c@{\,}l}
    \mathcal{L}_2u(y)&\equiv&\Delta u(y) &=& f_2(y), &y &\in& \Omega,\\
    \mathcal{L}_0u(y)&\equiv& u(y) &=& f_0(y), & y &\in& \partial\Omega_0, \\
    \mathcal{L}_1u(y)&\equiv&\nabla u(y) \cdot n &=& f_1(y),& y &\in& \partial\Omega_1.
   \end{array}     
\end{equation}
\new{We also use the notation $\Omega_i$ for the domain associated with $\mathcal{L}_i$.}
When working with the PDE problem, it is practical to have a unified formulation. We reformulate the system above as 
\begin{equation}
  D(y) u(y) = F(y),
  \label{eq:pde}
\end{equation}
where the specific operator $D(y)=\LL_i$ and right-hand-side function $F(y)=f_i(y)$ depend on the location of $y$.


The regularity of the problem depends on the geometry of the domain $\Omega$ in combination with the given right-hand-side functions.
In the problems that we solve in this paper, the domain is either smooth or convex, and the data is chosen such that the solution has bounded and continuous second derivatives. This ensures that the PDE problem~\eqref{eq:methods:Poisson} is well-defined pointwise.
In order to achieve high-order convergence, we require the solution to have additional smoothness.  We define the $L_2$-norm over a domain $\Omega$ as $\|u\|^2_{L_2(\Omega)}=\int_\Omega u(y)^2\,dy$, and use the notation $\|u\|_{L_2(\Omega)}=\|u\|_\Omega$ for brevity. We require $u\in W_\infty^{p+1}(\Omega)\subset W_2^{p+1}(\Omega) =\{u\,|\,\|D^\alpha u\|_{L_2(\Omega)}<\infty,\ |\alpha|\leq p+1\}$, where $p\geq 2$ is the degree of the polynomial basis added to the PHS approximation~\eqref{eq:PHSpoly} that we use in the numerical method.

Since we solve the discretized problem in the least squares sense, it is convenient for the theoretical results derived in \cref{sec:theor} to state also the continuous problem in least squares form. We require $\tilde{u}\in \new{V\subset W_2^2(\Omega)}$ for the least squares solution, \new{where the subspace $V$ is determined by the selected representation of the solution. Since we only use the continuous problem at a conceptual level, we are not specifying the subspace further.} 
%
%
%
%
%
%
The squared $L_2$-norm of the residual of the PDE problem for a function $v\in V$ is given by
\begin{eqnarray}
  \|r(v)\|_{L_2(\Omega)}^2&=&\int_{\partial\Omega_0}(\LL_0v(y)-f_0(y))^2+\int_{\partial\Omega_1}(L_1v(y)-f_1(y))^2+\int_\Omega(\LL_2v(y)-f_2(y))^2\nonumber\\
 &=& \int_{\partial\Omega_0}\left(\LL_0(v-u)\right)^2\,dy+\int_{\partial\Omega_1}\left(\LL_1(v-u)\right)^2\,dy+\int_\Omega\left(\LL_2(v-u)\right)^2\,dy,
\label{eq:contres}
\end{eqnarray}
%
where $f_i=L_iu$ was used in the second equality. If we introduce the bilinear form
\begin{equation}
  a(u,v) = \int_{\partial\Omega_0}uv\,dy+\int_{\partial\Omega_1}\frac{\partial u}{\partial n}\frac{\partial v}{\partial n}\,dy+\int_\Omega\Delta u\Delta v\,dy,
  \label{eq:bilinearcont}
\end{equation}
and note that $\|r(v)\|_{L_2(\Omega)}^2=a(v-u,v-u)$, the least squares solution of~\eqref{eq:methods:Poisson} is:
\begin{equation}
  \tilde{u}=
  \argmin_{v\in V}a(v-u,v-u).
  \label{eq:lscont}
\end{equation}
Alternatively, using that the residual is $a$-orthogonal to $V$, we can write
\begin{equation}
a(\tilde{u}-u,v)=0,\quad \forall v\in V.
\end{equation}
%
%
When $u\in V$, the least squares problem solves the PDE problem exactly, but in general for a numerical approximation, $u$ and $\tilde{u}$ reside in different subspaces, leading to a non-zero residual.

\section{Formulation of RBF-FD-LS in practice}\label{sec:method}

We start with generating a node set $X = \{x_k\}_{k=1}^N$ that covers the domain $\Omega$, on which we solve the PDE problem~\eqref{eq:pde}. It is beneficial for the approximation quality if the node distance is nearly uniform or varies smoothly over the domain. We associate each $x_k$ with a stencil and denote the $n$ points (including $x_k$) in the local neighborhood of $x_k$ that contribute to the stencil by $X_k$.
%
An example of a global node set and a stencil is given in the left part of Figure~\ref{fig:methods:rbf-fd:domainStencil}.
\begin{figure}
	\centering
        \includegraphics[height=0.27\linewidth]{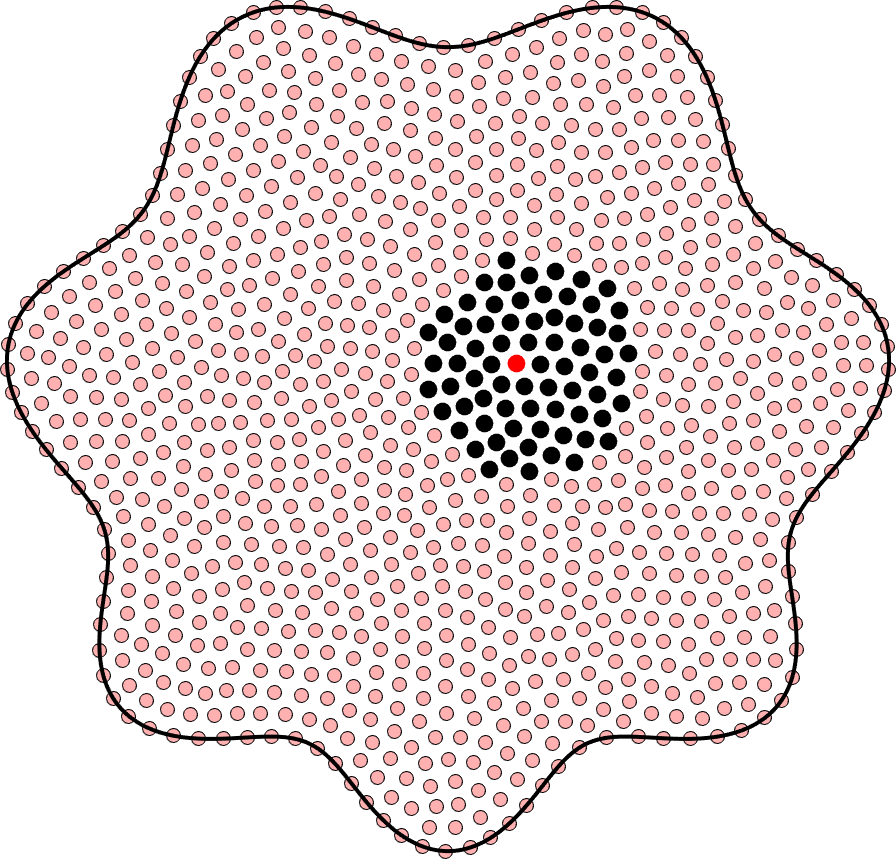}
        \hspace*{5mm}
        \includegraphics[height=0.27\linewidth]{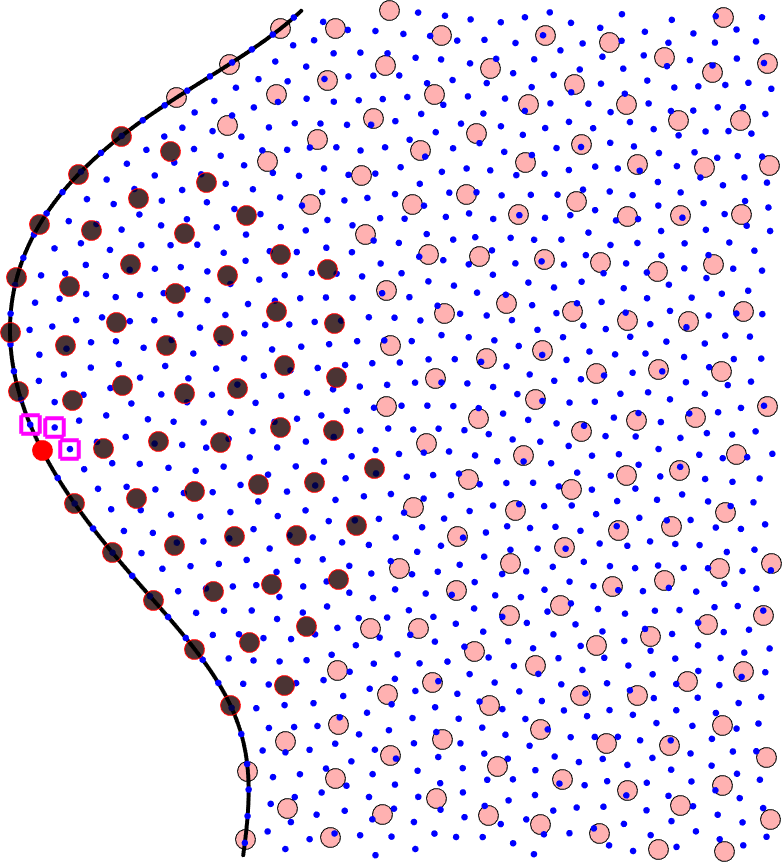}
	\caption{The black curve represents the domain boundary $\partial \Omega$. The pale-red dots distributed over $\Omega$ are the points in the node set $X$. The black dots are in each case the node points belonging to the stencil centered at the red point. The evaluation points in the node set $Y$ are shown in the right subfigure as blue dots. It is also indicated with square markers which evaluation points select this particular stencil for evaluating $u_h$.}	\label{fig:methods:rbf-fd:domainStencil}
\end{figure}

To evaluate the RBF-FD approximation at a point $y\in\bar{\Omega}$, we need a stencil selection method. In our algorithm we choose the stencil associated with the point $x_k$ that is closest to $y$. That is,
\begin{equation}
  k(y) = \argmin_{i}\|y-x_i\|. 
\end{equation}
A practical issue is that there are always points that are equally close to two or more stencils. Therefore we also need to break the tie, such that each $y$ is uniquely associated with one stencil. We then use~\eqref{eq:w2} to write down the global RBF-FD approximation to the solution of the PDE problem evaluated at the point $y$ as
\begin{equation}
u_h(y) =(b_{k}^{\mathcal{L}_0}\tilde{A}_{k}^{-1})_{1:n}u_h(X_k)=\left(\psi_1^k(y),\ldots,\psi_n^k(y)\right)u_h(X_k)=
w_{k}^{\mathcal{L}_0}u_h(X_k)
,
\label{eq:loceval}
\end{equation}
where the subscript or superscript $k=k(y)$ indicates quantities computed in the stencil associated with $x_k$, and where $u_h(X_k)$ is a column vector with $u_h$ evaluated at the local node set. The expression for the action of a differential operator $D(y)$ on the global RBF approximation $u_h$, evaluated at the point $y$, follows from~\eqref{eq:loceval}:
\begin{equation}
  D(y)u_h(y) =
  \left(D(y)\psi_1^k(y),\ldots, D(y)\psi_n^k(y)\right)u_h(X_k)=
w_{k}^{D(y)}u_h(X_k).
\label{eq:locdiff}
\end{equation}
We note that the local matrices $\tilde{A}_k$ can be reused for all points $y$ that select the same stencil, and for all operators.

To solve the PDE problem~\eqref{eq:pde}, we sample the approximation~\eqref{eq:locdiff} and the PDE data $F(y)$ at a global node set $Y=\{y_i\}_{i=1}^M$ that has to contain nodes in $\Omega$, at $\partial\Omega_0$, and at $\partial\Omega_1$. An example of an evaluation node set $Y$ is shown in the right part of Figure~\ref{fig:methods:rbf-fd:domainStencil}. We construct a sparse global linear system
\begin{equation}
  D_h(Y,X)u_h(X) = F(Y),
  \label{eq:discretePDE}
\end{equation}  
where row $i$ contains the equation for $D(y_i)u_h(y_i)=F(y_i)$ and the corresponding weights from~\eqref{eq:locdiff} are entered into the columns corresponding to the global indices of the nodes in $X_k$. In the same way, we form the relation
\begin{equation}
  u_h(Y)=E_h(Y,X)u_h(X),
  \label{eq:discreteEval}
\end{equation} 
using weights from~\eqref{eq:loceval}. If the number of evaluation points $M>N$, both $D_h(Y,X)$ and $E_h(Y,X)$ are rectangular $M\times N$ matrices. 
In \cite{Tominec_rbffdcode2021} we provide MATLAB code to generate rectangular RBF-FD matrices such as $E_h(Y,X)$ or $D_h(Y,X)$.

In the discretized PDE problem, $u_h(X)$ is the vector of unknowns, and we formally write the least squares solution of the linear system as
\begin{equation}
  u_h(X) = D_h^+(Y,X)\,F(Y),
  \label{eq:ls}
\end{equation}
where the $N\times M$ matrix $D_h^+ (Y,X)=(D_h^TD_h)^{-1}D_h^T$ is the pseudo inverse of $D_h(Y,X)$. To evaluate the solution at $Y$ we add the step~\eqref{eq:discreteEval} to get
\begin{equation}
  u_h(Y) = E_h(Y,X)\,D_h^+(Y,X)\,F(Y).
  \label{eq:stabnorm}
\end{equation}
The over-determined linear system~\eqref{eq:discretePDE} can also be formulated as a discrete residual minimization problem. We define the residual
\begin{equation}
  r(Y) =  D_h(Y,X)u_h(X) - F(Y).
  \label{eq:residual}
\end{equation}  
Then the solution~\eqref{eq:ls} minimizes $\|r(Y)\|_2^2$, and it also holds that
\begin{equation}
  D_h^T\new{(Y,X)}r(Y) = 0,
\end{equation}
due to the orthogonality property of the least squares residual.

The collocation RBF-FD method, where $Y=X$, is a special case of the derivation above, where the stencil selected for $y_k=x_k$ is always $k$, $D_h^+(X,X)=D_h^{-1}(X,X)$, and $E_h(X,X)=I_h$. This leads to
\begin{equation}
  r(X) = D_h(X,X) u_h(X) - F(X) = D_h(X,X) (D_h^{-1}(X,X) F(X)) - F(X) = \new{0}.
\end{equation}

In the discrete minimization of the residual~\eqref{eq:residual}, each equation has the same weight. This may cause problems with convergence to the PDE solution under node refinement.
We start by introducing a weighted discrete $\ell_2$-norm that corresponds to the continuous $L_2$-norm with the integral replaced by a discrete quadrature formula. The error in this approximation is further discussed in \cref{sec:discretenorm}.
We leave place holders $\beta_i$ for additional balancing of the different parts of the residual, and discuss these further in \cref{sec:errors:scaling}. Let a domain $\Omega$ be discretized by $M$ points $y_i$, $i=1,\ldots,M$. Then
\begin{equation}
  (u,v)_{\ell_2(\Omega)}=\frac{|\Omega|}{M}\sum_{i=1}^{M}u(y_i)v(y_i), \quad \|u\|^2_{\ell_2(\Omega)}=(u,u)_{\ell_2(\Omega)},
\end{equation}
where $|\Omega|=\int_\Omega 1\,dy$.
We denote the number of evaluation points that discretize the operator $\mathcal{L}_i$ in~\eqref{eq:methods:Poisson} by $M_i$ and note that if the evaluation points are quasi uniform with node distance $h_y$, then \new{for $d>1$}
\new{
\begin{equation}
  h_y=c_0\left(\frac{|\partial\Omega_0|}{M_0}\right)^\frac{1}{d-1} =c_1\left(\frac{|\partial\Omega_1|}{M_1}\right)^\frac{1}{d-1}=c_2
  \left(\frac{|\Omega|}{M_2}\right)^\frac{1}{d},
\end{equation}
}
where $c_0\approx c_1\approx c_2 \approx 1$. 
Scaling the evaluation matrix as $\bar{E}_h=\new{\left(\frac{|\Omega|}{M}\right)^\frac{1}{2}}E_h$ leads to 
\begin{eqnarray}
  \|u_h\|_{\ell_2(\Omega)}^2 &=&\frac{|\Omega|}{M}\|u_h(Y)\|_2^2= \|\bar{E}_hu_h(X)\|_2^2 = u_h(X)^T\bar{E}_h^T\bar{E}_hu_h(X).
\end{eqnarray}
For $D_h$, we scale according to the location of $y$, such that $\bar{D}_h=\mathrm{diag}(\beta(Y))D_h$, where
\new{\begin{equation}
  \label{eq:method:scaling}
  \beta(y)=\left\{
    \begin{array}{rcrl}
      \left(\frac{|\partial\Omega_0|}{M_0}\right)^\frac{1}{2}\beta_0&\approx&h_y^\frac{d-1}{2}\beta_0, &y\in\partial\Omega_0,\\
      \left(\frac{|\partial\Omega_1|}{M_1}\right)^\frac{1}{2}\beta_1&\approx&h_y^\frac{d-1}{2}\beta_1,&y\in\partial\Omega_1,\\
      \left(\frac{|\Omega|}{M_2}\right)^\frac{1}{2}\beta_2&\approx& h_y^\frac{d}{2}\beta_2, &y\in\Omega,
    \end{array}
    \right.
\end{equation}}%
and, similarly, we let $\bar{F}(y)=\beta(y)F(y)$. For the scaled residual $\bar{r}(Y)$, noting that $D_h(Y,X)u_h(X)=D(Y)u_h(Y)$ and $F(Y)=D(Y)u(Y)$, we get
\begin{eqnarray}
  \|\bar{r}(Y)\|_2^2&=&\|\bar{D}_h(Y,X)u_h(X) - \bar{F}(Y)\|_2^2 = \|\beta(Y)D(Y)(u_h(Y)-u(Y))\|_2^2\nonumber\\
  &=& \beta_0^2\|\mathcal{L}_0(u_h-u)\|^2_{\ell_2(\partial\Omega_0)} +   \beta_1^2\|\mathcal{L}_1(u_h-u)\|^2_{\ell_2(\partial\Omega_1)} +  \beta_2^ 2\|\mathcal{L}_2(u_h-u)\|^2_{\ell_2(\Omega)}.
\end{eqnarray}
Comparing with the residual of the continuous problem~\eqref{eq:contres} and the continuous bilinear form~\eqref{eq:bilinearcont}, we introduce the discrete bilinear form
\begin{equation}
  a_h(u,v) = \beta_0^2\left(u,v\right)_{\ell_2(\partial\Omega_0)} + \beta_1^2\left(\frac{\partial u}{\partial n},\frac{\partial v}{\partial n}\right)_{\ell_2(\partial\Omega_1)} + \beta_2^2\left(\Delta u,\Delta v\right)_{\ell_2(\Omega)}.
  \label{eq:ah}
\end{equation}
So far, we have assumed that the Dirichlet boundary conditions are enforced in the least squares sense. It has been noted, e.g, in~\cite{PlaDri06} that when Dirichlet conditions are imposed strongly, the overall accuracy is improved. Assuming that there are node points $X_{\partial\Omega_0}\subset X$ that discretize the Dirichlet boundary, we let $\tilde{X}=X\setminus X_{\partial\Omega_0}$ and $u_h(X_{\partial\Omega_0})=0$. Then we rewrite the discretized least squares PDE problem~\eqref{eq:discretePDE} as:
\begin{equation}
    \bar{D}_h(Y,\tilde{X})u_h(\tilde{X}) = \bar{F}(Y)-\bar{D}_h(Y,X_{\partial\Omega_0})u_h^0(X_{\partial\Omega_0})\equiv\tilde{F}(Y),
\label{eq:nonhomres}  
\end{equation}  
where $u_h^0(X_{\partial\Omega_0})=f_0(X_{\partial\Omega_0})$ is a subset of the Dirichlet boundary data.
Note that $u_h$ is in general non-zero at the Dirichlet boundary between the data points. 
\new{We denote the trial space containing all functions of the form~\cref{eq:loceval} by $V_h$ and we denote the subspace with zero Dirichlet data by $V_h^0$.} The solution to the original problem is given by $u_h+u_h^0\in V_h$, where $u_h\in V_h^0$. 
%
Similarly to~\eqref{eq:lscont}, we write the least squares problem on the form
\begin{equation}
  u_h = \argmin_{v_h\in V_h^0}a_h(v_h+u_h^0-u,v_h+u_h^0-u),
  \label{eq:ah1}
\end{equation}
where $V_h$ is the RBF-FD trial space. We have the orthogonality property
\begin{equation}
  a_h(u_h+u_h^0-u,v_h) = 0,\quad \forall v_h\in V_h^0.
 \label{eq:ah2}
\end{equation}  
To see how this relates to the matrix-based description of the discrete least squares problem, we introduce a (non-orthogonal) basis for $V_h$. For each evaluation point $y$ there is a unique representation of $u_h$ in terms of the local cardinal functions~\eqref{eq:loceval}. We define global cardinal functions as
\begin{equation}
  \Psi_j(y) = \left\{\begin{array}{ll}
    \psi_{i}^{k}(y), &x_j\in X_k\\
    0,  & x_j\not\in X_k,
  \end{array}\right .
\end{equation}
where $k=k(y)$ is the stencil selected for the evaluation point $y$, and $i(j)$ is the local index $i$ in $X_k$ of $x_j\in X$. We represent a non-homogeneous function $u_h\in V_h$ as
\begin{equation}
  u_h(y) = \sum_{j=1}^Nu_h(x_j)\Psi_j(x).
  \label{eq:uhpsi}
\end{equation}
We note that $D_h(y_i,x_j)=D(y_i)\Psi_j(y_i)$ and $E_h(y_i,x_j)=\Psi_j(y_i)$. If we insert~\eqref{eq:uhpsi} in~\eqref{eq:ah2} and let $v_h=\Psi_i$, we get
\begin{equation}
  \sum_{j=1}^N a_h(\Psi_j,\Psi_i) u_h(x_j) = a_h(u,\Psi_i),
\end{equation}  
where $a_h(\Psi_j,\Psi_i)$ is an element of the matrix $\bar{D}_h(Y,X)^T\bar{D}_h(Y,X)$, and $a_h(u,\Psi_i)$ is an element of the right-hand-side vector $\bar{D}_h(Y,X)^T\bar{F}(Y)$ in the weighted normal equations. The specific properties of the trial space and the cardinal basis functions are further discussed in the following section. 
\section{The discontinuous trial space}\label{sec:trial}
The trial space $V_h$ is a piecewise space. The stencil selection algorithm that we use for the evaluation points results in the domain being divided into Voronoi regions $\mathcal{V}_k$ around each stencil center point $x_k\in X$. For an illustration of the Voronoi regions in two dimensions, see~\cref{fig:experiments:Poisson:exactsolution}. Locally we have $u_h\in W_\infty^2(\mathcal{V}_k)\subset W_2^2(\mathcal{V}_k)$ due to the smoothness of the at least cubic PHS basis.
\begin{theorem}
  Assume that for each stencil underlying the trial space approximation, the node set $X_k$ is unisolvent with respect to polynomials of degree $p$. Then $u_h|_{\mathcal{V}_k}=0$ if and only if $u_h(X_k)=0$, and $u_h|_{\Omega}=0$ if and only if $u_h(X)=0$.
  \label{theor:v0}
\end{theorem}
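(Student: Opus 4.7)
The plan is to first establish the local statement $u_h|_{\mathcal{V}_k}=0 \Leftrightarrow u_h(X_k)=0$ and then bootstrap to the global claim via $\Omega = \bigcup_k \mathcal{V}_k$ and $X=\bigcup_k X_k$. Direction $(\Leftarrow)$ is essentially the interpolation system~\eqref{eq:M}: unisolvence of $X_k$ with respect to polynomials of degree $p$ is exactly what makes the polynomial block $P$ have full column rank and, together with the conditional positive definiteness of the PHS, makes $\tilde{A}_k$ invertible. A zero right-hand side thus forces $(\underline{c},\underline{\mu})=0$, so the local PHS+polynomial function vanishes identically on $\mathbb{R}^d$, and in particular on $\mathcal{V}_k$.

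Direction $(\Rightarrow)$ is the subtle one because the stencil $X_k$ is not in general contained in $\mathcal{V}_k$, so vanishing on the Voronoi cell does not \emph{directly} pin down the values at the stencil nodes. The key observation is that the local representation
\[
f(x)=\sum_{j=1}^n c_j\,\phi(\|x-x_j\|) + \sum_{j=1}^m \mu_j p_j(x)
\]
is defined on all of $\mathbb{R}^d$, and because $\phi(r)=r^{2\ell-1}$ composed with the smooth map $x\mapsto\|x-x_j\|$ is real-analytic whenever $x\neq x_j$, the function $f$ is real-analytic on the open set $\mathbb{R}^d\setminus X_k$. For $d\geq 2$ this set is connected, so the identity theorem for real-analytic functions lets me propagate the vanishing of $f$ on the non-empty open subset $\mathcal{V}_k\setminus X_k$ to all of $\mathbb{R}^d\setminus X_k$. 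Continuity of PHS at the origin (cubic or higher is at least $C^2$) then extends this to $f\equiv 0$ on $\mathbb{R}^d$, so evaluating at each $x_j\in X_k$ yields $u_h(x_j)=f(x_j)=0$, i.e., $u_h(X_k)=0$.

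For the global statement, $\Omega=\bigcup_k\mathcal{V}_k$ up to a set of measure zero, so $u_h|_\Omega=0$ is equivalent to $u_h|_{\mathcal{V}_k}=0$ for every $k$. Since every $x_k\in X$ lies in its own stencil $X_k$, we have $X=\bigcup_k X_k$, so $u_h(X)=0$ is equivalent to $u_h(X_k)=0$ for every $k$. Applying the per-stencil equivalence then closes the argument. The main obstacle is the analyticity step in direction $(\Rightarrow)$: one must identify real-analyticity of PHS away from its center and verify that $\mathbb{R}^d\setminus X_k$ is connected — automatic for $d\geq 2$, but it would need separate treatment (patching across the nodes using smoothness of $\phi$ there) in one dimension, which is outside the scope of the present RBF-FD application.
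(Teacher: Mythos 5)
Your proof is correct, and it is substantially more complete than the one in the paper, which consists of the single sentence ``the results follow from the uniqueness of the local interpolation problems.'' That appeal to uniqueness really only covers your $(\Leftarrow)$ direction: unisolvence of $X_k$ plus conditional positive definiteness of the PHS make $\tilde{A}_k$ invertible, so zero nodal data forces the zero local function. You have correctly identified that the $(\Rightarrow)$ direction of the \emph{local} claim is not a uniqueness statement at all but an injectivity statement for the restriction map $v_h^k \mapsto v_h^k|_{\mathcal{V}_k}$, and that it is genuinely nontrivial precisely because the stencil $X_k$ is not contained in $\mathcal{V}_k$ --- so vanishing on the Voronoi cell only directly pins down $u_h(x_k)$, not the values at the neighboring stencil nodes. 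Your resolution via real-analyticity of $\sum_j c_j\|x-x_j\|^{2\ell-1}+\sum_j\mu_j p_j(x)$ on the connected set $\mathbb{R}^d\setminus X_k$ ($d\geq 2$), the identity theorem, and continuity at the centers is the right tool and closes the gap cleanly; once $f\equiv 0$ on $\mathbb{R}^d$ you read off $u_h(X_k)=f(X_k)=0$. Your caveat about $d=1$ (where $\mathbb{R}\setminus X_k$ is disconnected and one would instead patch polynomial pieces across nodes using the $C^{2\ell-2}$ smoothness of the spline) is accurate and appropriately flagged as outside the paper's setting. The global statement then follows exactly as you say, with the forward global direction being even easier (evaluate at each node in its own cell). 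In short: same skeleton as the paper for the easy direction, but your analyticity argument supplies the missing justification for the hard direction that the paper's proof glosses over.
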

\begin{proof}
  The results follow from the uniqueness of the local interpolation problems.
\end{proof}  

%
A scattered node set is quantified by its fill distance $h$, measuring the radius of the largest ball empty of nodes in $\Omega$, and its separation distance $q$, defined by:
\begin{equation}
  h= \sup_{x\in\Omega}\min_{x_j\in X}\|x-x_j\|_2\geq q= \frac{1}{2}\min_{\stackrel{j\neq k}{x_j,x_k\in X}}\|x_j-x_k\|_2.
\end{equation}
\new{The quality of a node set is related to $c_q=q/h< 1$. The trial space approximation improves with increasing node quality. }
 \new{In the following subsections, we derive the results that we need for the error estimates in~\cref{sec:theor}, in terms of the fill distance $h$ of $X$, the fill distance $h_y$ of $Y$, and the node quality $c_q$.}
\subsection{\new{Interpolation errors}}
We define the interpolant $I_h(u)\in V_h$ of a function
$u$ as $I_h(u) = \sum_{j=1}^Nu(x_j)\Psi_j$. \nnew{For a function $u\in W_\infty^{p+1}(\Omega)$ that allows Taylor series expansion around $x_k\in\mathcal{V}_k$, we can assess the local interpolation error $e_I=I_h(u)-u$, and its derivatives using a result from~\cite{Bayona19}. When $h$ is small enough, we have that}
\begin{equation}
  |\mathcal{L}_i\left(I_h(u(y))-u(y)\right)|\leq \alpha_{k,i}h^{p+1-i}|u|_{W_\infty^{p+1}(\mathcal{V}_k)},\quad y\in \mathcal{V}_k,
  \label{eq:errors:interpolationError}
\end{equation}  
where $\mathcal{L}_i$ is a differential operator of order $i$, $|u|_{W_\infty^{q}(\Omega)}=\sum_{|\alpha|=q} \|D^\alpha u\|_{L_\infty(\Omega)}$, and $\alpha_{k,i}$ are constants that depend on the degree $p$ of the polynomial basis, and on the node quality $c_q$ of the stencil node set $X_k$. If the node layout is non-uniform, indicated by a small value of $c_q$, the interpolation problem has a large Lebesgue constant~\cite{Shankar17}, and consequently a larger interpolation error. The error is also larger for skewed stencils that are evaluated close to their support boundary.

When we use the interpolation error in the global error estimate, we take a norm over the domain. If we let $\alpha_i=\nnew{|\Omega|}\max_k\alpha_{k,i}$, we have
\begin{equation}
  \|\mathcal{L}_i(\new{I_h(u)}-u)\|_{\ell_2(\Omega)}\leq \alpha_ih^{p+1-i}|u|_{W_\infty^{p+1}(\Omega)}.
  \label{eq:eint}
\end{equation}

%
At the edge of a Voronoi region $I_h(u)$ takes slightly different values from each side. 
\new{That is, if $u$ is not represented exactly in $V_h$, the interpolant $I_h(u)$}  has a discontinuity proportional to $h^{p+1}$, that goes to zero as the space is refined, along the edges of the Voronoi regions. This means that the cardinal basis functions also have discontinuities between Voronoi regions, see~\cref{fig:method:cardinal_1}. 
\begin{figure}[htb!]
	\centering
	\begin{tabular}{cc}
	\includegraphics[width=0.35\linewidth]{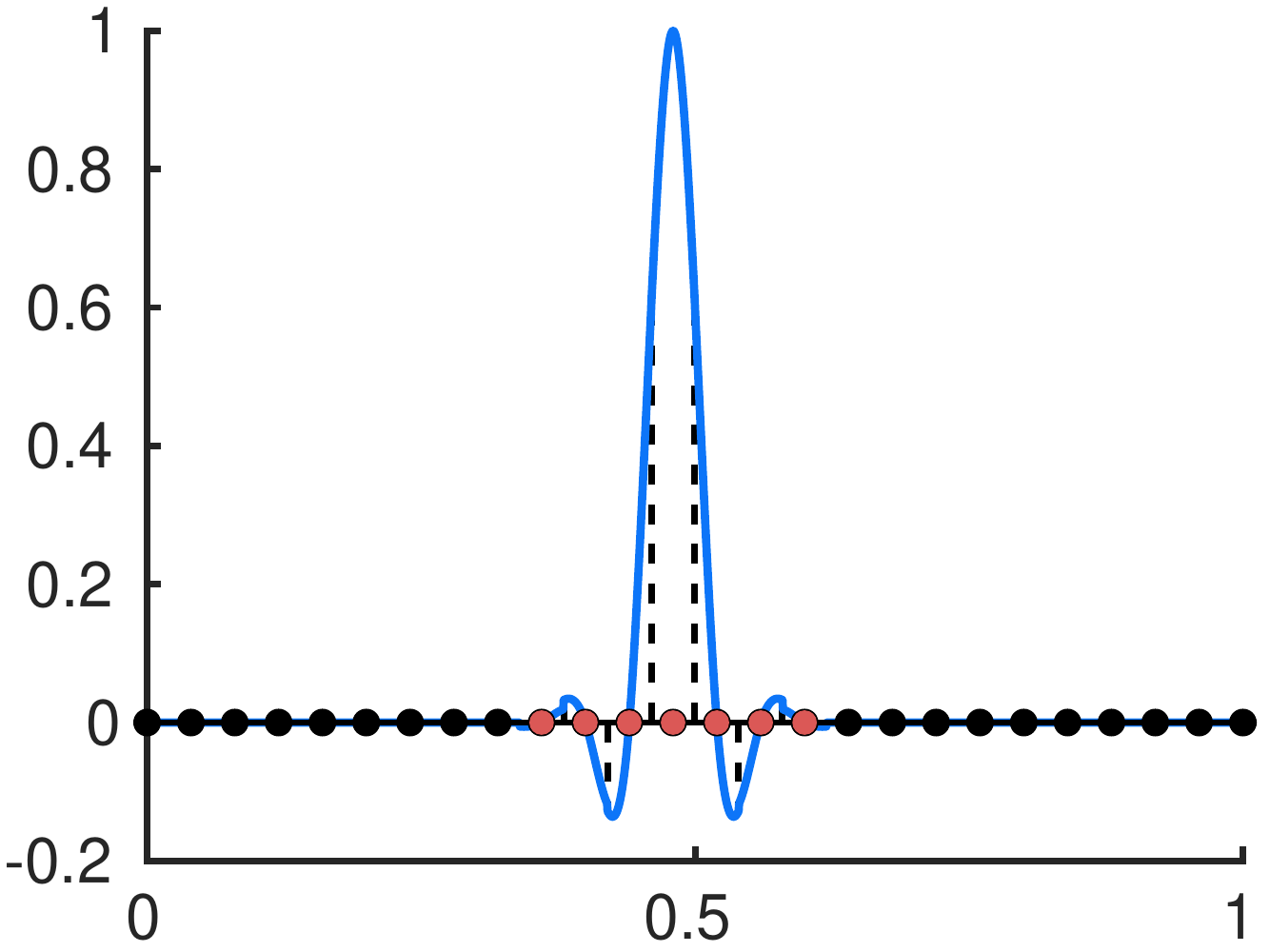}
	\includegraphics[width=0.35\linewidth]{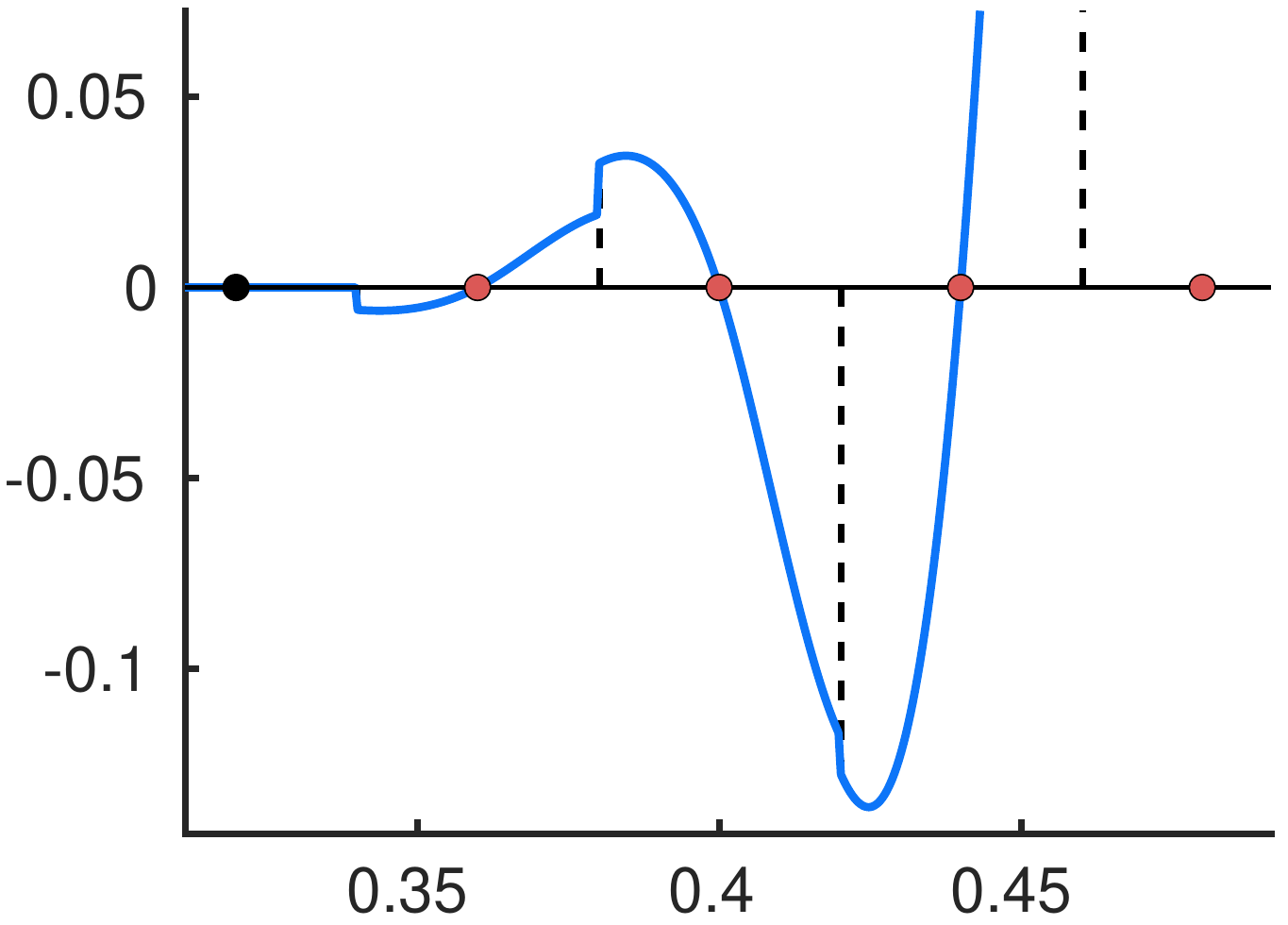}

	\end{tabular}
	\caption{Left: A cardinal function generated with RBF-FD on a uniform node set in one dimension with stencil size $n=7$. 
	Right: A close up to illustrate the discontinuities at the intersections of the Voronoi regions.}
	\label{fig:method:cardinal_1}
\end{figure}

\subsection{\new{Derivatives and norms in the trial space}} We want to reuse the results that we derive here also for the smoothed trial space defined in~\cref{sec:smooth}. Therefore, we define the local stencil-based functions $v_h^k(y)$ that together form $v_h$ (cf.~\cref{eq:loceval}) over the domains $\mathcal{V}_k^\delta\supset \mathcal{V}_k$, which represent an extension of the Voronoi regions $\mathcal{V}_k$ with at most distance $\delta h/2$ in any direction, where $\delta<1$. We have 
\begin{equation}
  v_h^k(y)= \sum_{i=1}^n\psi_i^k(y)v_h(x^k_i), \quad y\in \mathcal{V}_k^\delta,
  \label{eq:locv}
\end{equation}
where $x_i^k$ is the $i$th element of $X_k$. When discussing bounds, we also use the following explicit form derived in~\cite{Bayona19}:
\begin{equation}
  v_h^k(y)= v_h(X_k)^T(I-WP)A^{-1}\phi(y)+v_h(X_k)^T Wp(y),
  \label{eq:Bayona}
\end{equation}
where $W=A^{-1}P(P^T A^{-1} P)^{-1}$, the PHS vector $\phi(y)=(\phi^k_1(y),\ldots,\phi^k_n(y))^T$, and the polynomial vector $p(y)=(p_1(y),\ldots,p_m(y))^T$.
%
%
\begin{theorem}
  \label{theor:Dv}
  Assume that we use cubic splines, that~\cref{theor:v0} holds and that the node quality $c_q$ of $X$ has a lower bound $c_{\min}$, within each stencil node set, for a sequence of discretizations with different fill distances $h$. We define $\tilde{y}=y/h$ and $\tilde{v}_h^k=\sum_{i=1}^n\tilde{\psi}_i^k(\tilde{y})v_h(x^k_i)$, where $ \tilde{\psi}_i^k(\tilde{y}) =\psi_i^k(y)$, we define a scaled Voronoi region $\tilde{\mathcal{V}}_k$ such that $\tilde{y}\in \tilde{\mathcal{V}}_k$ when $y\in \mathcal{V}_k$. Let $\mathcal{V}_{k,i}=\mathcal{V}_k\cap\Omega_i\subset\mathbb{R}^{d_i}$ and let $\tilde{\mathcal{V}}_{k,i}$ be the scaled counterpart. Then
  \begin{eqnarray}
    (D^s_yv_h^k,D^t_yv_h^k)_{L_2(\mathcal{V}_{k,i})}&=&h^{d_i-|s|-|t|}(D^s_{\tilde{y}}\tilde{v}_h^k,D^t_{\tilde{y}}\tilde{v}_h^k)_{L_2(\tilde{\mathcal{V}}_{k,i})}\\
    &=& h^{d_i-|s|-|t|}v_h(X_k)^T\tilde{\Psi}^{s,t}_{k,i} v_h(X_k),\quad |s|,|t|\leq 3\nonumber
  \end{eqnarray}
  where $\tilde{\Psi}^{s,t}_{k,i}(q,r)=(D^s_{\tilde{y}}\tilde{\psi}_q^k,D^t_{\tilde{y}}\tilde{\psi}_r^k)_{L_2(\tilde{\mathcal{V}}_{k,i})}$. Furthermore for $|s|,|t|,|q|\leq 3$,
  \begin{equation}
    \max_{\begin{array}{c}\scalebox{0.8}{$\|v_h(X_k)\|_2>0$}\\\scalebox{0.8}{$\|D^qv_h^k\|_{L_2(\mathcal{V}_k)>0}$}\end{array}} \frac{(D^sv_h^k,D^tv_h^k)_{L_2(\mathcal{V}_{k,i})}}{\|D^qv_h^k\|_{L_2(\mathcal{V}_k)}^2}\leq h^{d_i-d+2q-|s|-|t|}K^{s,t,i}_q,\quad k=1,\ldots,N.
    \label{eq:ratbound}
  \end{equation}  
  where $K_q^{s,t,i}$ is an upper bound for the largest eigenvalue $K_{\max}$ of the generalized eigenproblem $\tilde{\Psi}^{s,t}_{k,i}v=K\tilde{\Psi}^{q,q}_{k,0}v$ in the subspace of eigenvectors $v\in \mathbb{R}^n$ that are in the range of $\tilde{\Psi}^{q,q}_{k,0}$. Apart from $s$, $t$ and $q$, the bound $K_q^{s,t,i}$ only depends on the lower bound on the node quality $c_{\min}$, the dimension $d$, the stencil size $n$, the polynomial degree $p$, and the dimension of the polynomial space $m$.
\end{theorem}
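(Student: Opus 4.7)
The plan is to peel off the $h$-dependence by an explicit rescaling, reduce the ratio on the left to a generalized Rayleigh quotient in $\mathbb{R}^n$, and finally bound that Rayleigh quotient uniformly over admissible stencils by a compactness argument.

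First, for the scaling identity, I would change variables $\tilde{y}=y/h$ and use $\tilde{\psi}_i^k(\tilde{y})=\psi_i^k(y)$ together with the chain rule: derivatives transform as $D_y^s=h^{-|s|}D_{\tilde{y}}^s$, and the $d_i$-dimensional surface measure on $\mathcal{V}_{k,i}$ transforms as $dy=h^{d_i}\,d\tilde{y}$. Plugging these into the inner product collects exactly the prefactor $h^{d_i-|s|-|t|}$ and leaves an integral over $\tilde{\mathcal{V}}_{k,i}$. Since $\tilde{v}_h^k=\sum_i \tilde{\psi}_i^k(\tilde{y})\,v_h(x_i^k)$ depends linearly on the coefficient vector $v_h(X_k)$, expanding both factors and pulling $v_h(X_k)$ outside the integral produces the quadratic form $v_h(X_k)^T\tilde{\Psi}^{s,t}_{k,i}v_h(X_k)$ with the stated entries.

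For the ratio bound, I would apply the scaling identity separately to numerator and denominator: the numerator becomes $h^{d_i-|s|-|t|}v^T\tilde{\Psi}^{s,t}_{k,i}v$ and the denominator $\|D^q v_h^k\|^2_{L_2(\mathcal{V}_k)}$ becomes $h^{d-2|q|}v^T\tilde{\Psi}^{q,q}_{k,0}v$ (taking $d_i=d$ and $\mathcal{V}_{k,i}=\mathcal{V}_k$), where $v=v_h(X_k)$. Division gives the advertised power $h^{d_i-d+2|q|-|s|-|t|}$ multiplied by the generalized Rayleigh quotient $v^T\tilde{\Psi}^{s,t}_{k,i}v / v^T\tilde{\Psi}^{q,q}_{k,0}v$. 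Since $\tilde{\Psi}^{q,q}_{k,0}$ is symmetric positive semidefinite, this quotient is bounded on the orthogonal complement of its null space (equivalently, on its range) by the largest generalized eigenvalue $K_{\max}$ of $\tilde{\Psi}^{s,t}_{k,i}v=K\tilde{\Psi}^{q,q}_{k,0}v$, by a standard min-max argument. The restriction to this subspace is precisely what unisolvency (via \cref{theor:v0}) requires in order to exclude coefficient vectors that would make $D^q v_h^k$ vanish identically on $\mathcal{V}_k$ and render the ratio ill-defined.

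The hard part is to show that the resulting bound $K_q^{s,t,i}$ can be chosen to depend only on $c_{\min}, d, n, p, m$ and the indices $s,t,q,i$, not on the stencil label $k$ or on $h$. All explicit $h$-factors have been stripped, so the matrices $\tilde{\Psi}^{s,t}_{k,i}$ are continuous functions of the rescaled stencil $\tilde{X}_k=X_k/h$ and the rescaled region $\tilde{\mathcal{V}}_{k,i}$, via the closed-form expression \eqref{eq:Bayona}. The admissible rescaled configurations live in a compact set: the node quality hypothesis gives minimum separation $2c_{\min}$ in $\tilde{X}_k$, the stencil (being the $n$ nearest neighbors of its center) has diameter bounded in terms of $c_{\min}, d, n$, and the rescaled Voronoi region of the center lies inside a ball of correspondingly bounded radius. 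Up to rigid motions we may parametrize by a compact set of point configurations in $\mathbb{R}^d$; continuity of generalized eigenvalues on the range of the right-hand matrix then yields a finite supremum, which is the desired uniform constant $K_q^{s,t,i}$.
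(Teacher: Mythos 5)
Your proposal is correct in its overall structure and matches the paper's first two steps exactly: the change of variables $\tilde y=y/h$ with $D_y^s=h^{-|s|}D_{\tilde y}^s$ and $dy=h^{d_i}d\tilde y$ on the $d_i$-dimensional piece, followed by the reduction of the ratio to a generalized Rayleigh quotient for the pair $(\tilde\Psi^{s,t}_{k,i},\tilde\Psi^{q,q}_{k,0})$ restricted to the range of the denominator matrix. Where you genuinely diverge is in the uniformity step. The paper does not argue by compactness: it rewrites the scaled local function in the explicit matrix form~\eqref{eq:Bayona}, observes that $A=h^3A_1$ and $P=P_1H$ so that the resulting expression~\eqref{eq:matform} is scale invariant, and then bounds the numerator \emph{quantitatively} by bounding each factor --- the crucial ingredient being Wendland's Corollary 12.7, which gives an explicit lower bound on the smallest eigenvalue of the scaled PHS matrix $A_1$ in terms of the separation distance, which after rescaling equals the node quality and is therefore $\geq c_{\min}$ uniformly over the sequence of discretizations. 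The denominator is then bounded below as a mass/stiffness matrix of the scaled cardinal functions. Your compactness-plus-continuity argument buys a cleaner conceptual statement and avoids invoking the RBF-specific eigenvalue estimate, at the price of being non-constructive; the paper's route yields (in principle) explicit constants and makes the dependence on $c_{\min},n,m,p$ visible.

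One point in your argument deserves more care before it can be accepted as a complete proof. Continuity of the largest generalized eigenvalue ``on the range of the right-hand matrix'' is not automatic when the right-hand matrix varies: along a sequence of admissible configurations the rank of $\tilde\Psi^{q,q}_{k,0}$ can drop in the limit, or its smallest nonzero eigenvalue can tend to zero (e.g.\ as the Voronoi cell degenerates or the stencil becomes maximally skewed within the admissible class), in which case the supremum over your compact set could be infinite. You would need to show that the rank is constant on the closure of the admissible set and that the smallest nonzero eigenvalue of $\tilde\Psi^{q,q}_{k,0}$ is bounded away from zero there --- which is essentially the content the paper extracts from unisolvency together with the Wendland bound. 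A related delicacy is that $\tilde\Psi^{s,t}_{k,i}$ depends not only on the rescaled stencil but also on the shape of the rescaled Voronoi cell and its intersection with $\partial\Omega_i$, so the map from configuration to matrix whose continuity you invoke has a larger parameter space than the point configuration alone; this should be made explicit when defining the compact set.
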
  
\begin{proof}
  For the scaling of the derivatives, we have $D^s_y\psi_i^k(y)=D_y^s\tilde{\psi}_i^k(y/h) = h^{-s}D^s_{\tilde{y}}\tilde{\psi}_i^k(\tilde{y})$. The integration relation is $\int_{\mathcal{V}_k}v(y)dy=\int_{\tilde{\mathcal{V}}_k}v(\tilde{y}) h^{-d}\,d\tilde{y}$. For integration along a boundary intersecting $\mathcal{V}_k$, we instead get $h^{d-1}$. To understand which factors influence the bounds, we introduce the matrices $A_1$ and $P_1$, which correspond to a scaling of the nodes such that $h=1$, and note that $A=h^3A_1$ and $P=P_1H$, where $H$ is a diagonal matrix with elements $h_{jj}=h^{|t_j|}$, where $p_j(x)=x^{t_j}$. Combining~\cref{eq:Bayona} and the scaled matrices leads to
\begin{equation}
  h^{|s|}D^s v_h^k(y)= v_h(X_k)^T(I-W_1P_1^T)A_1^{-1}D^s_{\tilde{y}}\phi(\tilde{y})+v_h(X_k)^T W_1D^s_{\tilde{y}}p(\tilde{y}).
  \label{eq:matform}
\end{equation}
We note the right hand side of~\cref{eq:matform} is scale invariant. The smallest eigenvalue of the symmetric matrix $A_1$ can be bounded in terms of the separation distance $q$ of the stencil node set~\cite[corollary 12.7]{Wend05}. When the nodes are scaled such that $h=1$, we have $q=c_q$. That is, by choosing  $q=c_{\min}$ for the bound, it holds for all discretizations in the sequence. All other matrices and vectors can be bounded in terms of the stencil size $n$, the dimension of the polynomial basis $m$, the polynomial degree $p$, and the derivative $s$. The bounds are larger for skewed stencils and grow with $p$.
\nnew{An upper bound for the numerator of~\cref{eq:ratbound} can be obtained by  directly bounding the matrices and vectors in~\cref{eq:matform}. The denominator is a mass matrix/stiffness matrix for the local scaled cardinal functions. The lower bound is attained for the (non-zero differentiated) cardinal function that has the smallest amount of mass in $\mathcal{V}_k$. The mass is smaller for skewed stencils as well as small Voronoi regions. The same parameters, $c_{\min}$, $n$, $m$, $p$ determine the behaviour.}
The local trial space functions are twice continuously differentiable, while the third derivatives exist at all points, but are piecewise continuous.
\end{proof}

To investigate the relations between the discrete and continuous norms (cf.~\cref{sec:discretenorm}), and the smoothed and discontinuous trial spaces (cf.~\cref{sec:smooth}), we introduce a semi-discrete bilinear form $a^*(v,v)$ using the norm
\begin{equation}
  \|v\|_{L_2^*(\Omega_i)}^2=(v,v)_{L_2^*(\Omega_i)}=\sum_{k=1}^N \int_{\mathcal{V}_k\cap\Omega_i}v(y)^2\,dy.
\end{equation}  
For a function $v\in W_2^2$ $\|\mathcal{L}_i v\|_{L_2^*(\Omega)}=\|\mathcal{L}_i v\|_{L_2(\Omega)}$, $i\leq 2$, while for the trial space functions, the semi-discrete norm eliminates the derivatives of the jumps.

\subsection{\new{The smoothed trial space}}
\label{sec:smooth}
We introduce a smoothing operator $S_2^0:V_h^0\mapsto V^0$, where
$V^0=\{v\in W_\infty^2(\Omega)\,|\, v(y)|_{\partial\Omega_0}=0\}$, and let $v=S_2^0(v_h)$. We never construct the smoothed trial space function $v$ in practice, but we use it as a tool in the error analysis (cf.~\cref{sec:theor}). First we define a set of overlapping patches by extending each Voronoi region $\mathcal{V}_k$ a distance $\delta h/2$, $\delta<1$ in the normal direction at each interior edge/face. To handle the Dirichlet boundary, we imagine a mirrored set of Voronoi regions outside the boundary and let these extend $\delta h$ into the domain. The Voronoi regions along the inside of the Dirichlet boundary are assumed to conform to the domain boundary where it intersects the region. As in the previous subsection, we denote the extended Voronoi regions by $\mathcal{V}_k^\delta$.

Then we construct a set of non-negative, compactly supported, partition of unity weight functions $\{\omega_k\}_{k=1}^{N+N_0}$, where the extra $N_0$ weight functions belong to the regions outside the Dirichlet boundary. We let $\omega_k\in W_\infty^2(\Omega)$ be supported on $\mathcal{V}_k^\delta$. That is, the weight functions overlap with a distance $\delta h$ at all shared edges. Let $\mathcal{V}_k^{-\delta}$ denote the interior part of the Voronoi region, and let $\Gamma_k^\delta=\mathcal{V}_k\setminus \mathcal{V}_k^{-\delta}$. For the weight functions it holds that $\sum_{k=1}^{N+N_0}\omega_k=1$, $y\in\Omega$, $\omega_k(y)=1$, $y \in \mathcal{V}_k^{-\delta}$, $D^s\omega_k(y)=0$, $y\in \mathcal{V}_k^{-\delta}$, and
\begin{equation}
  |D^s \omega_k| \leq \frac{G_{|s|}}{ h^{|s|}},\quad |s|\leq 3,\quad y\in \mathcal{V}_k^\delta\setminus \mathcal{V}_k^{-\delta},
  \label{eq:wderivs}
\end{equation}
where $G_0=1$. The second derivatives of the weight functions are continuous, while the third derivatives exist, but are only piecewise continuous. For further details on the construction of weight functions and their properties, see~\cite{Wend02,LaShcher17}. 
%

We connect the exterior weight functions with zero functions, i.e., $v_h^k\equiv 0$, $k>N$. Then we combine the local interpolants and weight functions to get
%
\begin{eqnarray}
  v&=&S_2^0(v_h)=\sum_{j=1}^N \omega_j(y)v_h^j(y)=\sum_{j\in J_k}^N\omega_j(y)v_h^j(y),  \quad y\in \mathcal{V}_k,\nonumber
\end{eqnarray}
where $J_k$ contains the indices of the patches that overlap with $\mathcal{V}_k$. 
\begin{theorem}
  \label{theor:S}
  Assume that~\cref{theor:Dv} holds, that $\delta<1$ is fixed for all discretizations and that $v=S_2^0(v_h)$ for $v_h\in V_h^0$.
Then the following relations hold:
  \begin{equation}
  \|v_h\|_{L_2(\Omega)}^2\leq (1+\eta_0)\|v\|_{L_2(\nnew{\Omega})}^2,
  \label{eq:eta0}
\end{equation}
\begin{equation}
  a(v,v)\leq (1+\eta_a)a^*(v_h,v_h)
  \label{eq:eta}
\end{equation} 
where the bounds $\eta_0$ and $\eta_a$ depend on $\delta$, the node quality $c_{\min}$, the dimension $d$, the stencil size $n$, the polynomial degree $p$, and the dimension of the polynomial space $m$.
\end{theorem}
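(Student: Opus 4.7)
My plan is to handle both inequalities using the common decomposition $\mathcal{V}_k=\mathcal{V}_k^{-\delta}\cup\Gamma_k^\delta$ and to exploit that the partition-of-unity construction gives $v\equiv v_h^k$ on $\mathcal{V}_k^{-\delta}$ (because $\omega_k=1$ and all other $\omega_j=0$ there). Consequently the entire discrepancy between $v$ and $v_h$ is confined to the buffers $\Gamma_k^\delta$, and every correction integral over $\Gamma_k^\delta$ can be bounded by the scaling and generalized-eigenproblem machinery already used in \cref{theor:Dv}: after $\tilde y=y/h$ the relevant geometric sets are $h$-independent (since $\delta$ is fixed and $c_q\geq c_{\min}$ uniformly), so ratios of $L_2$-integrals over those sets reduce to finite-dimensional generalized eigenvalues depending only on $\delta,c_{\min},d,n,p,m$.

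For \cref{eq:eta0} I would split
\begin{equation*}
\|v_h\|_{L_2(\Omega)}^2 = \sum_k\|v_h^k\|_{L_2(\mathcal{V}_k^{-\delta})}^2 + \sum_k\|v_h^k\|_{L_2(\Gamma_k^\delta)}^2,
\end{equation*}
use $v=v_h^k$ on $\mathcal{V}_k^{-\delta}$ to get the lower bound $\|v\|_{L_2(\Omega)}^2\geq\sum_k\|v_h^k\|_{L_2(\mathcal{V}_k^{-\delta})}^2$, and then invoke the generalized eigenproblem obtained by restricting \cref{eq:ratbound}-type quotients to the pair $(\Gamma_k^\delta,\mathcal{V}_k^{-\delta})$ with $s=t=q=0$ to get $\|v_h^k\|_{L_2(\Gamma_k^\delta)}^2\leq K_0^\delta\,\|v_h^k\|_{L_2(\mathcal{V}_k^{-\delta})}^2$ with $K_0^\delta$ independent of $h$. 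Summing in $k$ yields $\eta_0=K_0^\delta$.

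For \cref{eq:eta} I first note that the mirrored-exterior weight-function construction forces $v|_{\partial\Omega_0}\equiv 0$, so the Dirichlet piece of $a(v,v)$ vanishes. In each $\mathcal{V}_k$ the identity
\begin{equation*}
v-v_h^k = \sum_{j\in J_k\setminus\{k\}}\omega_j(v_h^j-v_h^k),
\end{equation*}
which follows from $\sum_j\omega_j=1$, is supported in $\Gamma_k^\delta$. The product rule gives
\begin{equation*}
\Delta(v-v_h^k) = \sum_{j\neq k}\bigl[\omega_j\,\Delta(v_h^j-v_h^k) + 2\,\nabla\omega_j\cdot\nabla(v_h^j-v_h^k) + (v_h^j-v_h^k)\,\Delta\omega_j\bigr],
\end{equation*}
and analogously for $\partial_n(v-v_h^k)$ on $\partial\Omega_1\cap\mathcal{V}_k$. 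Combining $|D^s\omega_j|\leq G_{|s|}/h^{|s|}$ from \cref{eq:wderivs} with Cauchy--Schwarz reduces the $L_2$-norm of the correction on $\Gamma_k^\delta$ to norms of $D^s v_h^j$ and $D^s v_h^k$ over $\Gamma_k^\delta$, $|s|\leq 2$, each weighted by $h^{-(2-|s|)}$. A Young inequality
\begin{equation*}
\|\Delta v\|_{L_2(\mathcal{V}_k)}^2 \leq (1+\epsilon)\|\Delta v_h^k\|_{L_2(\mathcal{V}_k)}^2 + (1+\epsilon^{-1})\|\Delta(v-v_h^k)\|_{L_2(\Gamma_k^\delta)}^2,
\end{equation*}
together with its Neumann-boundary analogue, then leaves me with correction pieces which I would bound by the matching components of $a^*(v_h,v_h)$ via \cref{eq:ratbound} applied in its scale-invariant form, the $h^{-(2-|s|)}$ factors cancelling against the $h^{|s|}$ picked up from the derivative rescaling. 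Summing over $k$, with $|J_k|$ bounded by a geometric constant, and choosing $\epsilon$ small enough produces the $(1+\eta_a)$ bound.

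The hardest part is this last bookkeeping: ensuring that the cross-stencil jumps $v_h^j-v_h^k$ and their derivatives, carrying negative $h^{-|s|}$ weights from $D^s\omega_j$, fold back into $a^*(v_h,v_h)$ without leaving any negative power of $h$. Scale-invariance of \cref{eq:ratbound} with $\delta$ fixed is engineered exactly for this cancellation, but verifying it requires tracking exponents separately for $|s|=0,1,2$ and treating the contributions against $\partial\Omega_0$, $\partial\Omega_1$, and the interior of $\Omega$ on their own. A secondary subtlety for \cref{eq:eta0} is nondegeneracy of the denominator mass matrix on $\mathcal{V}_k^{-\delta}$, which follows from the unisolvence hypothesis of \cref{theor:v0} together with $\mathcal{V}_k^{-\delta}$ having positive measure whenever $\delta<1$.
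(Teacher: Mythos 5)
Your proposal follows essentially the same route as the paper's proof: the same splitting of each Voronoi region into $\mathcal{V}_k^{-\delta}$ and the buffer $\Gamma_k^\delta$, the observation that $v=v_h^k$ on $\mathcal{V}_k^{-\delta}$, the Leibniz expansion combined with the weight-derivative bounds~\cref{eq:wderivs}, and the reduction of the buffer corrections to $h$-independent generalized-eigenvalue bounds via~\cref{theor:Dv}, summed over $k$ with $|J_k|$ bounded. The only cosmetic differences --- writing the correction as $\sum_{j}\omega_j(v_h^j-v_h^k)$ and isolating it with a Young inequality rather than splitting the integration domain as the paper does in~\cref{eq:415} --- do not change the substance.
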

\begin{proof}
  If $\|v_h\|_{L_2(\mathcal{V}_k)}>0$, then $\|v\|_{L_2(\mathcal{V}_k)}>0$ and $\|v\|_{L_2(\mathcal{V}_k^{-\delta})}>0$, and we use~\cref{theor:Dv} to bound the ratio
\begin{equation}
  \frac{\|v_h\|_{L_2(\Gamma_k^\delta)}^2}{\|v\|_{L_2(\mathcal{V}_k^{-\delta})}^2}=\frac{\|v_h\|_{L_2(\Gamma_k^\delta)}^2}{\|v_h\|_{L_2(\mathcal{V}_k^{-\delta})}^2}=\frac{v_h(X_k)^T\tilde{\Psi}_{k,\delta}^{I,I} v_h(X_k)}{v_h(X_k)^T\tilde{\Psi}_{k,-\delta}^{I,I} v_h(X_k)}\leq \eta_{k,0},
\end{equation}
where the subscript $k,\pm\delta$ denotes integration over \nnew{$\Gamma_{k}^\delta$ and $\mathcal{V}_k^{-\delta}$}.
Due to the size ratio of the domains, $\eta_{k,0}$ is approximately proportional to $\delta/(1-\delta)$. The other dependencies come from the basis functions, (cf.~\cref{theor:Dv}). We sum the local results to get 
\begin{eqnarray}
  \|v_h\|^{\nnew{2}}_{L_2(\Omega)}&=&\sum_{\nnew{k=1}}^{\nnew{N}}\|v_h\|^{\nnew{2}}_{L_2(\mathcal{V}_k^{-\delta})} + \|v_h\|^{\nnew{2}}_{L_2(\Gamma_k^{\delta})}\leq \sum_{k=1}^N(1+\eta_{k,0})\|v\|^{\nnew{2}}_{L_2(\mathcal{V}_k^{-\delta})}\nonumber\\
  &\leq& \max_{\nnew{k}} (1+\eta_{k,0})\|v\|^{\nnew{2}}_{L_2(\Omega)}.
\end{eqnarray}
Setting $\eta_0=\max_{k}(1+\eta_{k,0})$ gives the result~\cref{eq:eta0}.
\nnew{For the bilinear form, we start from the restriction to a Voronoi region. We have
\begin{eqnarray}
  a_k(v,v)&=&a(v,v)|_{\mathcal{V}_k}=a_k(v,v)|_{\mathcal{V}_k^{-\delta}}+a_k(v,v)|_{\Gamma_k^{\delta}}\label{eq:415}\\
  &=&a_k(v_h^k,v_h^k)|_{\mathcal{V}_k^{-\delta}} +a_k(\sum_{j\in J_k}\omega_{\nnew{j}}v_h^j,\sum_{j\in J_k}\omega_{\nnew{j}}v_h^j)|_{\Gamma_k^{\delta}}\nonumber\\
  &=&a_k(v_h^k,v_h^k)|_{\mathcal{V}_k^{-\delta}} + \sum_{i\in J_k}\sum_{j\in J_k}a_k(\omega_{\nnew{i}}v_h^i,\omega_{\nnew{j}}v_h^j)|_{\Gamma_k^{\delta}\cap\mathcal{V}_i^\delta\cap\mathcal{V}_j^\delta}\nonumber\\
  &\leq&a_k(v_h^k,v_h^k) + Q\sum_{j\in J_k}a_k(\omega_{\nnew{j}}v_h^j,\omega_{\nnew{j}}v_h^j)|_{\Gamma_k^{\delta}\cap\mathcal{V}_j^\delta},\nonumber
\end{eqnarray}
where $Q\approx 2^d$ is the largest number of extended Voronoi regions that overlap at any given point. We note that $\omega_k|_{\mathcal{V}_{k}^\delta\cap\partial\Omega_0}=0$, we introduce the notation $\Gamma_{k,j}^\delta=\Gamma_k^{\delta}\cap\mathcal{V}_j^\delta$ and $\Gamma_{k,j,1}^\delta=\Gamma_k^{\delta}\cap\mathcal{V}_j^\delta\cap\partial\Omega_1$,  and use~\cref{eq:wderivs} to estimate one term in the sum in~\eqref{eq:415}as
\begin{eqnarray}
  a_k(\omega_{\nnew{j}}v_h^j,\omega_{\nnew{j}}v_h^j)|_{\Gamma_{k,j}^{\delta}}&=&
  \|\Delta(\omega_jv_h^j)\|^2_{L_2(\Gamma_{k,j}^{\delta})} +
  \|\partial(\omega_jv_h^j)/\partial n\|^2_{L_2(\Gamma_{k,j,1}^{\delta})}\nonumber\\
  &\leq& 2\left(\|\Delta v_h^j\|^2_{L_2(\Gamma_{k,j}^{\delta})}
  +2G_1^2h^{-2}\|\nabla v_h^j \|^2_{L_2(\Gamma_{k,j}^{\delta})}
  + G_2^2h^{-4}\|v_h^j\|^2_{L_2(\Gamma_{k,j}^{\delta})}
  \right)\nonumber\\
  &+& 2\left(\|\partial v_h^j/\partial n\|^2_{L_2(\Gamma_{k,j,1}^{\delta})} +  G_1^2h^{-2}\|v_h^j\|^2_{L_2(\Gamma_{k,j,1}^{\delta})}\right).
  \label{eq:aknorms}
\end{eqnarray}
%
%
Then, we consider the quotient of the overlap terms in $\mathcal{V}_k$ and the bilinear form evaluated over the stencils surrounding and including $\mathcal{V}_k$. We use~\cref{theor:Dv} to write the norms in~\eqref{eq:aknorms} on scale invariant form:
%
%
\begin{eqnarray}
  \frac{\sum_{j\in J_k}a_k(\omega_{\nnew{j}}v_h^j,\omega_{\nnew{j}}v_h^j)|_{\Gamma_{k,j}^{\delta}}}{\sum_{j\in J_k}a_j(v_h^j,v_h^j)}&\leq&
  \frac{2\sum_{j\in J_k}v_h(X_j)^T\left(\tilde{\Psi}_{k,j}^{\Delta,\Delta}+2G_1^2\tilde{\Psi}_{k,j}^{\nabla,\nabla}+G_2^2\tilde{\Psi}_{k,j}^{I,I}
    \right) v_h(X_j)}{\sum_{j\in J_k}v_h(X_j)^T\left(\tilde{\Psi}_{j}^{\Delta,\Delta}+h\tilde{\Psi}_{j,1}^{\mathcal{L}_1,\mathcal{L}_1} +h^3\tilde{\Psi}_{j,0}^{I,I}\right) v_h(X_j)}\nonumber\\
 & + &  \frac{2h\sum_{j\in J_k}v_h(X_j)^T\left(\tilde{\Psi}_{j,k,1}^{\mathcal{L}_1,\mathcal{L}_1}
    + G_1^2\tilde{\Psi}_{j,k,1}^{I,I}\right) v_h(X_j)}{\sum_{j\in J_k}v_h(X_j)^T\left(\tilde{\Psi}_{j}^{\Delta,\Delta}+h\tilde{\Psi}_{j,1}^{\mathcal{L}_1,\mathcal{L}_1} +h^3\tilde{\Psi}_{j,0}^{I,I}\right) v_h(X_j)},
\label{eq:417}  
\end{eqnarray}
where we extended the notation for $\tilde{\Psi}$ to allow integration over $\Gamma_{j,k}$ as in~\eqref{eq:aknorms} and to include composite scalar operators, when the order is the same in each term. The denominator is zero if the bilinear form is zero over all involved stencils. However, with the PHS and polynomial basis functions that we use, this implies that the data is sampled from a polynomial of degree $\leq p$ in the nullspace of the Laplacian operator. Due to the overlap of the stencils and polynomial unisolvency, the data is consistent across stencils. The polynomial is represented exactly on all stencils and we have  
$a_k(v,v)=a_k(v_h^k,v_h^k)$. If the denominator is positive, similarly as in~\cref{theor:Dv}, we can find an upper bound through the generalized eigenproblem on the extended domain involving a Voronoi region and its neighbours. We denote the specific bound for $\mathcal{V}_k$ by $\eta_{k,a}$ and combine~\eqref{eq:415} and~\eqref{eq:417}, resulting in
%
%
{\small
\begin{equation*}
  a(v,v)=\sum_{k=1}^N a_k(v,v) \leq \sum_{k=1}^N\left( a_k(v_h^k,v_h^k) + 2Q\eta_{k,a}\sum_{j\in J_k}a_j(v_j^h,v_j^h)\right)\leq (1+\eta_a)a^*(v_h,v_h),
\end{equation*}
}
where $\eta_a=2Q\max_k \eta_{k,a}|J_k|$ provides the result~\eqref{eq:eta}.}
\end{proof}

\subsection{Discrete norm errors}\label{sec:discretenorm}
The discrete norm \new{$\|\cdot\|_{\ell_2(\Omega)}$} on the set of nodes $Y=\{y_i\}_{i=1}^M$ is an approximation of \new{the continuous norm $\|\cdot\|_{L_2(\Omega)}$, and for the global error estimate, we need to quantify the difference. We start from a generic integral:} 
\begin{equation}
\mathcal{I}=\int_\Omega f(y)dy = \frac{|\Omega|}{M}\sum_{i=1}^M f(y_i) + \gamma_I(f)=\mathcal{I}_h+\gamma_I(f).
\end{equation}
We want to use this relation for non-trivial domains, which means that we need to consider scattered nodes. Even if the nodes are regular in parts of the domain, they need to be somewhat irregular near the boundary. A very general error estimate for scattered node quadrature is given by 
\begin{equation}
  |\gamma_I(f)|\leq D_M(Y)V(f,\Omega),
  \label{eq:mc}
\end{equation}  
where $D_M(Y)$ is the star discrepancy of the node set and $V(f,\Omega)$ is the Hardy-Krause variation of $f$~\cite{APST17}. This has been shown for general domains and piecewise smooth functions in~\cite{BCGT13a,BCGT13b}. Both of the factors in~\eqref{eq:mc} are hard to quantify in general. However, the standard deviation of the error for an arbitrary node layout (Monte Carlo integration) in practical cases decreases as $\mathcal{O}(1/\sqrt{M})$ and for a low discrepancy (quasi random) node layout, it decreases as $\mathcal{O}((\log M)^d/M)$.
%
Furthermore, for a (piecewise) differentiable function, the total variation can be mesured as $V(f,\Omega)=\int_\Omega|\nabla f|dy$. Assuming that the evaluation node set $Y$ is quasi uniform and that $d$ is small enough for $(\log M)^d$ to be viewed as almost constant, we can estimate the error in the squared norm of a function $v\in W_2^1$  as
\begin{equation}
  \left|\|v\|_{L_2(\Omega_i)}^2 - \|v(Y)\|^2_{\ell_2(\Omega_i)}\right|\leq C_Ih_y^{d_i}\int_{\nnew{\Omega_i}}|\nabla (v^2)|dy,
  \label{eq:quaderr}
\end{equation}  
where $d_i$ is the dimensionality of $\Omega_i$. \new{To apply this estimate to the trial space function it is easiest to apply it locally to each Voronoi region. The number of points in each region is then small, but we get a statistical averaging through the sum.}
\begin{theorem}
  \label{theor:hy}
  %
  %
If~\cref{theor:Dv} holds and $h_y$ is chosen according to~\cref{eq:hy} \nnew{for a relative integration error tolerance $\tau<1$}, then
   \begin{equation}
     \|v_h(Y)\|_{\ell_2(\Omega)}\leq(1+\tau)\|v_h\|_{L_2(\Omega)},
     \label{eq:tau0}
   \end{equation}
   \begin{equation}
     a^*(v_h,v_h)\leq\frac{1}{(1-\tau)}a_h(v_h,v_h).
     \label{eq:tau2}
   \end{equation}
\end{theorem}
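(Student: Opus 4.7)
The plan is to apply the scattered-node quadrature error bound~\cref{eq:quaderr} locally on each Voronoi region $\mathcal{V}_k$ (for the interior term) and on each surface piece $\partial\Omega_i\cap\mathcal{V}_k$ (for the boundary terms), and then use the inverse inequalities obtained from the generalized eigenvalue framework of~\cref{theor:Dv} to convert the gradients that appear into inverse powers of $h$. Because $v_h=v_h^k$ is smooth (cubic PHS plus polynomial) on each $\mathcal{V}_k$, the quadrature estimate~\cref{eq:quaderr} applies region-by-region; summing the piecewise contributions then gives the global discrete/continuous comparison in terms of $\|\cdot\|_{L_2^*}$.

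For~\cref{eq:tau0} I would begin with
\begin{equation*}
  \bigl|\|v_h^k\|^2_{\ell_2(\mathcal{V}_k)}-\|v_h^k\|^2_{L_2(\mathcal{V}_k)}\bigr|\leq C_I h_y^d\!\int_{\mathcal{V}_k}|\nabla (v_h^k)^2|\,dy\leq 2C_I h_y^d\|v_h^k\|_{L_2(\mathcal{V}_k)}\|\nabla v_h^k\|_{L_2(\mathcal{V}_k)},
\end{equation*}
invoking Cauchy--Schwarz at the last step. Then~\cref{eq:ratbound} with $s=t=\nabla$, $q=I$, $d_i=d$ yields $\|\nabla v_h^k\|_{L_2(\mathcal{V}_k)}\leq C h^{-1}\|v_h^k\|_{L_2(\mathcal{V}_k)}$, so the local error is bounded by $C' h_y^d h^{-1}\|v_h^k\|^2_{L_2(\mathcal{V}_k)}$. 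Summing over $k$ and using $\|v_h\|_{L_2(\Omega)}=\|v_h\|_{L_2^*(\Omega)}$ gives
\begin{equation*}
  \bigl|\|v_h(Y)\|_{\ell_2(\Omega)}^2-\|v_h\|_{L_2(\Omega)}^2\bigr|\leq C' h_y^d h^{-1}\|v_h\|_{L_2(\Omega)}^2.
\end{equation*}
Choosing $h_y$ (this is the relation~\cref{eq:hy}) so that $C'h_y^d h^{-1}\leq (1+\tau)^2-1$ finishes the first claim after taking square roots.

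For~\cref{eq:tau2} I would repeat the same argument on each of the three terms of the bilinear form. On a Voronoi piece $\Omega_i\cap\mathcal{V}_k$ the quadrature estimate gives
\begin{equation*}
  \bigl|\|\mathcal{L}_i v_h^k\|^2_{\ell_2(\Omega_i\cap\mathcal{V}_k)}-\|\mathcal{L}_i v_h^k\|^2_{L_2(\Omega_i\cap\mathcal{V}_k)}\bigr|\leq 2 C_I h_y^{d_i}\|\mathcal{L}_i v_h^k\|_{L_2(\Omega_i\cap\mathcal{V}_k)}\|\nabla \mathcal{L}_i v_h^k\|_{L_2(\Omega_i\cap\mathcal{V}_k)},
\end{equation*}
and the analogue of~\cref{eq:ratbound} (applied with the composite operators $\nabla\mathcal{L}_i$ in the numerator and $\mathcal{L}_i$ in the denominator, both over $\Omega_i\cap\mathcal{V}_k$) supplies an inverse inequality $\|\nabla\mathcal{L}_i v_h^k\|\leq C_i h^{-1}\|\mathcal{L}_i v_h^k\|$ on the same sub-domain. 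The local relative error per term is therefore $\lesssim h_y^{d_i} h^{-1}$. Summing over $k$ and the three terms, weighted by $\beta_i^2$, yields $|a^*(v_h,v_h)-a_h(v_h,v_h)|\leq \tau\, a^*(v_h,v_h)$ whenever $h_y$ satisfies the bound in~\cref{eq:hy}, and rearranging gives~\cref{eq:tau2}.

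The delicate step is the last inverse inequality: as stated,~\cref{theor:Dv} places its denominator on all of $\mathcal{V}_k$, whereas here I need the denominator on $\Omega_i\cap\mathcal{V}_k$. I would address this by re-running the scale-invariant eigenvalue argument from the proof of~\cref{theor:Dv} over the sub-domain $\Omega_i\cap\mathcal{V}_k$, observing that the resulting bound depends only on $c_{\min}$, $n$, $m$, $p$, and on a shape parameter measuring how skewed the sub-domain is relative to $\mathcal{V}_k$; under the quasi-uniformity assumptions of~\cref{theor:Dv} this shape parameter is uniformly controlled. Apart from this, the remaining work is bookkeeping: collecting the constants $C$, $C_i'$ into a single threshold that determines the required smallness of $h_y$ in~\cref{eq:hy}.
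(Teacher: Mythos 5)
Your argument mirrors the paper's proof: both apply the scattered-node quadrature bound \cref{eq:quaderr} locally on each Voronoi region, convert the total-variation factor $\int|\nabla(v^2)|=2(|v|,|\nabla v|)$ into an $h^{-1}$ inverse-type bound using the scale-invariant quadratic forms of \cref{theor:Dv}, and then choose $h_y$ via \cref{eq:hy} so that the relative error is at most $\tau$. The only difference is organizational: you use Cauchy--Schwarz followed by a term-by-term inverse inequality on each $\Omega_i\cap\mathcal{V}_k$, whereas the paper bounds the ratio of the total local quadrature error to the entire local bilinear form through a single generalized eigenproblem, which is how it sidesteps the subdomain-denominator issue you correctly flag at the end.
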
  
\begin{proof}
  %
  %
  We use~\cref{theor:Dv} for the gradients
  \begin{equation}
    \int_{\mathcal{V}_{k,i}}|\nabla((\mathcal{L}_iv_h^k)^2)|dy =
    2(|\nabla\mathcal{L}_iv_h^k|,|\mathcal{L}_iv_h^k|)_{\mathcal{V}_{k,i}}
    = h^{d_i-2i-1}v_h(X_k)^T \tilde{\Psi}_{k,i}^{|\nabla\mathcal{L}_i|,|\mathcal{L}_i|}v_h(X_k)
  \end{equation}  
  and then again for the relative errors when $\|v_h\|_{L_2(\mathcal{V}_k)}>0$ and $a_{k,k}^*(v,v)>0$ to get
  \begin{eqnarray}
\frac{\left|\|v_h\|_{L_2(\mathcal{V}_k)}^2-\|v_h(Y)\|^2_{\ell_2(\mathcal{V}_k)}\right|}{\|v_h\|_{L_2(\mathcal{V}_k)}^2}\leq 2C_I\frac{h_y^d}{h} K_I^{|\nabla|,|I|}.
  \label{eq:relnorm}
\end{eqnarray}
%
\begin{eqnarray}
  \lefteqn{\frac{\left|\sum_{i=0}^2\left(\|\mathcal{L}_iv_h\|_{L_2(\mathcal{V}_{k,i})}^2 -\|\mathcal{L}_iv_h(Y)\|_{\ell_2(\mathcal{V}_{k,i})}^2\right)\right|}{\sum_{i=0}^2\|\mathcal{L}_iv_h\|_{L_2(\mathcal{V}_k,i)}^2}}\\
  &\leq&2C_I\frac{h_y^d}{h}\frac{v_h(X_k)^T(\tilde{\Psi}_k^{|\nabla\Delta|,|\Delta|}+h_y^{-1}h\tilde{\Psi}_{k,1}^{|\nabla\mathcal{L}_1|,|\mathcal{L}_1|}+h_y^{-1}h^3\tilde{\Psi}_{k,0}^{|\nabla|,|I|}) v_h(X_k)}{v_h(X_k)^T(\tilde{\Psi}_k^{\Delta,\Delta}+h\tilde{\Psi}_{k,1}^{\mathcal{L}_1,\mathcal{L}_1}+h^3\tilde{\Psi}_{k,0}^{I,I}) v_h(X_k)} \nonumber\\
   &\leq& 2C_I\left(\frac{h_y^d}{h}K_\Delta^{|\nabla\Delta|,|\Delta|} + h_y^{d-1}K_\Delta^{|\nabla\mathcal{L}_1|,|\mathcal{L}_1|,1}+ h_y^{d-1}h^2K_\Delta^{|\nabla|,|I|,0} \right). \nonumber
  \label{eq:relbilinear}
\end{eqnarray}
The first coefficient is the critical one, so if we choose
\begin{equation}
  h_y\leq \left(\frac{\tau h}{2C_I\max\left(K_I^{|\nabla|,|I|},K_\Delta^{|\nabla\Delta|,|\Delta|}\right) }\right)^\frac{1}{d},
\label{eq:hy}  
\end{equation}  
then the relative error is in both cases bounded by $\tau$ locally and globally.
\end{proof}

\section{Convergence and error estimates for RBF-FD-LS}
\label{sec:theor}


In this section, we derive stability, convergence and error estimates for the RBF-FD-LS method. When solving the least squares problem numerically in the form~\eqref{eq:ls}, we have not experienced practical problems with well-posedness. However, from the theoretical perspective it simplifies the analysis to have the Dirichlet boundary conditions imposed strongly as in~\eqref{eq:nonhomres}, which means that the error is zero at the node set $X_{\partial\Omega_0}$. When performing the analysis, we assume that the weights $\beta_i=1$, $i=0,\,1,\,2$ in~\eqref{eq:method:scaling}. Scaling is discussed separately in \cref{sec:errors:scaling}. \new{Before stating the global error estimate, we prove coercivity for the continuous bilinear form in a homogeneous space, and then relate the discrete bilinear form to the continuous bilinear form.}


\subsection{Coercivity of the continuous bilinear form in a homogeneous space}
\new{We investigate the coercivity of the continuous bilinear form~\eqref{eq:bilinearcont} for functions $v\in V^0=\{v\in W_2^2(\Omega)\,|\, v(y)|_{\partial\Omega_0=0\}}$.} 
Given the smoothness assumptions on the domain $\Omega$ and the function $v$, a Poin\-car\'e-Friedrich inequality holds with the boundary data given on some part of the boundary~\cite{BouCha07}. This can be seen if the inequality is shown using integration along paths from points on the Dirichlet boundary to points in the domain.
\begin{equation}
  \|v\|_\Omega^2\leq C_P^2(\|v\|_{\partial\Omega_0}^2+\|\nabla v\|_\Omega^2)=C_P^2\|\nabla v\|_\Omega^2,\quad \new{v\in W_2^1(\Omega)}.
  \label{eq:pc}
\end{equation}
%
We also need a trace inequality that relates the solution on (any part of) the boundary to the solution in the interior. \new{The following inequality~\cite[Theorem 1.6.6]{BreSco08}, \cite[Theorem A.4]{LaTho03} holds for domains with Lipschitz or smooth boundary:
\begin{equation}
  \|v\|_{\partial\Omega_i}^2\leq C_T^2\|v\|_\Omega(\|v\|_\Omega^2+\|\nabla v\|_\Omega^2)^\frac{1}{2}\leq C_T^2C_P\sqrt{C_P^2+1}\|\nabla v\|_\Omega^2,\quad v\in W_2^1(\Omega),
  \label{eq:trace}
\end{equation}
where~\cref{eq:pc} was used for the second inequality.}
%
Then we have Green's first identity that can be derived from the divergence theorem
\begin{equation}
  \int_\Omega \nabla u\cdot \nabla v = \int_{\partial\Omega}u\frac{\partial v}{\partial n}-\int_\Omega u\Delta v,\quad \new{u\in W_2^1(\Omega), \quad v\in W_2^2(\Omega)},
\end{equation}  
leading to
\begin{eqnarray}
  \|\nabla v\|_\Omega^2&\leq& \|v\|_{\partial\Omega_0}\|\partial v/\partial n\|_{\partial\Omega_0} + \|v\|_{\partial\Omega_1}\|\partial v/\partial n\|_{\partial\Omega_1}+ \|v\|_\Omega\|\Delta v\|_\Omega\nonumber\\
  &=& \|v\|_{\partial\Omega_1}\|\partial v/\partial n\|_{\partial\Omega_1}+ \|v\|_\Omega\|\Delta v\|_\Omega,
  \label{eq:gs}
\end{eqnarray}  
where we separated the boundary integral into two parts due to the structure of our specific problem, and used that functions in $V^0$ vanish on $\partial\Omega_0$.

\new{To show coercivity, we start from~\eqref{eq:gs}, then use the trace inequality~\eqref{eq:trace} on the first term, and use the Poincar\'e inequality~\eqref{eq:pc} on the second term.
  \begin{eqnarray}
  \|\nabla v\|_\Omega^2&\leq& \|v\|_{\partial\Omega_1}\|\partial v/\partial n\|_{\partial\Omega_1}+ \|v\|_\Omega\|\Delta v\|_\Omega\nonumber \\  
  & \leq & C_T\sqrt{C_P}\sqrt[4]{C_P^2+1}\|\nabla v\|_\Omega\|\partial v/\partial n\|_{\partial\Omega_1} + C_P\|\nabla v\|_\Omega\|\Delta v\|_\Omega.
    \end{eqnarray}
Dividing through by the gradient norm, squaring the result, and using $(a+b)^2\leq 2a^2+2b^2$ leads to:
\begin{equation}
    \|\nabla v\|^2_\Omega\leq 2C_T^2C_P\sqrt{C_P^2+1}\|\partial v/\partial n\|_{\partial\Omega_1}^2 + 2C_P^2\|\Delta v\|_\Omega^2.
\end{equation}  
Let $C_1^2=2C_T^2C_P^2\sqrt{1+\frac{1}{C_P^2}}$ and $C_2^2 = 2C_P^2$.} Using~\eqref{eq:pc} one more time, we have:
\begin{equation}
  \|v\|^2_\Omega \leq C_P^2\|\nabla v\|^2_\Omega \leq C_P^2 \max(C_1^2,C_2^2)(\|\partial v/\partial n\|_{\partial\Omega_1}^2+ \|\Delta v\|_\Omega^2).
\end{equation}
If we finally let $C^2= C_P^2 \max(C_1^2,C_2^2)$, we have the coercivity result
\begin{equation}
  C^2a(v,v)\geq \|v\|^2_\Omega, \quad v\in V^0.
  \label{eq:stab0}
\end{equation}

\subsection{\new{Coercivity of the discrete bilinear form in a homogeneous space}}
%
\begin{theorem}
  \label{theor:stabh}
  If~\cref{theor:Dv} and~\cref{theor:S} hold, $h_y$ is chosen according to~\cref{theor:hy}, and the continuous bilinear form is coercive, then the discrete bilinear form is coercive for $v_h\in V_h^0$ such that
  \begin{equation}
    \|v_h\|_{\ell_2(\Omega)}\leq C^2\frac{(1+\tau)(1+\eta_0)(1+\eta_a)}{(1-\tau)}a_h(v_h,v_h)\equiv C_ha_h(v_h,v_h).
  \end{equation}  
\end{theorem}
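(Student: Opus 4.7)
The plan is to chain five inequalities that already sit in the paper, pushing $\|v_h\|_{\ell_2(\Omega)}^2$ through the continuous world and back. Schematically,
\begin{equation*}
\|v_h\|_{\ell_2(\Omega)}^2 \;\underset{\eqref{eq:tau0}}{\leq}\; (1+\tau)\|v_h\|_{L_2(\Omega)}^2 \;\underset{\eqref{eq:eta0}}{\leq}\; (1+\tau)(1+\eta_0)\|v\|_{L_2(\Omega)}^2 \;\underset{\eqref{eq:stab0}}{\leq}\; C^2(1+\tau)(1+\eta_0)\,a(v,v),
\end{equation*}
with $v=S_2^0(v_h)$, and then
\begin{equation*}
a(v,v) \;\underset{\eqref{eq:eta}}{\leq}\; (1+\eta_a)\,a^*(v_h,v_h) \;\underset{\eqref{eq:tau2}}{\leq}\; \frac{1+\eta_a}{1-\tau}\,a_h(v_h,v_h),
\end{equation*}
so that multiplying through yields exactly the stated constant $C_h$.

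Concretely, the first step I would take is to record the squared-norm version of~\eqref{eq:tau0}; this is in fact the form in which \cref{theor:hy} is proved, via the relative error bound~\eqref{eq:relnorm}. I would then apply the smoothing bound~\eqref{eq:eta0} from \cref{theor:S} to replace the piecewise trial-space norm by the continuous norm of the smoothed function $v=S_2^0(v_h)$.

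The crucial pivot is invoking the continuous coercivity~\eqref{eq:stab0}, which requires $v \in V^0$. I would verify this explicitly: since $v_h \in V_h^0$ means $v_h(X_{\partial\Omega_0})=0$, and since the smoothing operator $S_2^0$ is built so that the weight functions attached to the mirrored exterior Voronoi regions reach $\delta h$ into $\Omega$ while the interior weight functions vanish on $\partial\Omega_0$ (and the extended local cardinal interpolants of zero data are identically zero outside), the combined sum vanishes on $\partial\Omega_0$. Because $v \in V^0 \subset W_2^2(\Omega)$ by construction, \eqref{eq:stab0} applies.

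The final two steps reverse the bridge: the bilinear-form inequality~\eqref{eq:eta} from \cref{theor:S} brings us back from the smoothed $v$ to the semi-discrete $a^*(v_h,v_h)$, and~\eqref{eq:tau2} from \cref{theor:hy} converts $a^*$ into the genuinely discrete $a_h$, contributing the $1/(1-\tau)$ factor. The main obstacle is essentially conceptual rather than computational—making sure the smoothed function really lies in $V^0$ so the Poincaré/trace chain underlying~\eqref{eq:stab0} is legitimate; once that is settled, the proof is a one-line concatenation with no further estimates, and the constants assemble into $C_h = C^2(1+\tau)(1+\eta_0)(1+\eta_a)/(1-\tau)$ as claimed.
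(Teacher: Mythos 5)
Your proposal is correct and follows exactly the paper's own argument: the published proof likewise notes that $v=S_2^0(v_h)\in V^0$ so continuous coercivity applies, and then chains \eqref{eq:tau0}, \eqref{eq:eta0}, \eqref{eq:stab0}, \eqref{eq:eta}, and \eqref{eq:tau2} in the same order to assemble $C_h$. Your explicit check that the smoothed function vanishes on $\partial\Omega_0$ is a welcome elaboration of a point the paper leaves implicit.
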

\begin{proof}
  Since $v=S_2^0\in W_2^2(\Omega)$ the continuous coercivity property holds for $v$, showing that $a(v,v)>0$ and hence $a^*(v_h,v_h)>0$, and we can use~\cref{eq:tau0}, \cref{eq:eta0}, \cref{eq:stab0}, \cref{eq:eta}, and~\cref{eq:tau2} in sequence.
\end{proof}


\subsection{The global error estimate}


\new{\begin{theorem}
    Consider the least squares problem~\eqref{eq:nonhomres} with properties~\eqref{eq:ah1} and~\eqref{eq:ah2}. If~\cref{theor:Dv} and~\cref{theor:S} hold, $h_y$ is chosen according to~\cref{theor:hy}, and~\cref{theor:stabh} holds for the trial space error $e_h=u_h+u_h^0-I_h(u) \in V_h^0$, then the error $e=u_h+u_h^0-u$ satisfies  
\begin{equation}
  \|e\|_{\ell_2(\Omega)}\leq C\left(\frac{(1+\tau)(1+\eta_0)(1+\eta_a)}{(1-\tau)}\right)^\frac{1}{2}\sqrt{2a_h(e_I,e_I)} + \|e_I\|_{\ell_2(\Omega)},
  \label{eq:errfinal}
\end{equation}
where $e_I=I_h(u)-u$ is the interpolation error.
\label{theor:global}
\end{theorem}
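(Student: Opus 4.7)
The plan is to split the total error through the RBF-FD interpolant and then apply the previously established ingredients in sequence. I write
\begin{equation*}
e = u_h + u_h^0 - u = \underbrace{(u_h+u_h^0 - I_h(u))}_{e_h} + \underbrace{(I_h(u) - u)}_{e_I},
\end{equation*}
and note that $e_h \in V_h^0$: since $u_h^0 = f_0 = u$ at every Dirichlet node $x_j \in X_{\partial\Omega_0}$ and $I_h(u) = u$ there by interpolation, $e_h$ vanishes on $X_{\partial\Omega_0}$. The triangle inequality in $\ell_2(\Omega)$ then gives
\begin{equation*}
\|e\|_{\ell_2(\Omega)} \leq \|e_h\|_{\ell_2(\Omega)} + \|e_I\|_{\ell_2(\Omega)},
\end{equation*}
which immediately isolates the interpolation-error term appearing in \eqref{eq:errfinal} and reduces the argument to bounding $\|e_h\|_{\ell_2(\Omega)}$ by a multiple of $\sqrt{a_h(e_I,e_I)}$ with the prescribed constant.

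To control $\|e_h\|_{\ell_2(\Omega)}$ I would first invoke the discrete coercivity estimate of \cref{theor:stabh}, which applies precisely because $e_h \in V_h^0$, obtaining
\begin{equation*}
\|e_h\|_{\ell_2(\Omega)}^2 \leq C^2\,\frac{(1+\tau)(1+\eta_0)(1+\eta_a)}{1-\tau}\, a_h(e_h,e_h).
\end{equation*}
Next I would bring in the Galerkin orthogonality \eqref{eq:ah2}, namely $a_h(e,v_h) = 0$ for all $v_h \in V_h^0$. Choosing $v_h = e_h$ and substituting $e = e_h + e_I$ gives the key identity $a_h(e_h,e_h) = -a_h(e_I,e_h)$. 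Since $a_h$ is a weighted sum of discrete $\ell_2$ inner products, it is symmetric and positive semidefinite, so the Cauchy--Schwarz inequality yields $a_h(e_h,e_h) \leq a_h(e_I,e_I)$ after dividing out one factor of $\sqrt{a_h(e_h,e_h)}$. Combining these bounds with the triangle inequality above produces \eqref{eq:errfinal}; the extra factor $2$ inside the square root is simply slack that also appears naturally if one replaces the orthogonality step with the best-approximation identity $a_h(e,e) \leq a_h(e_I,e_I)$ (using $I_h(u)-u_h^0 \in V_h^0$ as a competitor in the least squares minimization \eqref{eq:ah1}) together with the bilinear-form triangle inequality $\sqrt{a_h(e_h,e_h)} \leq \sqrt{a_h(e,e)} + \sqrt{a_h(e_I,e_I)}$.

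The main obstacle is not algebraic but bookkeeping: $e_h$ lives in the piecewise-smooth space $V_h^0$ with jumps across Voronoi boundaries, $u$ is globally smooth, and $e_I$ mixes the two. Two things need verification. First, one must check that $e_h$ really lies in $V_h^0$ before applying \cref{theor:stabh}, which is immediate from the interpolation property of $u_h^0$ and $I_h(u)$ at the Dirichlet nodes. Second, one must confirm that $a_h(e_I,\cdot)$ is well-defined despite $e_I$ not being a pure trial-space function; this is also immediate because $a_h$ samples its arguments only at the evaluation set $Y$ via uniquely selected local stencils, so every entry in $a_h(e_I,e_h)$ is a finite sum of pointwise values. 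Once those sanity checks are made, the derivation above is the standard least-squares/Galerkin error-analysis template, with all of the PDE-specific technicalities already absorbed into \cref{theor:stabh}.
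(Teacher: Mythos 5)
Your proposal is correct and follows essentially the same route as the paper: the same splitting $e=e_h+e_I$, the triangle inequality, the coercivity of \cref{theor:stabh} applied to $e_h\in V_h^0$, and the orthogonality \eqref{eq:ah2} with $v_h=e_h$. The only difference is the last algebraic step: you close the estimate via Cauchy--Schwarz on $a_h(e_h,e_h)=-a_h(e_I,e_h)$ to get $a_h(e_h,e_h)\leq a_h(e_I,e_I)$, while the paper uses $|a_h(u,v)|\leq\frac{1}{2}a_h(u,u)+\frac{1}{2}a_h(v,v)$ together with the minimization property \eqref{eq:ah1} to get $a_h(e_h,e_h)\leq 2a_h(e_I,e_I)$; your version is marginally sharper (making the factor $2$ in \eqref{eq:errfinal} slack, as you note) and does not even need \eqref{eq:ah1}.
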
}  
\begin{proof}
  
  The error $e=u_h+u_h^0-u$ does not lie in the trial space unless $u$ lies in the trial space. Also, $u_h+u_h^0$ does not in general interpolate $u$.
  We use the interpolant $I_h(u)\in V_h$ as an auxiliary function to write $e=u_h+u_h^0-u=(u_h+u_h^0-I_h(u))+(I_h(u)-u)=e_h+e_I$. The first term $e_h$ has nodal values $e(X)$ and $e_h\in V_h^0$, since the Dirichlet condition is imposed strongly. The second term $e_I$ is the interpolation error, which is not in the trial space.
  We split the error to get
\begin{equation}
  \|e\|_{\ell_2(\Omega)}=\|e_h+e_I\|_{\ell_2(\Omega)}\leq \|e_h\|_{\ell_2(\Omega)} +\|e_I\|_{\ell_2(\Omega)}.
  \label{eq:err1}
\end{equation}
For the trial space error, we start from~\cref{theor:stabh}, and then use~\eqref{eq:ah2}, together with $|a_h(u,v)|\leq \frac{1}{2}a_h(u,u) + \frac{1}{2}a_h(v,v)$, and~\eqref{eq:ah1}
\begin{eqnarray}
  \frac{1}{C_h^2}\|e_h\|^2_{\ell_2(\Omega)}&\leq& a_h(e_h,e_h)= a_h(e-e_I,e_h) =a_h(-e_I,e_h)=a_h(-e_I,e-e_I)\nonumber\\
  &=& a_h(-e_I,e) + a_h(e_I,e_I)\leq 0.5a_h(e,e) + 1.5a_h(e_I,e_I)\nonumber\\
  &\leq&2a_h(e_I,e_I),
  \label{eq:errah}
\end{eqnarray}
leading to
\begin{equation}
  \|e_h\|_{\ell_2(\Omega)}\leq C_h\sqrt{2a_h(e_I,e_I)}.
  \label{eq:err2}
\end{equation}
Combining~\eqref{eq:err1} and~\eqref{eq:err2} provides the final result~\eqref{eq:errfinal}.%
\end{proof}

\subsection{The details of the global error estimate including scaling}\label{sec:errors:scaling}
The components of the global error estimate are now in place, and we can discuss their properties as well as the question about scaling of the different terms in the bilinear form.
Equation~\eqref{eq:eint} for the interpolation error $e_I=I_h(u)-u$ yields
\begin{equation}
  \|e_I\|_{\ell_2\new{(\Omega)}}\leq \alpha_0h^{p+1}|u|_{W_\infty^{p+1}(\Omega)},
\end{equation}  
and for the bilinear form applied to the interpolation error we have
\begin{equation}
  a_h(e_I,e_I)\leq \left((\alpha_0h^{p+1})^2+(\alpha_1h^{p})^2+(\alpha_2h^{p-1})^2\right)|u|_{W_\infty^{p+1}(\Omega)}^2.
\end{equation}
%
Noting that $a^2+b^2+\cdots\leq(a+b+\cdots)^2$ for positive numbers, we insert all terms in the global error estimate~\eqref{eq:errfinal}, to get
\begin{equation}
  \|e\|_{\ell_2\new{(\Omega)}}\leq \sqrt{2}C\left(\frac{(1+\tau)(1+\eta_0)(1+\eta_a)}{(1-\tau)}\right)^\frac{1}{2}\left(\tilde{\alpha}_0h^{p+1} + \alpha_1h^{p} +\alpha_2h^{p-1}\right)|u|_{W_\infty^{p+1}(\Omega)},
  \label{eq:finalerror}
\end{equation}
where $\tilde{\alpha}_0=\alpha_0(1+\frac{1}{\sqrt{2}C_h})$. 
%
%
The error estimate tells us that $h$ should be chosen to resolve the solution function, while $h_y\leq h$ should be chosen to resolve the integrals of the trial space error and its derivatives. 

The error expression~\eqref{eq:finalerror} indicates that the error may be somewhat improved, by adjusting the scale factors $\beta_i$ in~\eqref{eq:method:scaling}. The three terms in the sum change directly with the scale factors, while the stability constant depends on the worst case, leading to 
\new{\begin{equation}
  C_\beta = \sqrt{2}C_P^2\max\left(\frac{C_T\sqrt[4]{1+\frac{1}{C_P^2}}}{\beta_1},\frac{1}{\beta_2}\right)=C_P\max\left(\frac{C_1}{\beta_1},\frac{C_2}{\beta_2}\right).
\end{equation}
Choosing the scaling such that $C_1/\beta_1=C_2/\beta_2$, can in principle reduce the overall scaled error estimate. However, since we do not know the constants a priori, we use $\beta_1=\beta_2=1$ in the numerical experiments.}

\new{The scaling $\beta_0$ of the Dirichlet condition does not affect the stability constant, but it can be beneficial to increase $\beta_0$ to reduce the errors near the boundary.} The largest scaling such that the order of the Dirichlet term does not dominate the order of the Neumann term is $\beta_0=\mathcal{O}(h^{-1})$.
This scaling strategy is evaluated numerically in \cref{sec:numericalstudy} and is shown to perform well.   

\section{Numerical study}\label{sec:numericalstudy}
\new{In this section, we investigate the convergence, stability, and efficiency of least-squares based RBF-FD, compared with collocation-based RBF-FD.}
\new{The two methods are tested in two flavors: with additional ghost points (RBF-FD-LS-Ghost, RBF-FD-C-Ghost) and without ghost points (RBF-FD-LS, RBF-FD-C).} 
We solve the PDE problem~\eqref{eq:methods:Poisson} using the scaling~\eqref{eq:method:scaling} with 
\new{$\beta_0=h^{-1}$, $\beta_1=1$, and $\beta_2=1$} on a domain with an outer boundary defined in polar coordinates as
$r(\theta) = 1 + \frac{1}{10} \left( \sin(7\theta) + \sin(\theta) \right)$, $\theta \in [-\pi, \pi)$,
and with the Dirichlet boundary $\partial \Omega_0$ defined by $r(\theta)$, $\theta \in [-\pi, 0)$, and the Neumann boundary $\partial \Omega_1$ defined by $r(\theta)$, $\theta \in [0, \pi)$. 
The node sets $X$ and $Y$ are generated using \emph{DistMesh}~\cite{distmesh}. However, in order to enforce $X\subset Y$, we modify the $Y$ node set such that for each $x_k\in X$ we find 
the closest point $\tilde{y}_j$ in the initial node set $\tilde{Y}$, and then let $y_j=x_k$ in the final node set $Y$. 
The Dirichlet boundary conditions are enforced exactly at $X_{\partial\Omega_0}$ according to \eqref{eq:nonhomres}.

The domain $\Omega$, an example of the spatial error distribution, the node sets $X$ and $Y$, and the Voronoi diagram corresponding to the node set $X$ are shown in \cref{fig:experiments:Poisson:exactsolution}. 
All experiments were run in MATLAB on a laptop with an Intel i7-7500U processor and 16 GB of RAM. The code that was used to generate the rectangular and square RBF-FD matrices is available in \cite{Tominec_rbffdcode2021}.
\begin{figure}[!htb]
\centering
\begin{tabular}{cc}
\includegraphics[width=0.45\linewidth]{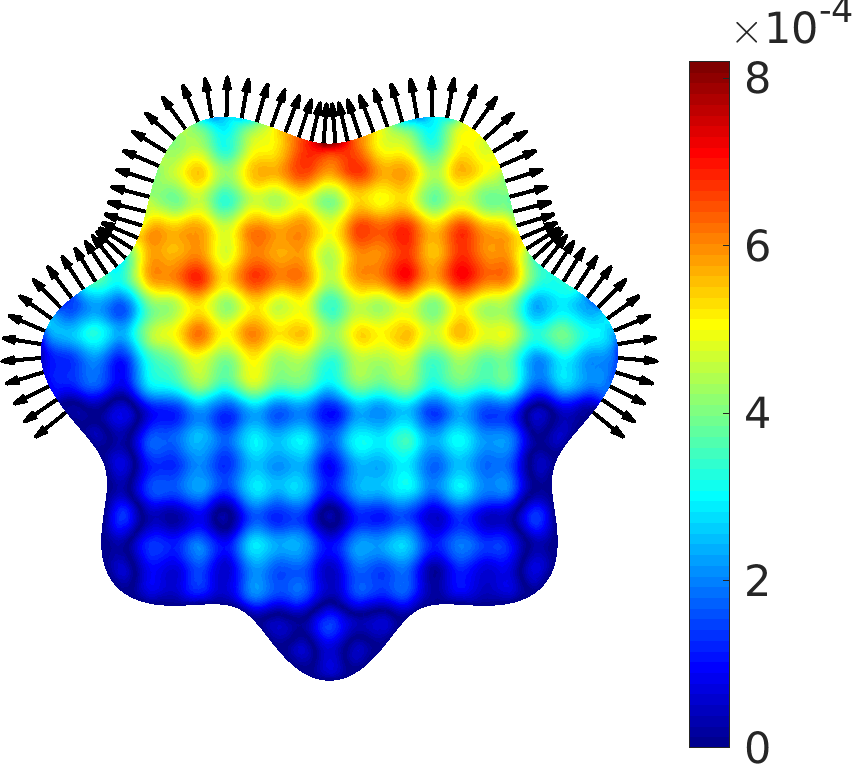} & \includegraphics[width=0.4\linewidth]{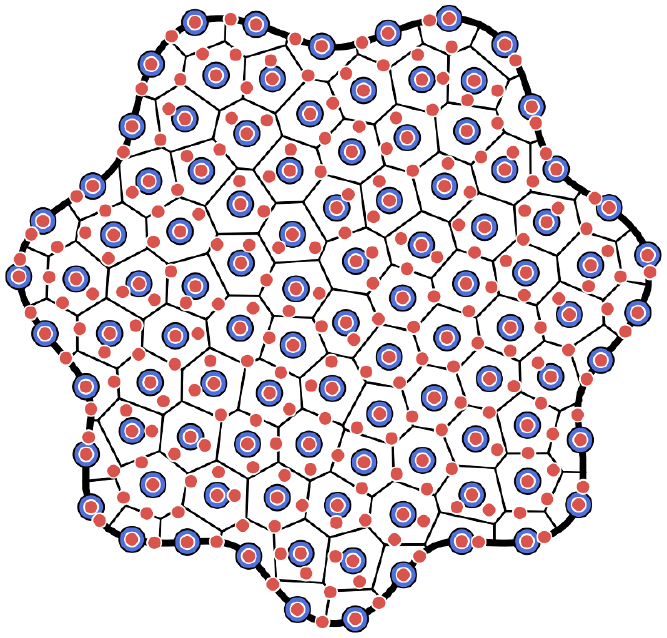} \\
a) & b) \\
\end{tabular}
\caption{a) A contour plot of the absolute error distribution over the domain $\Omega$ for the truncated Non-analytic solution function \eqref{eq:experiments:nonanalytic} when $h=0.02$, $p=3$, $q=3$. The outward normals indicate the locations where the Neumann condition is enforced. b) The $X$ node set (large blue markers) and the $Y$ node set (small red markers) are shown together with the Voronoi diagram for the $X$ node set. Each Voronoi cell contains on average three $Y$ node points.}
\label{fig:experiments:Poisson:exactsolution}
\end{figure}

In the numerical study, we focus on the three main method parameters: The node distance $h$, the oversampling parameter $q=\frac{M}{N} = (h/h_y)^2$, which determines $h_y$, and the polynomial degree $p$. 
When nothing else is stated, we use the default value $q=3$ for the oversampling. 
\new{When applicable, the ghost points are added to the node set $X$  as an additional layer outside $\Omega$. 
The layer is generated by projecting every boundary point a distance $h$ in the normal direction. The matrix $\bar{D}_h$ and the right-hand-side vector $\bar{F}$ are then modified such that 
the Laplacian is sampled also on $\partial \Omega$. In the RBF-FD-LS case, the size of $\bar{D}_h$ then grows from $M\times N$ to $(M+N_g) \times (N+N_g)$ and in the RBF-FD-C case 
from $N \times N$ to $(N+N_g) \times (N+N_g)$, 
where $N_g$ is the number of ghost points equal to the number of $X$ points at the boundary.}
The stencil size in all experiments is $n=2m$, where $m$ is the dimension of the polynomial space. In the convergence experiments, we measure the relative $\ell_2$-error
\begin{equation}
  \|e(Y)\|_{\new{\ell_2(\Omega)}} = \frac{\|u_h(Y)-u(Y)\|_{\ell_2\new{(\Omega)}}}{\|u(Y)\|_{\ell_2\new{(\Omega)}}}.
\end{equation}
We also investigate the stability norm by defining it as the ratio of the largest singular value of $\bar{E}_h(Y,\tilde{X})$ and the smallest singular value of $\bar{D}_h(Y,\tilde{X})$. The stability norm provides a numerical value for the coercivity constant $C_h$ in the global error estimate~\eqref{eq:errfinal}. Using~\eqref{eq:nonhomres} for the interior solution, we have
    \begin{eqnarray}
        \|u_h(Y)\|_{\ell_2\new{(\Omega)}} &=& \|\bar{E}_h(Y,\tilde{X})\, u(\tilde{X})\|_2 = \|\bar{E}_h\, \bar{D}_h^{+}\, \tilde{F}(Y)\|_2  \nonumber\\
        &\leq& \|\bar{E}_h\|_2\, \|\bar{D}_h^{+}\|_2\, \|\tilde{F}(Y)\|_2 =
        \frac{\sigma_{\max} (\bar{E}_h)}{\sigma_{\min} (\bar{D}_h)}\|\tilde{F}(Y)\|_2.
        \label{eq:stabilitynorm}
    \end{eqnarray}

\subsection{Errors and convergence tests for different functions}
\label{sec:experiments:h-refinement:differentSolutions}
Three solution functions which are different in nature are used to compute the right-hand-side data of 
\eqref{eq:methods:Poisson}. 
The purpose of this test is to show the differences in the error behavior and to 
pick one solution function for which the method is later on tested more extensively. 
Additionally, we solve the PDE problem 
with only Dirichlet boundary data. 
The following functions are considered:
\begin{eqnarray}
    u_1(x,y) &=& \sqrt{x^2 + y^2}, \\
    \label{eq:experiments:nonanalytic}
    u_2(x,y) &=& \sum_{k=0}^5 e^{-\sqrt{2^k}}\left( \cos(2^k x) + \cos(2^k y)\right),\\
    \label{eq:experiments:rationalsine}        
    u_3(x,y) &=& \sin\big(2(x-0.1)^2\big) \cos\big((x-0.3)^2\big) + \frac{\sin\big(2(y-0.5)^2\big)^2}{1 + 2x^2 + y^2}, 
\end{eqnarray}
which are referred to by the following names: Distance, truncated Non-analytic and Rational sine, respectively.
The polynomial degree $p=5$ is used for computing the local interpolation matrices. 
The error under refinement of $h$ is displayed in \cref{fig:experiments:Poisson:h-refinement_differentFunctions:error}.
\begin{figure}[!htb]
    \small
    \centering
    \begin{tabular}{ccc}
           \phantom{xxxx}Distance & \phantom{xxxxj}Non-analytic  & \phantom{xxxxj}Rational sine\\
        \includegraphics[width=0.268\linewidth]{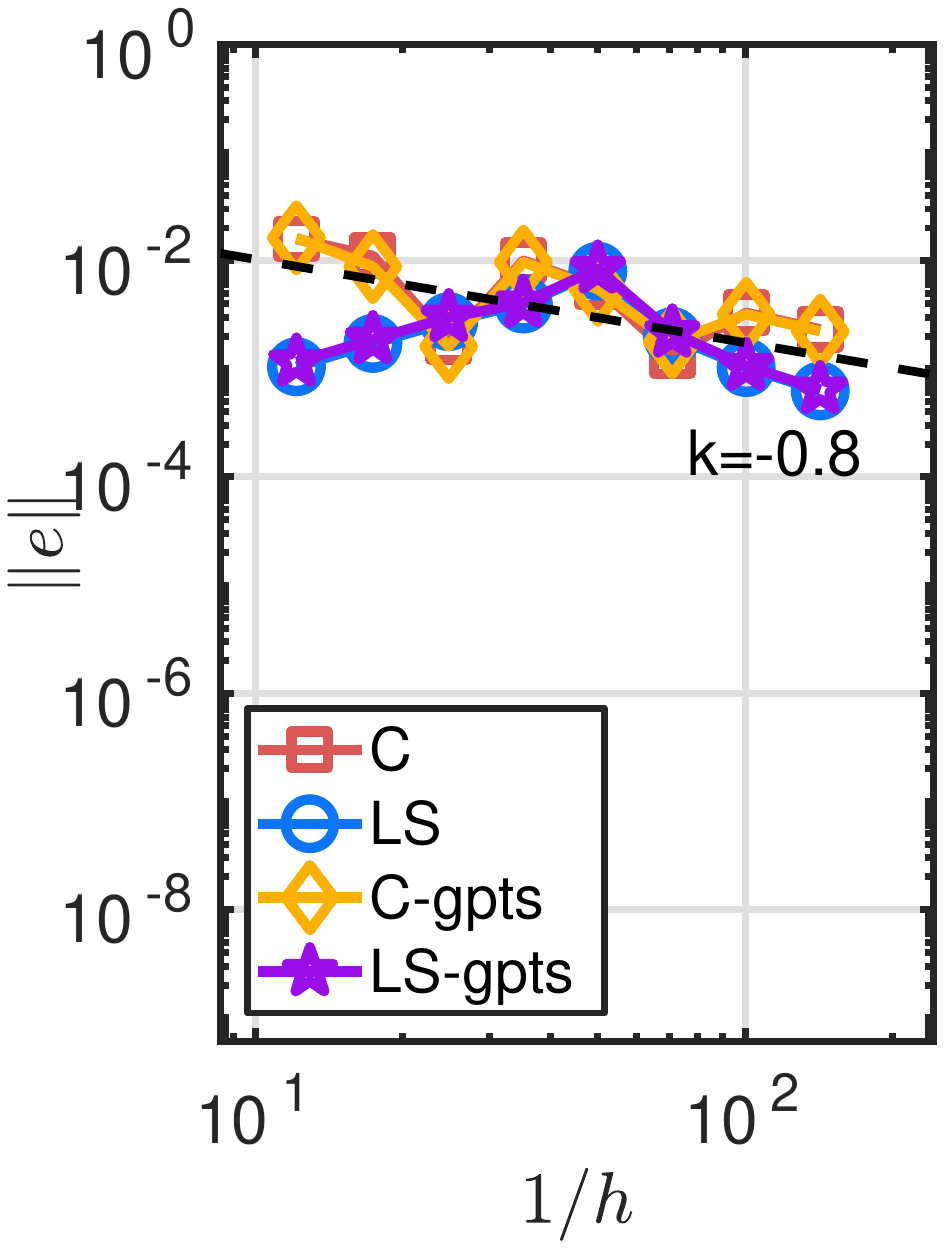} &        
        \includegraphics[width=0.28\linewidth]{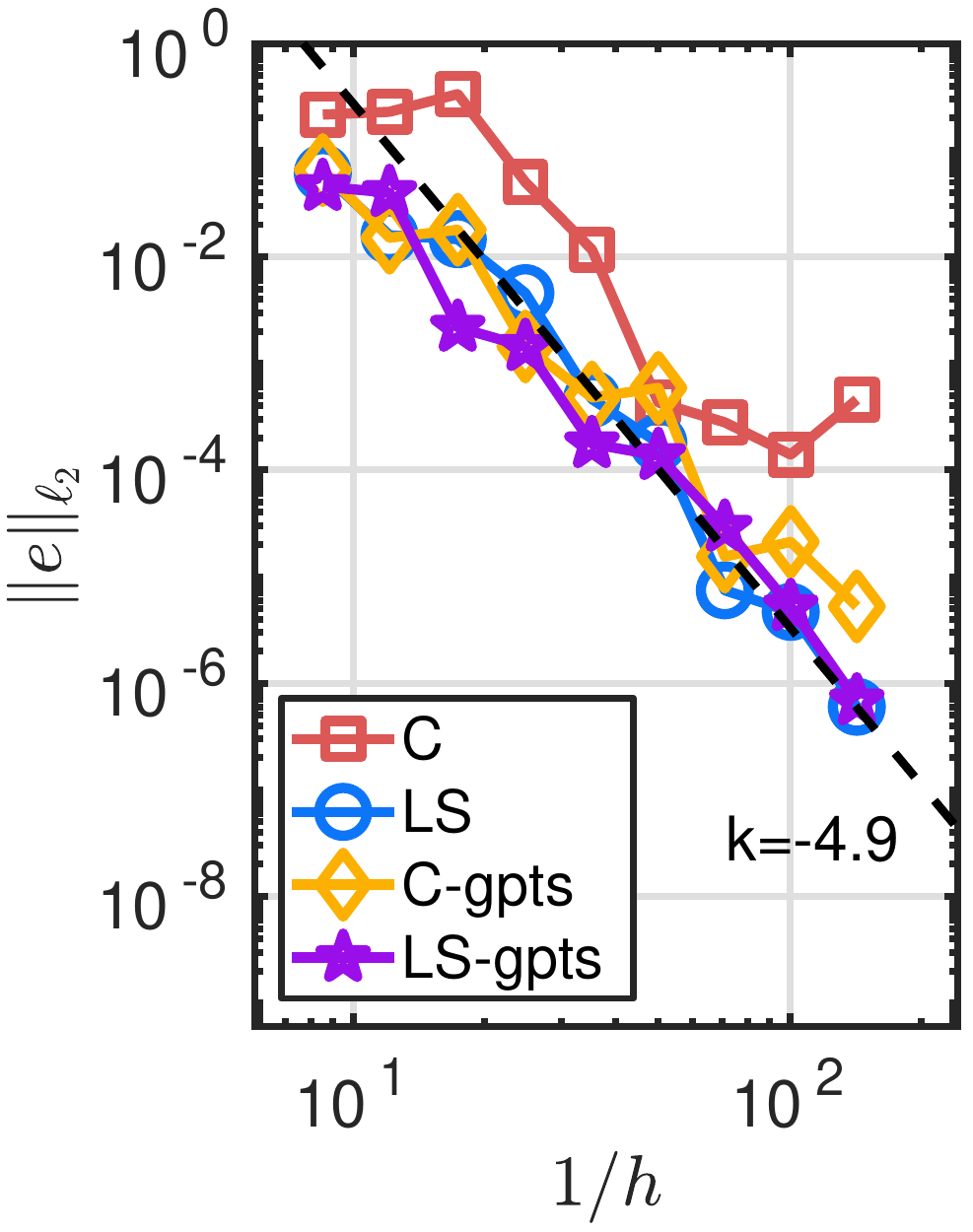} &
        \includegraphics[width=0.28\linewidth]{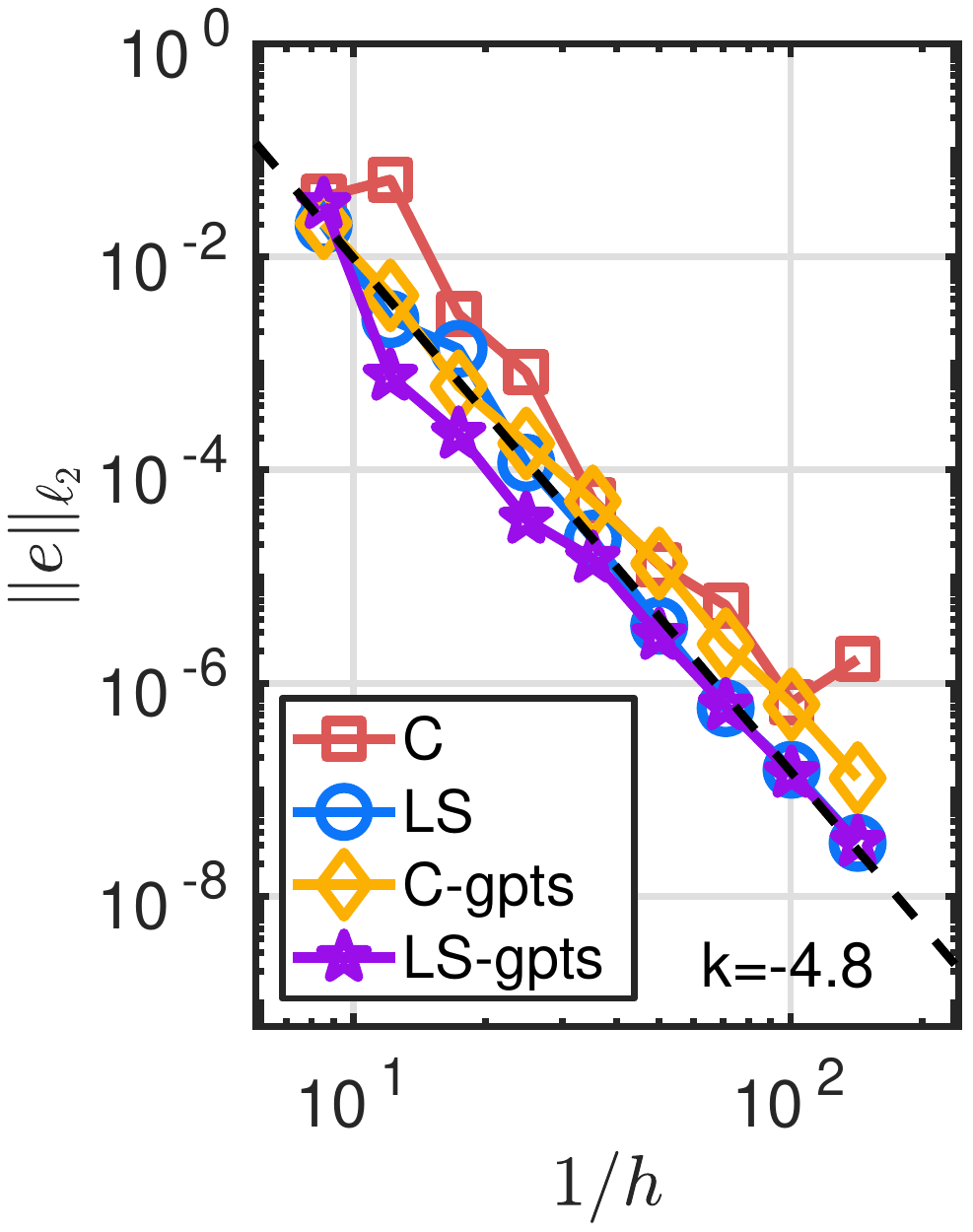}  \\
        \phantom{xxxx}Distance & \phantom{xxxxj}Non-analytic  & \phantom{xxxxj}Rational sine\\
        \includegraphics[width=0.28\linewidth]{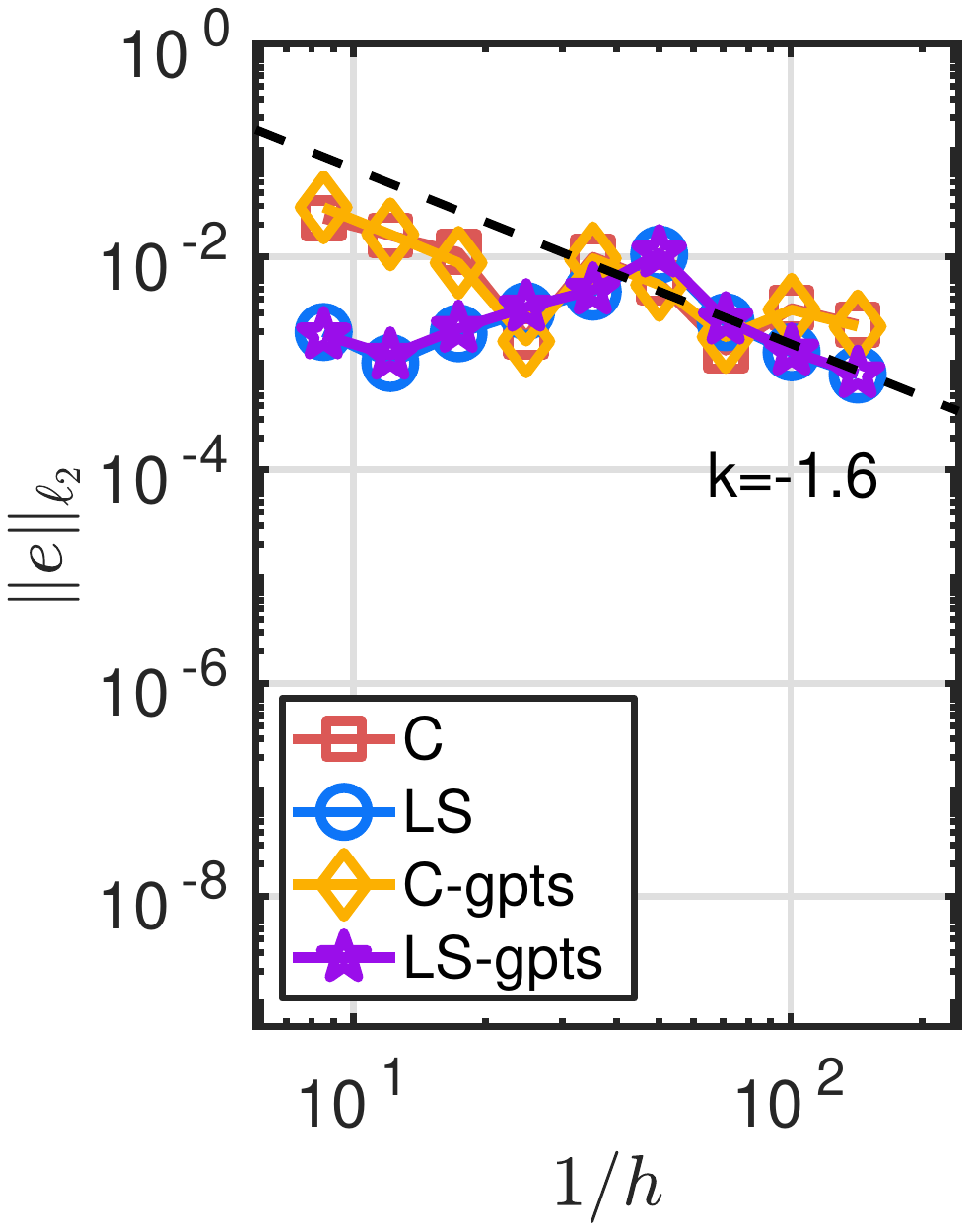} &        
        \includegraphics[width=0.28\linewidth]{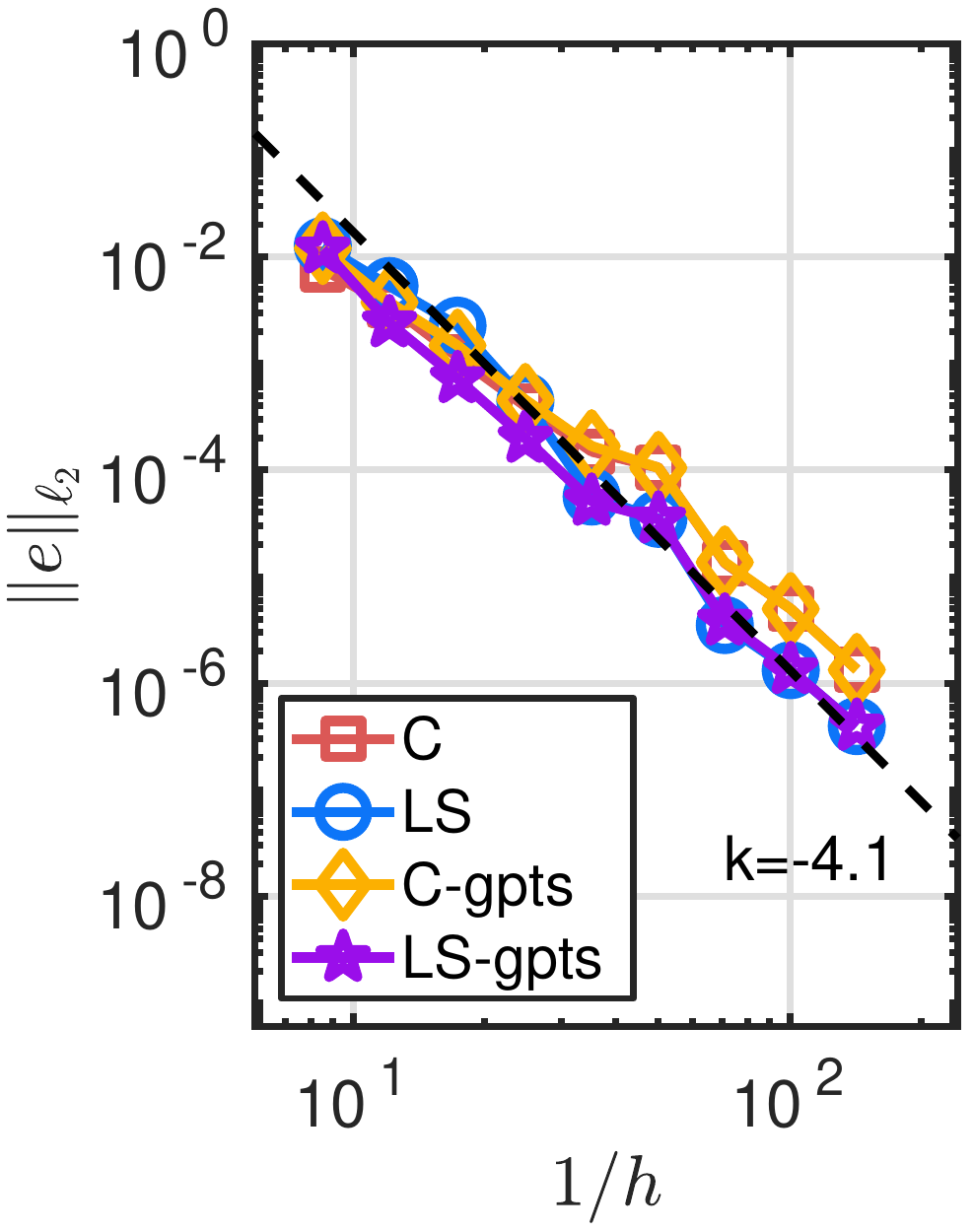} &
        \includegraphics[width=0.28\linewidth]{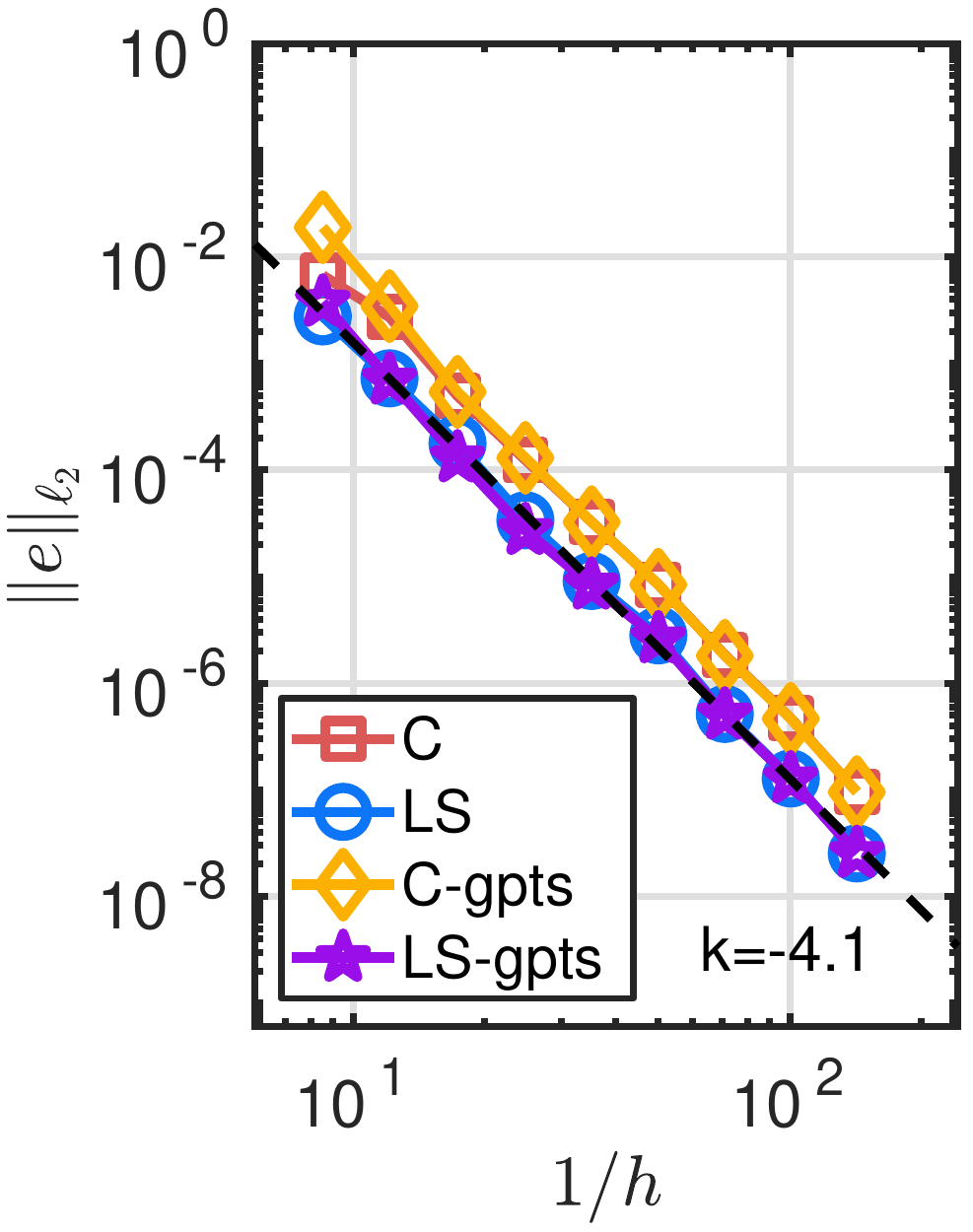}

    \end{tabular}        
    \caption{The errors of RBF-FD-LS and RBF-FD-C as a function 
    of the inverse node distance for the three different solution 
    functions defined in \cref{sec:experiments:h-refinement:differentSolutions}. 
    The first row of plots corresponds to solving \eqref{eq:methods:Poisson} and the second row 
    corresponds to solving the same problem with the Dirichlet condition on the whole boundary.
    The polynomial degree used to construct the interpolation matrices is $p=5$ and the oversampling parameter 
    $q=3$. The number of node points in $X$ ranges from $N=500$ to $N=64000$.}
    \label{fig:experiments:Poisson:h-refinement_differentFunctions:error}
\end{figure}

The accuracy of RBF-FD-LS is better than that of RBF-FD-C for all solution functions when both the Neumann and Dirichlet conditions are present. 
The convergence rates for the truncated Non-analytic and Rational sine functions \new{are $k=4.9$ and $k=4.8$} respectively, which 
agrees with the error estimate \eqref{eq:finalerror} since $k \geq p-1 = 4$. 
\new{When ghost points are utilized, the accuracy is better compared to when no ghost points are utilized. Comparing RBF-FD-LS-Ghost with RBF-FD-C-Ghost we see that the accuracy of the former
is generally better compared with the accuracy of the latter. Next, comparing RBF-FD-C-Ghost to RBF-FD-LS we see that RBF-FD-LS is, in this test, overall more accurate, but the gap is smaller compared with the gap between 
RBF-FD-LS and RBF-FD-C.}
The convergence rate for the Distance function \new{is $k=0.8 < 4$\}}, but that is expected since 
it is a $C^0$ function.

When only the Dirichlet condition is imposed, the accuracy of RBF-FD-LS is better for the Distance and 
Rational sine functions. This is also the case for the truncated Non-analytic function, when $h$ is small enough. 
The difference in error between RBF-FD-LS and RBF-FD-C is not as large as when both the Neumann and Dirichlet conditions are imposed. 
In this case the convergence rates for the truncated Non-analytic function and Rational sine function are $k \geq p-1 = 4$, similarly to the mixed conditions case.
\new{Ghost points in this case do not play a significant role in improving accuracy.}

In the following subsections further experiments are made with the truncated Non-analytic solution function, which, due to its 
fine scale variation, is challenging to approximate.

\subsection{Approximation properties under node refinement}
We refine $h$ (this increases the number of nodes $N$), and measure the approximation properties for different polynomial degrees in the local interpolation matrices \eqref{eq:M}. 
We denote this by $h$-refinement. The convergence as a function of the node 
distance is shown in \cref{fig:experiments:Poisson:h-refinement:error}.
%
\begin{figure}[!htb]
    \centering
    \small
    \begin{tabular}{ccc}
        \hspace{0.075\linewidth}$p=3$ & \hspace{0.075\linewidth}$p=4$  & \hspace{0.075\linewidth}$p=5$\\
        \includegraphics[width=0.28\linewidth]{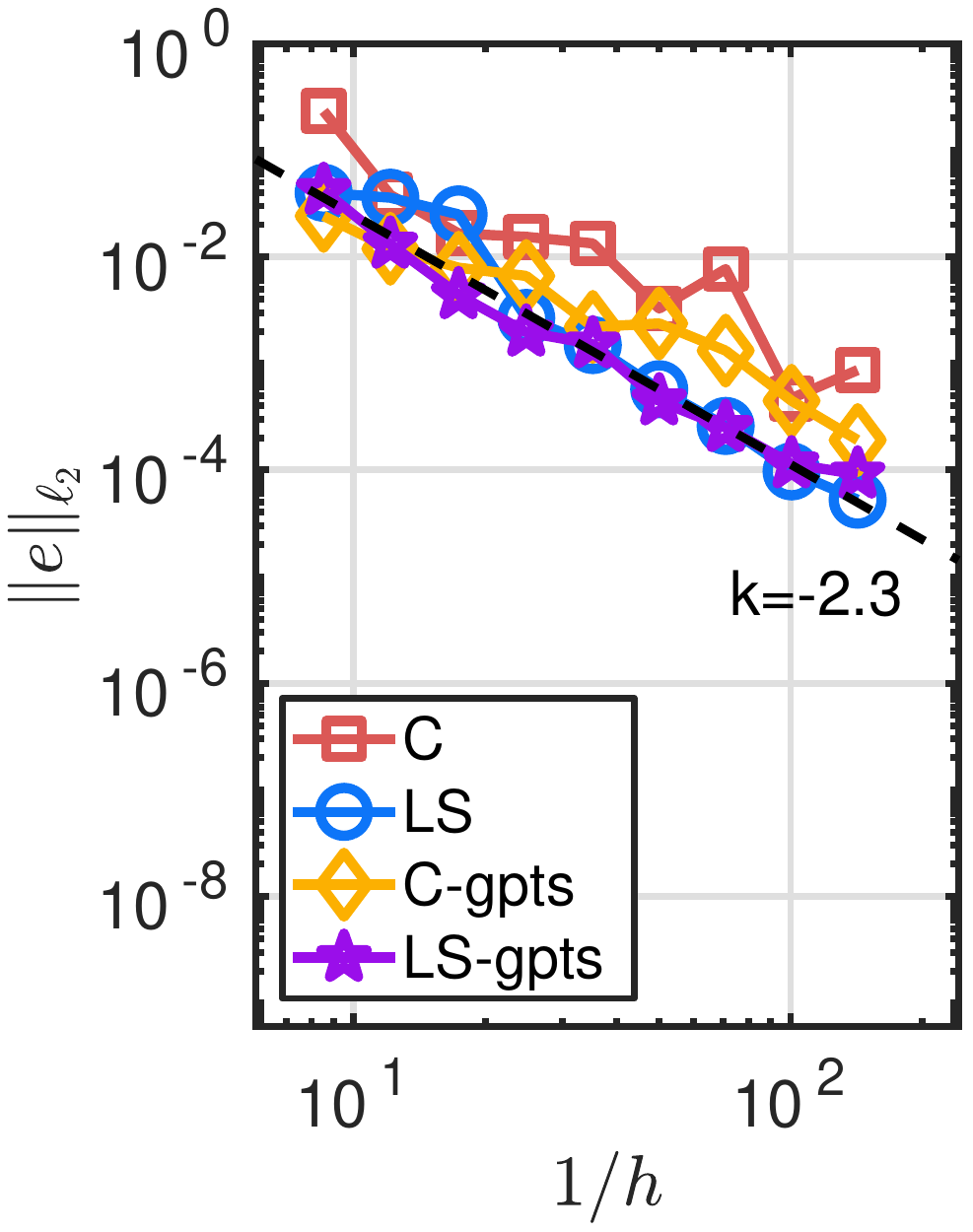} &
        \includegraphics[width=0.28\linewidth]{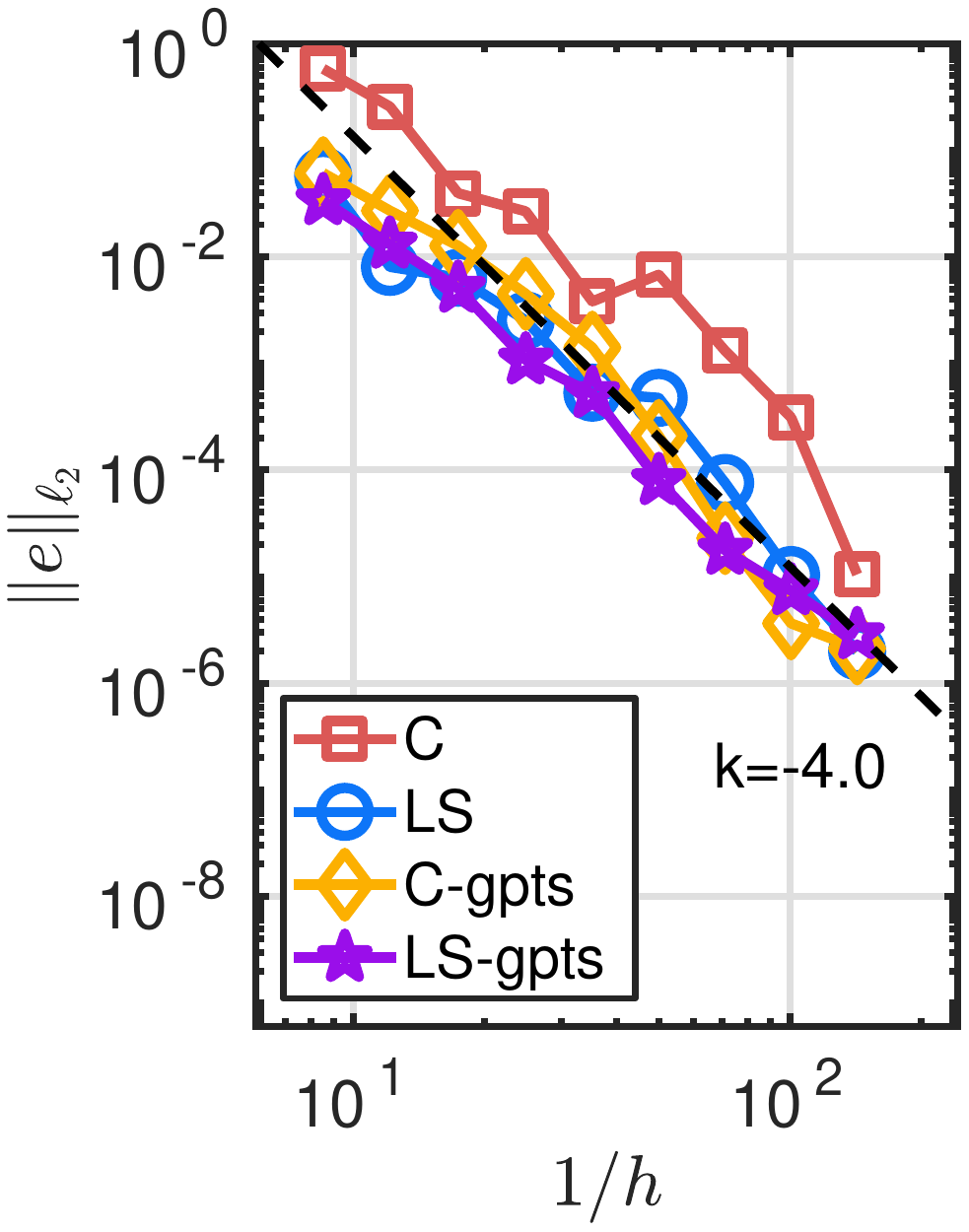} &
        \includegraphics[width=0.28\linewidth]{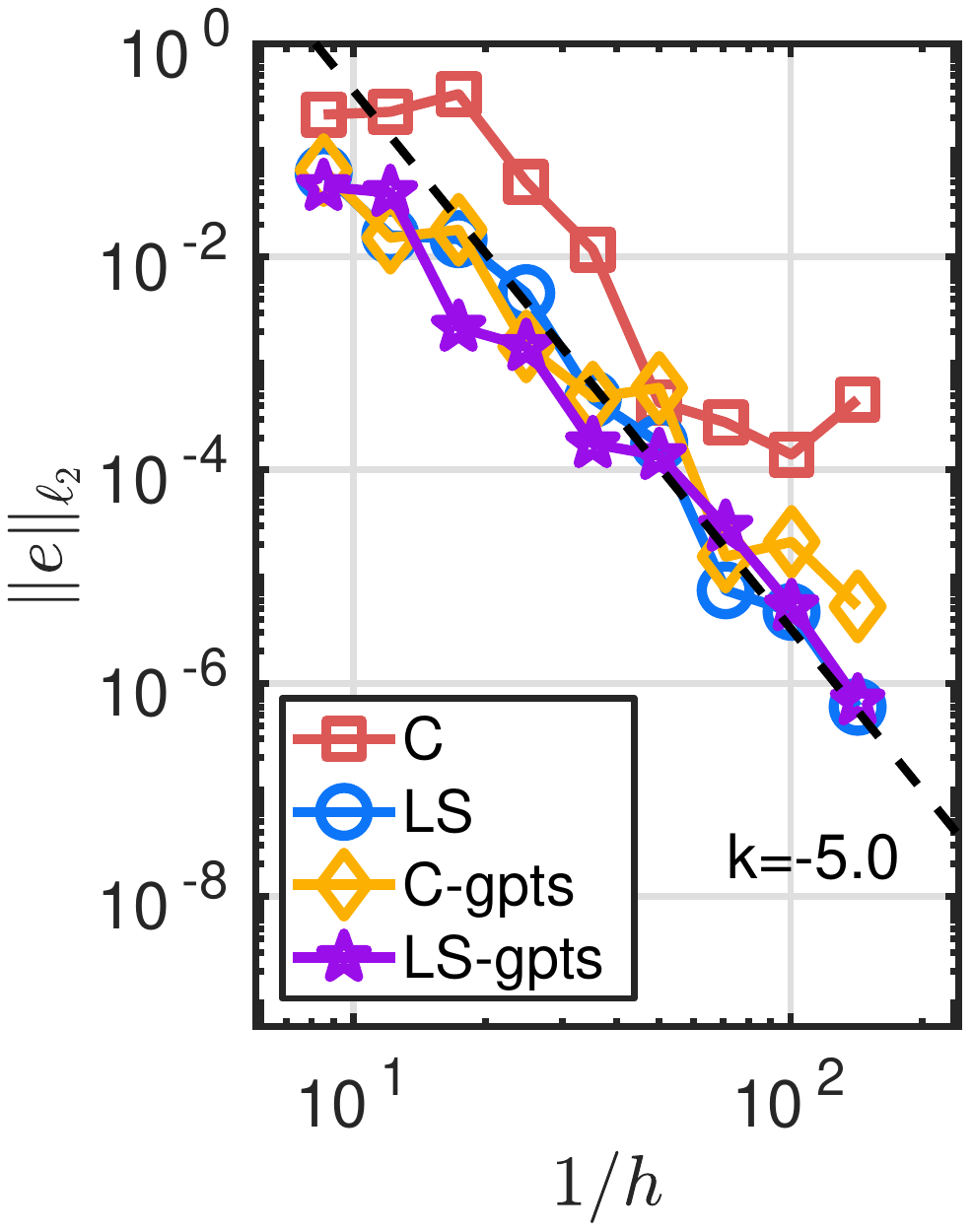}
    \end{tabular}    
    
    \caption{The RBF-FD-LS and RBF-FD-C methods 
    are compared. The relative error as a function 
    of the inverse node distance $1/h$ for a fixed oversampling parameter $q=3$ 
    and different polynomial degrees $p$ used to form the local interpolation matrices is shown.}
    \label{fig:experiments:Poisson:h-refinement:error}
\end{figure}
\noindent
We observe that the accuracy of RBF-FD-LS is better for each tested $p$ compared with RBF-FD-C. 
The overall difference in the errors is larger for $p=5$ compared with when $p=3$ and $4$.
The convergence trend $k$ of RBF-FD-LS is $k\geq p-1$ for every $p$. 
It is hard to evaluate the convergence trend of RBF-FD-C since the error behavior is unpredictable.
\new{The accuracy of RBF-FD-LS-Ghost is overall better for all $p$ compared with RBF-FD-C-Ghost. The gap between RBF-FD-LS and RBF-C-Ghost is 
smaller compared with the gap between RBF-FD-LS and RBF-FD-C, and in some points, RBF-FD-C-Ghost is more accurate than RBF-FD-LS.}

Next, the relation 
between the error and the computational time (runtime) is investigated. 
It is important to note that a method with a smaller error/runtime ratio is more efficient.
\new{The runtime is divided into two steps shown in~\cref{fig:experiments:Poisson:h-refinement:errorVsRuntime_matrices} and~\cref{fig:experiments:Poisson:h-refinement:errorVsRuntime_solving}:
\begin{itemize}
\item[$R_1:$] The closest neighbor search, forming and inverting the local interpolation matrices \eqref{eq:M}, and
forming the evaluation and differentiation weights \eqref{eq:locdiff}.
\item[$R_2:$] Assembly of the PDE operator \eqref{eq:discretePDE}, and solution of the system of equations using \emph{mldivide()} in MATLAB.
\end{itemize}
}
\noindent
The node generation is considered as a preprocessing step and is therefore not included in the measurement. 
\begin{figure}[!htb]
  \centering
  \small
    \begin{tabular}{ccc}
        \phantom{xxxx}$p=3$ & \phantom{xxxx}$p=4$  & \phantom{xxxx}$p=5$\\
        \includegraphics[width=0.28\linewidth]{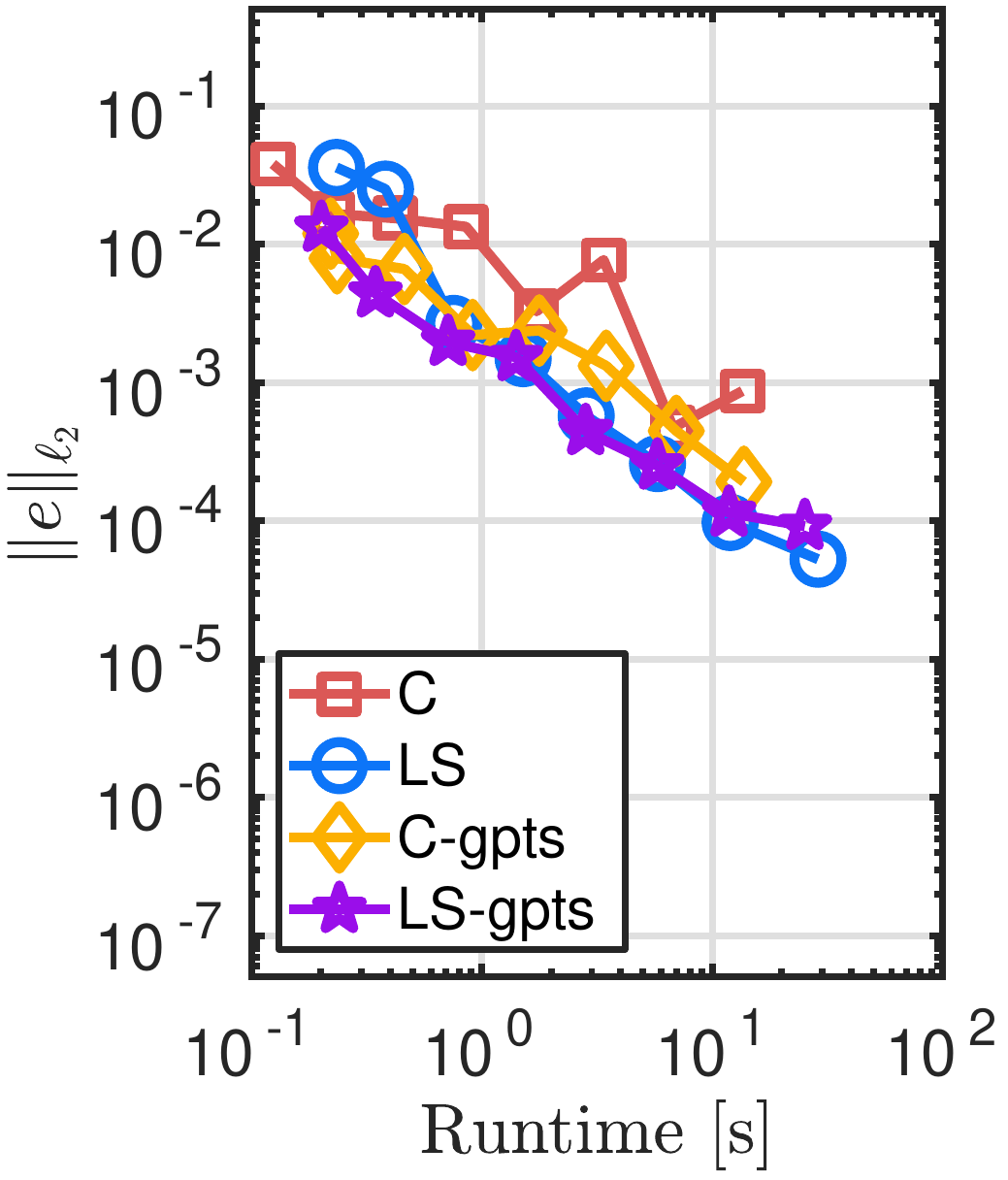} &
        \includegraphics[width=0.28\linewidth]{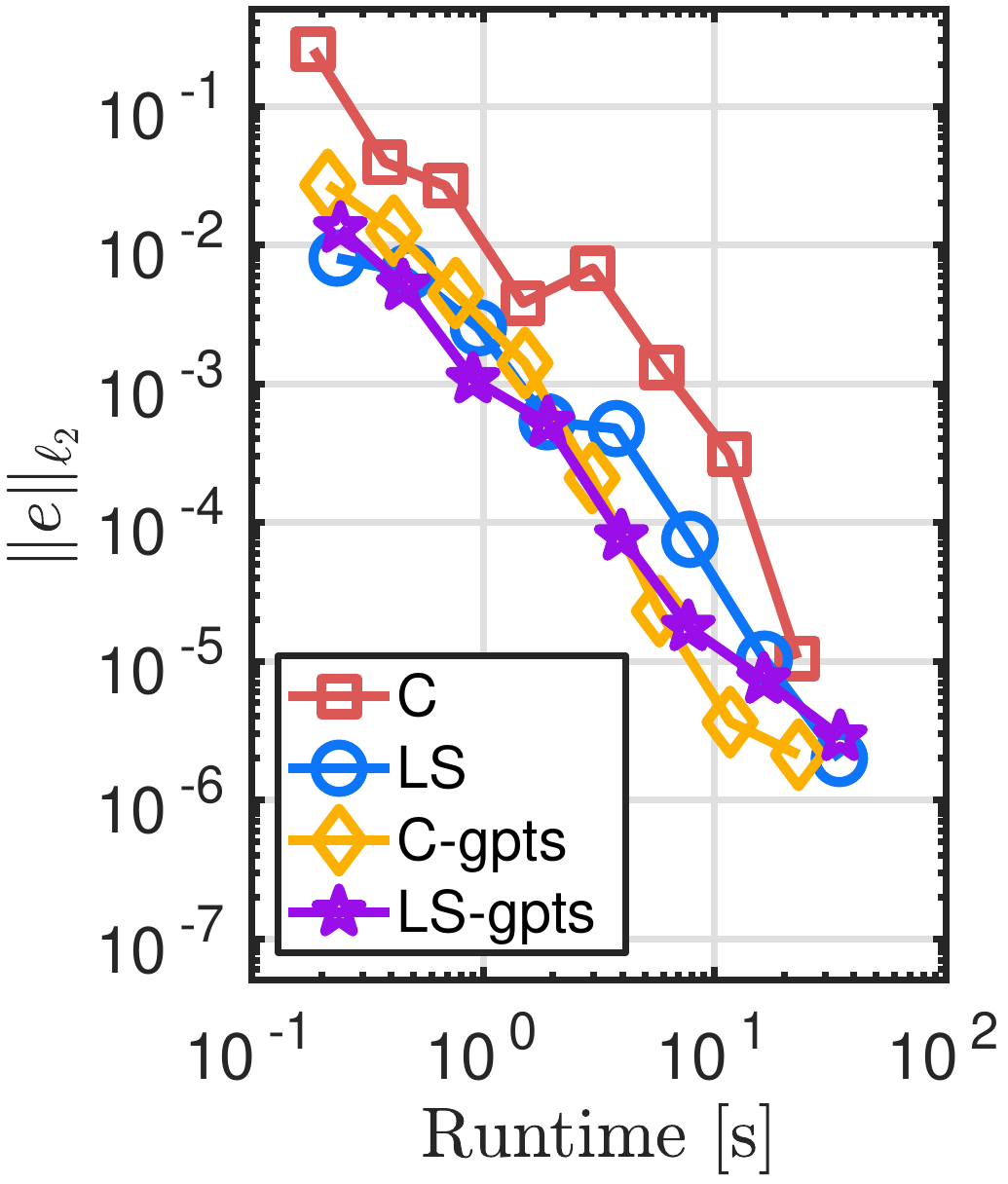} &
        \includegraphics[width=0.28\linewidth]{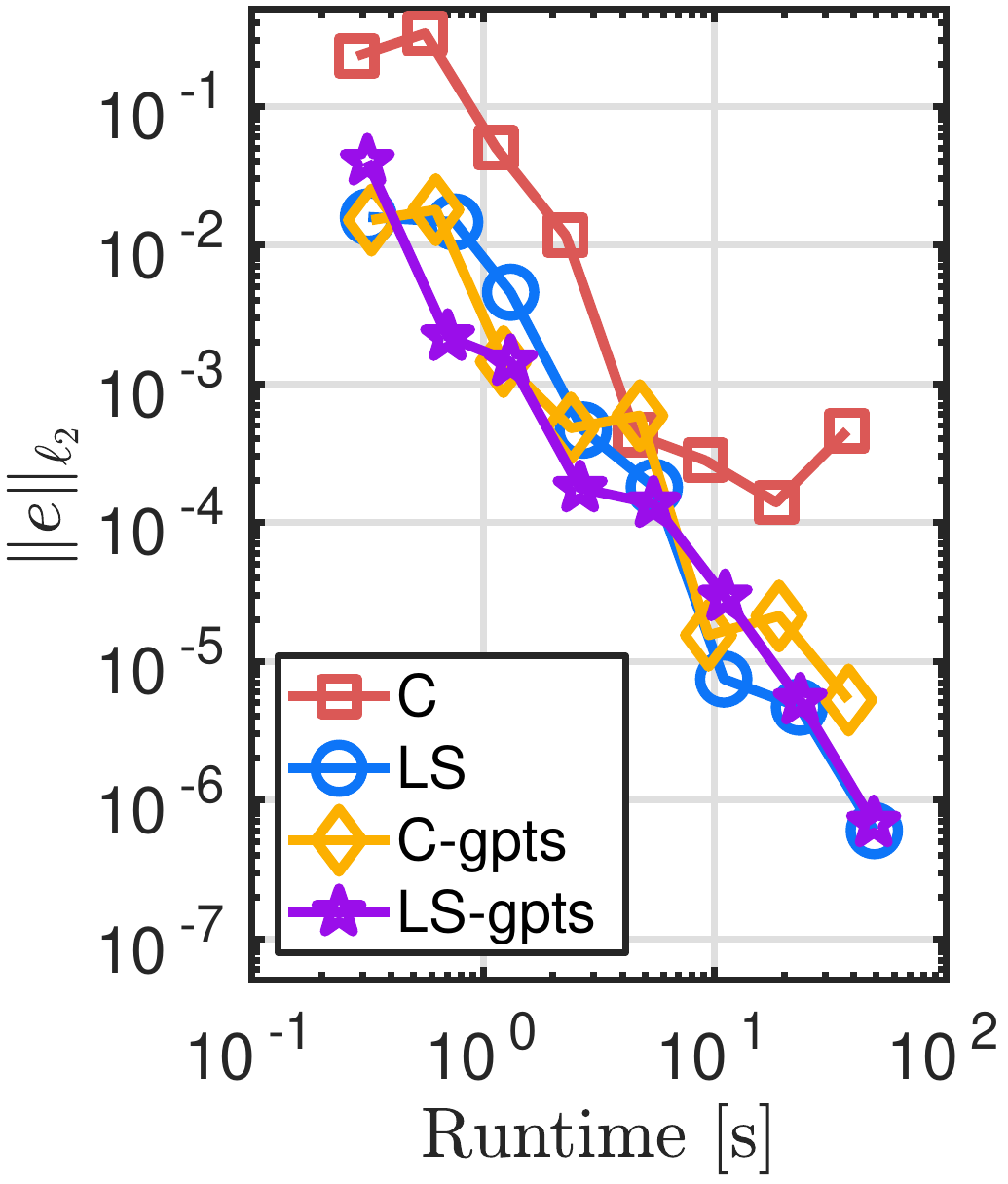}        
    \end{tabular}
    
    \caption{The RBF-FD-LS and RBF-FD-C methods are compared. The relative error as a function of the initialization runtime ($R_1$) measured in seconds for a fixed oversampling parameter $q=3$ 
    and different polynomial degrees $p$ used to form the local interpolation matrices. }
    \label{fig:experiments:Poisson:h-refinement:errorVsRuntime_matrices}
\end{figure}
\begin{figure}[!htb]
  \centering
  \small
  \begin{tabular}{ccc}
    \phantom{xxxx}$p=3$ & \phantom{xxxx}$p=4$  & \phantom{xxxx}$p=5$\\
        \includegraphics[width=0.28\linewidth]{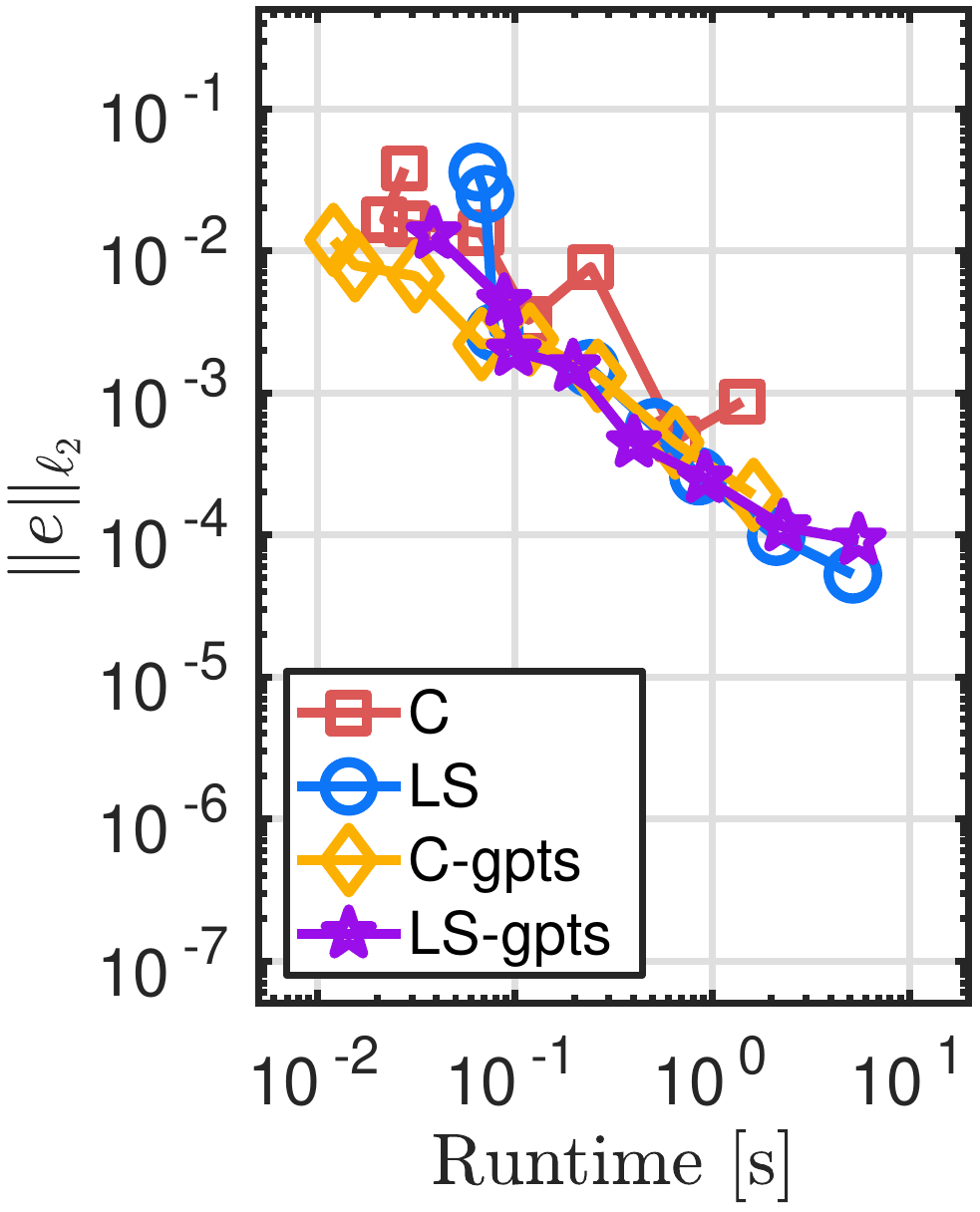} &
        \includegraphics[width=0.28\linewidth]{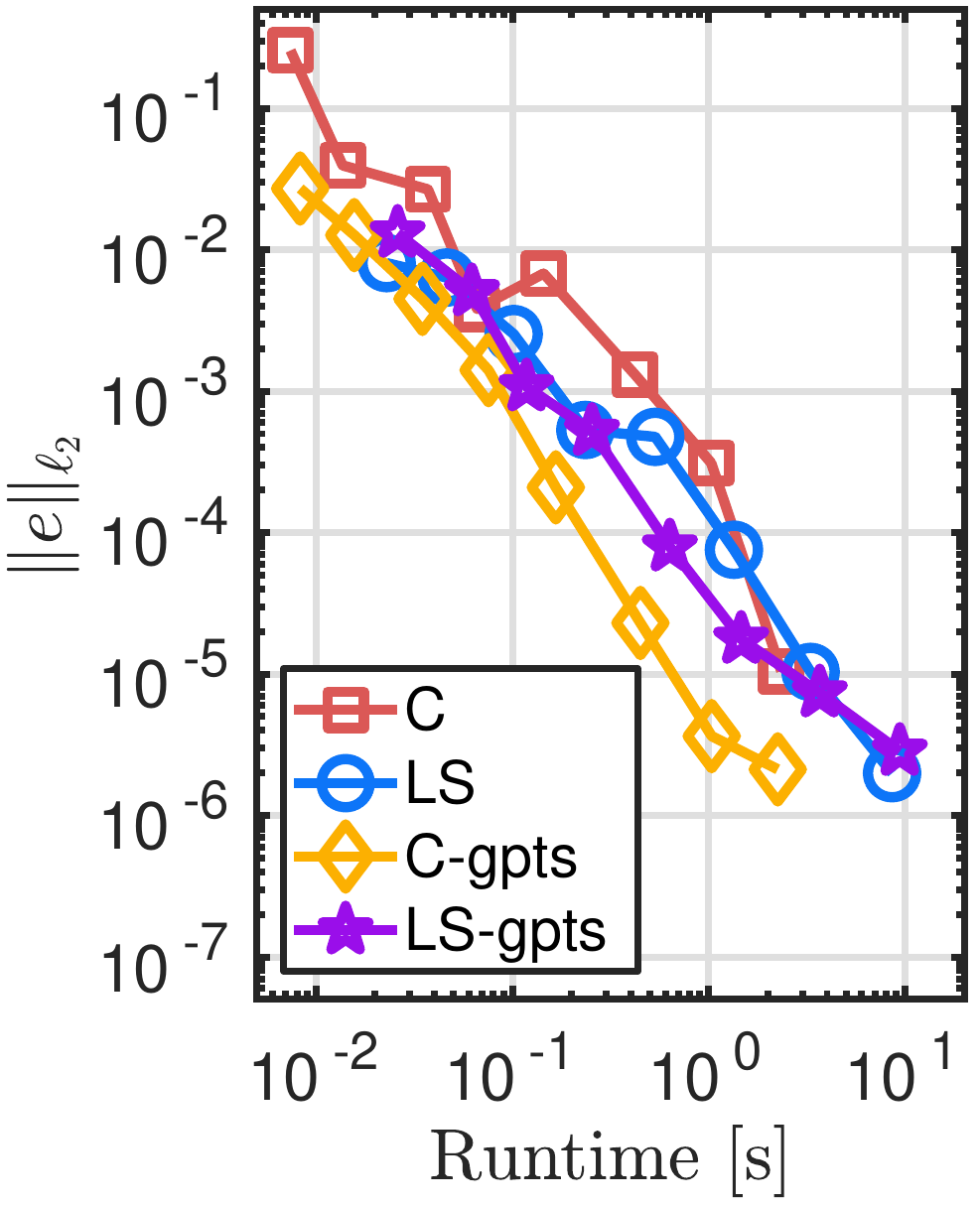} &
        \includegraphics[width=0.28\linewidth]{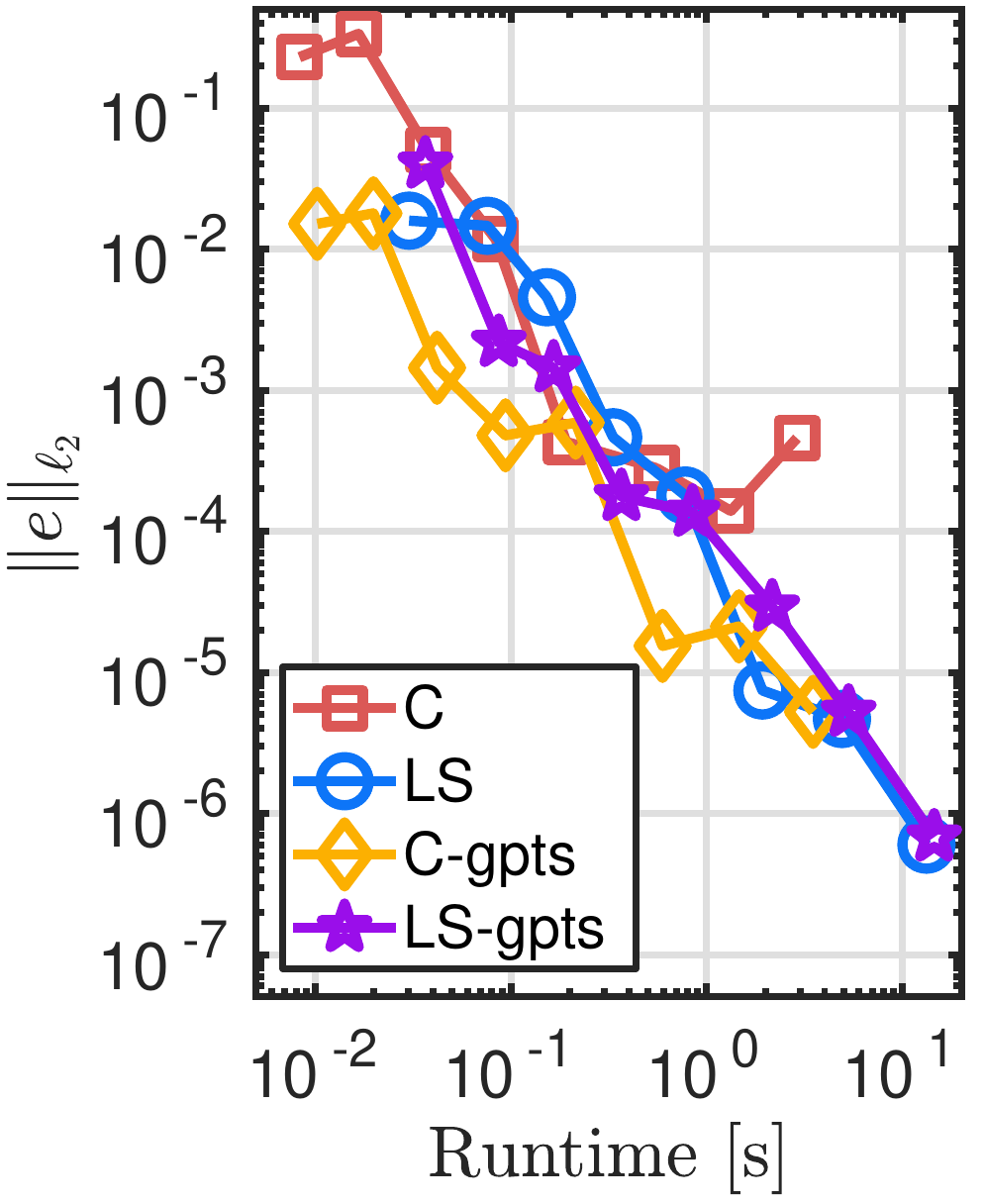}
    \end{tabular}
    
    \caption{The RBF-FD-LS and RBF-FD-C methods are compared. The relative error as a function of the solving runtime ($R_2$) measured in seconds for a fixed oversampling parameter $q=3$ 
    and different polynomial degrees $p$ used to form the local interpolation matrices. }
    \label{fig:experiments:Poisson:h-refinement:errorVsRuntime_solving}
\end{figure}

We observe that the efficiency of RBF-FD-LS is better than that of RBF-FD-C for all considered $p$ and both efficiency measurements: $R_1$ and $R_2$. 
When $p=5$, the difference in the efficiency is larger. 
The oversampling parameter $q$ does not have a decisive role when it comes to the efficiency. We expect the run-time to be dominated by the solution of the overdetermined linear system. For a dense matrix, the cost grows linearly with $M=qN$ for a fixed $N$. We expect a similar behavior for our sparse system.
The added cost is compensated for by the improved accuracy. \cref{fig:experiments:Poisson:q-refinement:error} shows the error improvement with $q$.

\new{RBF-FD-C-Ghost behaves similarly to RBF-FD-LS and RBF-FD-LS-Ghost concerning the efficiency $R_1$. In the $R_2$ case, RBF-FD-C-Ghost outperforms all other methods; however, 
the magnitude of the runtime in $R_2$ is at least one order smaller compared with $R_1$. By observing the efficiency as a function of to the total runtime $R_1+R_2$, the result of $R_1$ is dominating. 
Thus, the three 
methods overall behave similarly in terms of efficiency.}

The stability norm~\eqref{eq:stabilitynorm} as a function of $1/h$ is studied in \cref{fig:experiments:Poisson:h-refinement:stabilityNorm}.
\begin{figure}[!htb]
    \centering
    \small
    \begin{tabular}{@{\hspace{2mm}}c @{\hspace{2mm}}c @{\hspace{2mm}}c}
        \hspace{0.075\linewidth}$p=3$ & \hspace{0.075\linewidth}$p=4$  & \hspace{0.075\linewidth}$p=5$\\
        \includegraphics[width=0.28\linewidth]{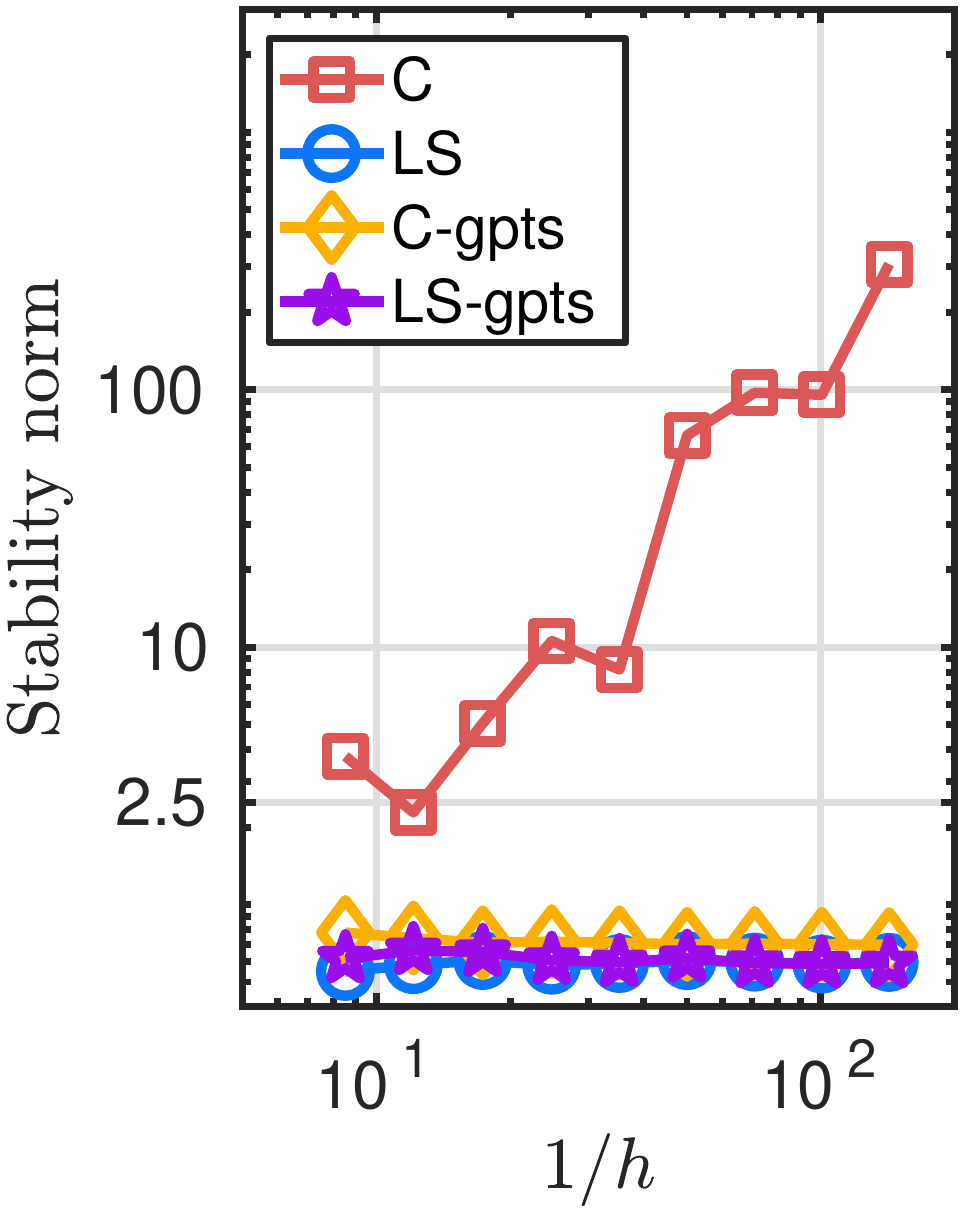} &
        \includegraphics[width=0.28\linewidth]{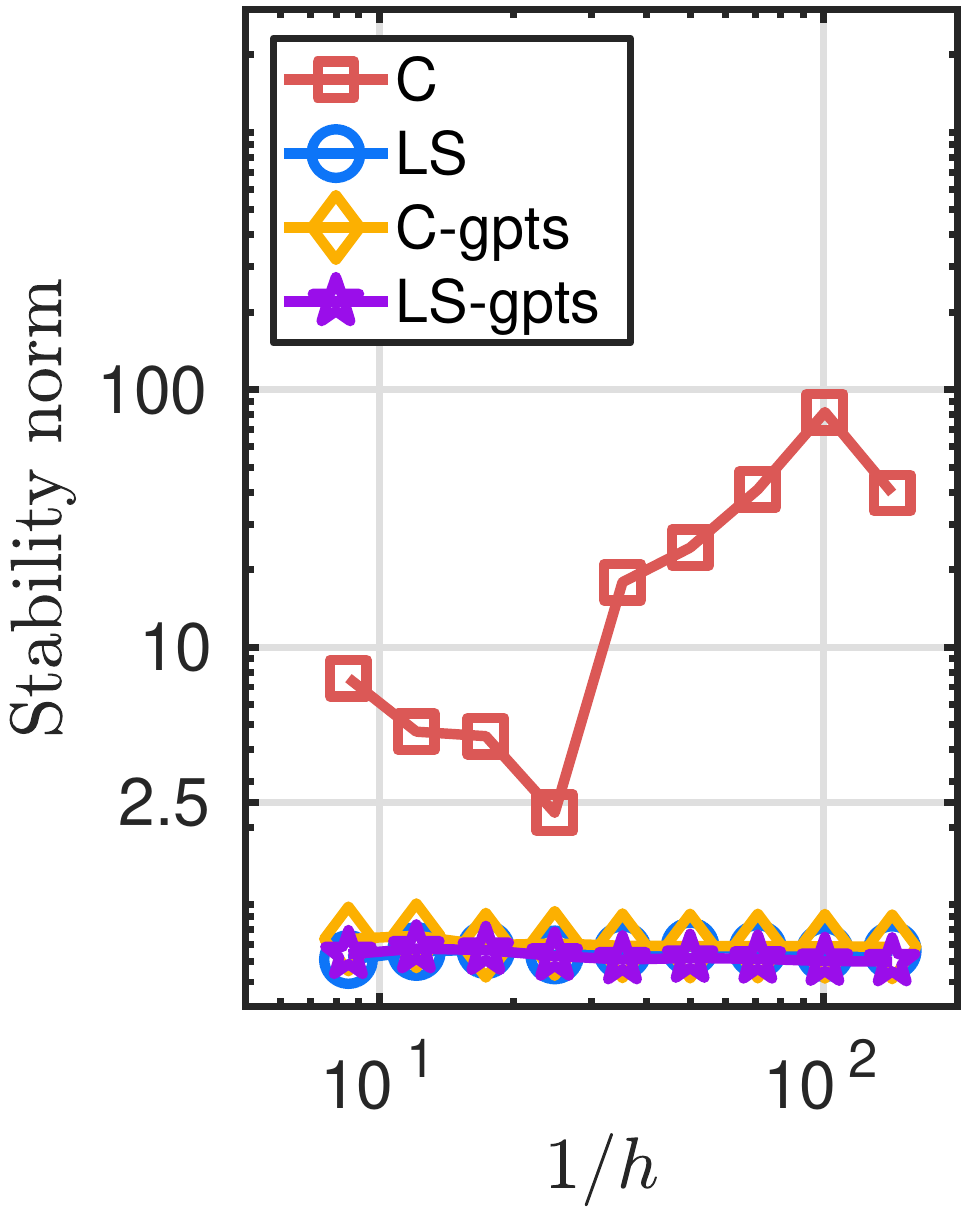} &
        \includegraphics[width=0.28\linewidth]{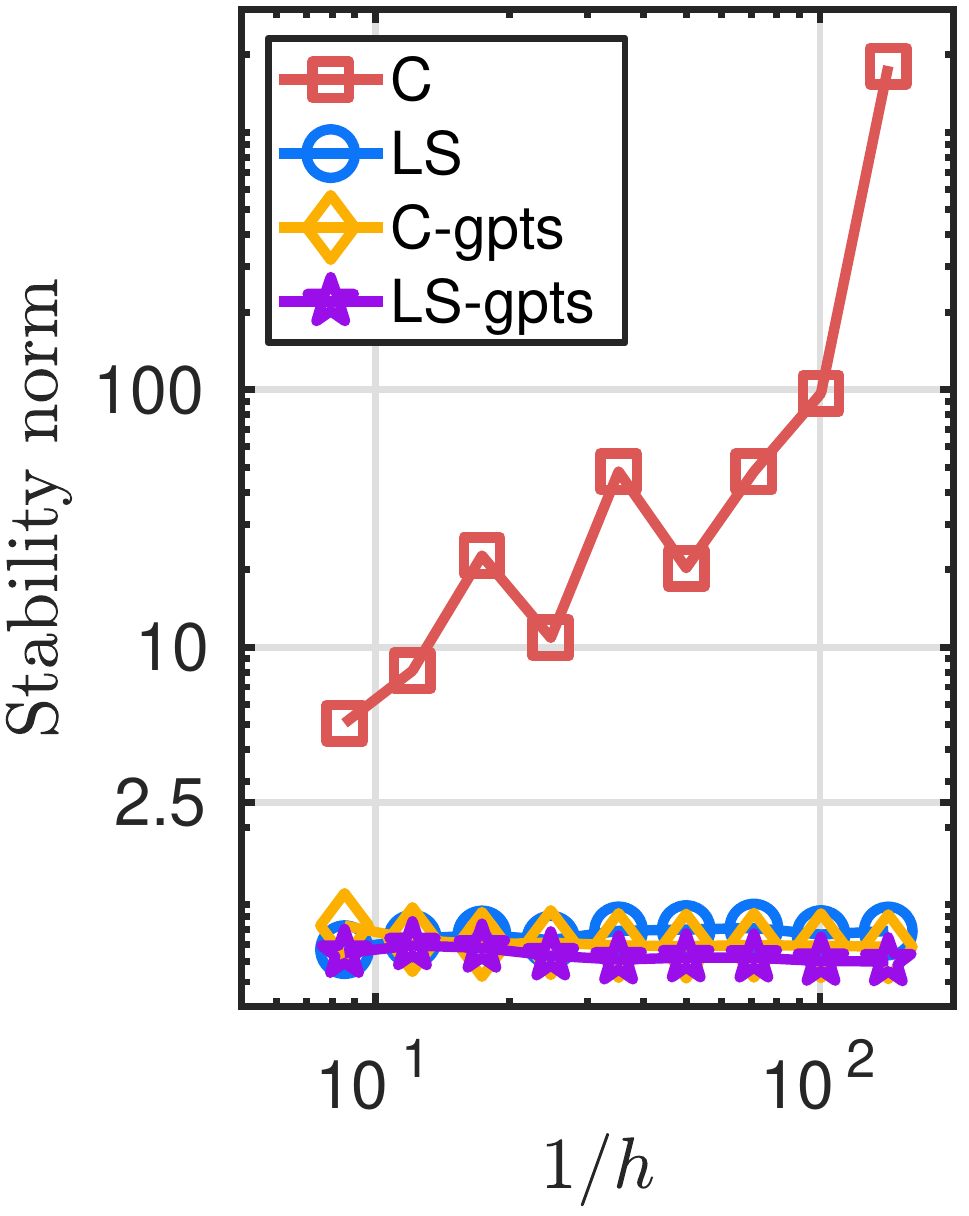}
    \end{tabular}    
    
    \caption{The stability norm \eqref{eq:stabilitynorm} as a function of $1/h$ when the oversampling parameter 
    is $q=3$ for different polynomial degrees $p$.}
    \label{fig:experiments:Poisson:h-refinement:stabilityNorm}
\end{figure}
\noindent
We observe that the stability norm of RBF-FD-LS is almost constant for all polynomial degrees $p$ which we considered.
This corresponds with the error estimate \eqref{eq:finalerror}. When $h_y=h/\sqrt{q}$, the integration error $\tau$ goes to zero as $h/q$, and the factor $\frac{\sqrt{1+\tau}}{\sqrt{1-\tau}}$ in the stability constant $C_h$ approaches 1.
The stability norm of the RBF-FD collocation method does not follow a pattern for the given PDE, parameters and node sets.
Here we emphasize that this behavior is not caused by the RBF-FD trial space, but rather by the collocation formulation in which the 
PDE is solved. \new{An interesting behavior is observed in the RBF-FD-C-Ghost case, where the stability norm is constant. 
While it is possible 
to attribute that to the imposition of ghost points, we argue that this occurs only due to the 
imposition of the extra Laplacian condition on the boundary points, which in turn 
introduces a stronger control over $\|\Delta u_h\|_{\ell_2}$: a key factor when it comes to the invertibility of $\bar D_h$ and thus the stability norm. 
To confirm this claim, we made a side experiment. The extra Laplacian condition was enforced at the boundary points, but no ghost points were used and no oversampling was employed. The resulting 
system of equations was rectangular only due to the imposition of the extra Laplacian conditions. 
The stability norm remained constant.}

The condition number of a rectangular or square matrix $A$ is defined by
$\kappa(A) = \|A\|_2\, \|A^+\|_2 = \sigma_{max}(A)/\sigma_{min}(A)$.
In \cref{fig:experiments:Poisson:h-refinement:conditionNumbers} we show the condition numbers for the two matrices involved in RBF-FD-LS: $\bar{D}_h$ 
and $E_h$.
\begin{figure}[!htb]
  \centering
  \small
    \begin{tabular}{cc}
        \hspace{0.06\linewidth}$\kappa(\bar{D}_h)$ & \hspace{0.06\linewidth}$\kappa(E_h)$ \\
        \includegraphics[width=0.35\linewidth]{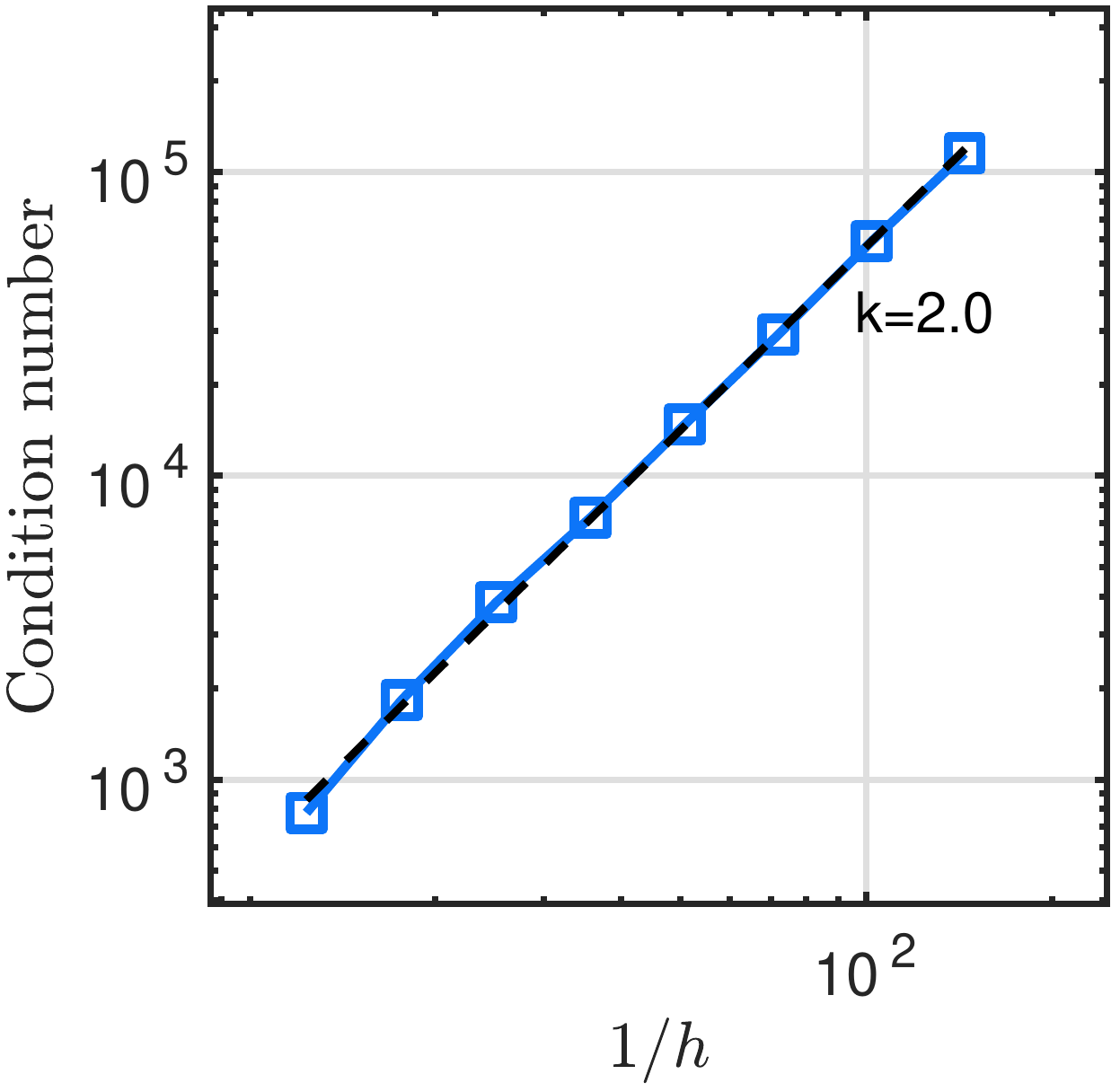} &
        \includegraphics[width=0.35\linewidth]{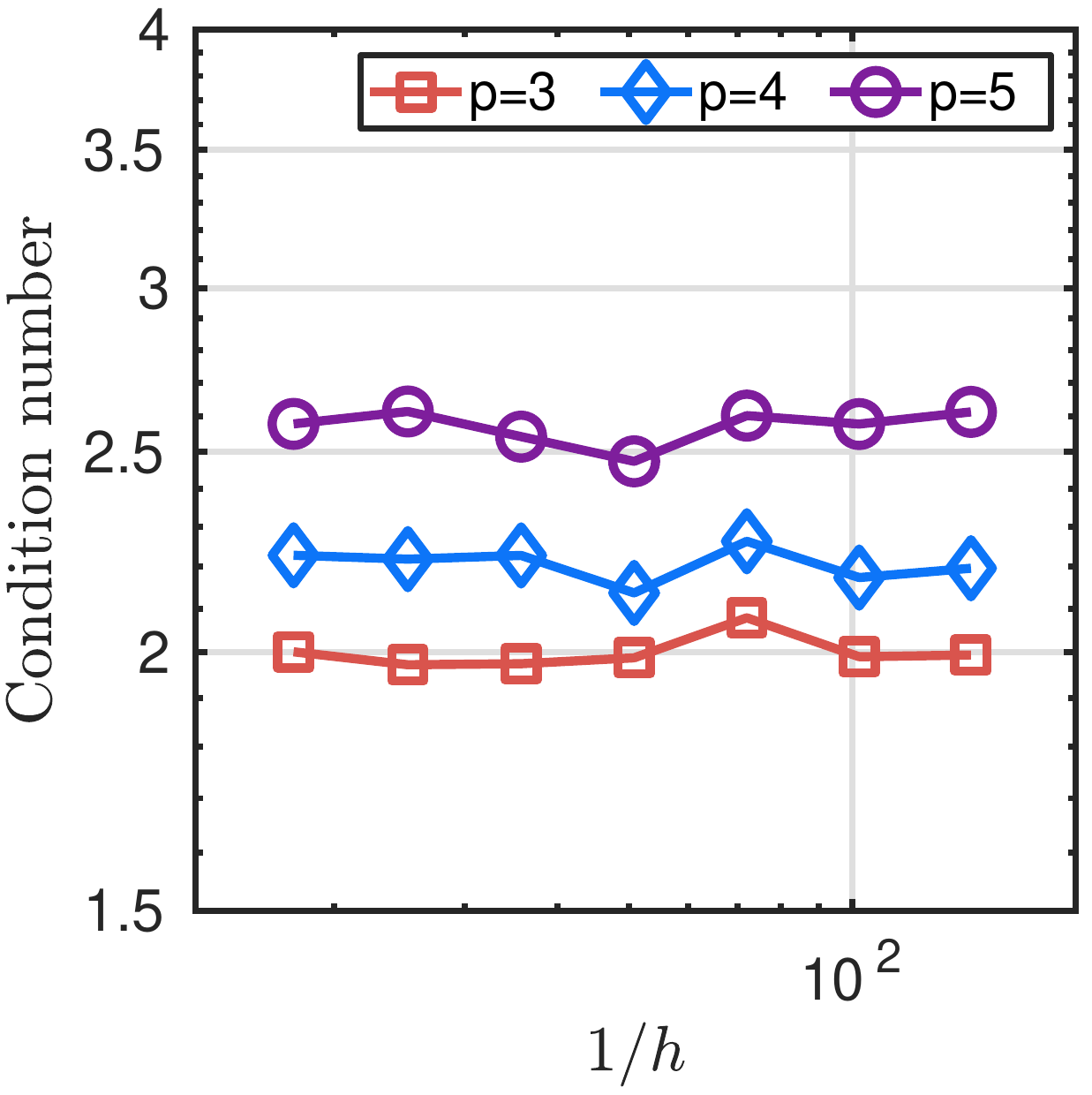} 
    \end{tabular}
    
    \caption{Left: The condition number of the PDE matrix $\bar{D}_h$ as a function 
    of the inverse node distance when the polynomial degree used for representing the trial space is $p=3$. 
    Right: The condition number of the evaluation matrix $E_h$ when the polynomial degrees used for representing the trial space are $p=3,\,4$ and $5$.}
    \label{fig:experiments:Poisson:h-refinement:conditionNumbers}
\end{figure}
We observe that $\kappa(\bar{D}_h)$ grows with $\frac{1}{h^2}$ for $p=3$. The results are almost identical for $p=4$ and $5$. 
This is an expected (optimal) growth, since $\bar{D}_h$ is a numerical second-order differentiation operator, 
which has an inverse quadratic dependence on $h$ when the stencil size is kept constant.
On the other hand $\kappa(E_h)$ is constant with respect to $h$ for all $p$, which is also an expected result, 
since $E_h$ is a numerical interpolation operator, which does not by itself yield a dependence on $h$ when the stencil 
size is kept constant.

\subsection{Approximation properties as the oversampling is increased}
\label{sec:Experiments:hy-refinement}
In this section we build understanding of the error and stability behavior for different choices of $h_y=h/\sqrt{q}$, when $h$ is fixed at $h=0.08$ (under-resolved case) and at $h=0.02$ (well-resolved case).
Three polynomial degrees $p=3,\,4$ and $5$ are used for the local interpolation matrices \eqref{eq:M}. 
The exact solution is chosen to be the truncated Non-analytic function \eqref{eq:experiments:nonanalytic}. 
%
The convergence study \new{for RBF-FD-LS} is displayed in \cref{fig:experiments:Poisson:q-refinement:error}. 
For both the under-resolved and well-resolved cases, the error \new{decays} and then levels out as $h_y$ becomes small enough. 
This behavior matches the error estimate \eqref{eq:finalerror} for the case when $h$ is fixed. As $h_y \to 0$ the term
        $\big(\frac{1+\tau}{1 - \tau}\big)^{\frac{1}{2}}\to 1$ from a larger value, and therefore $\|e\|_{\ell_2\new{(\Omega)}}$ levels out.
\begin{figure}[!htb]
  \centering
  \small
    \begin{tabular}{cc}
        \hspace{0.06\linewidth}$h=0.08$ & \hspace{0.06\linewidth}$h=0.02$ \\
        \includegraphics[height=0.35\linewidth]{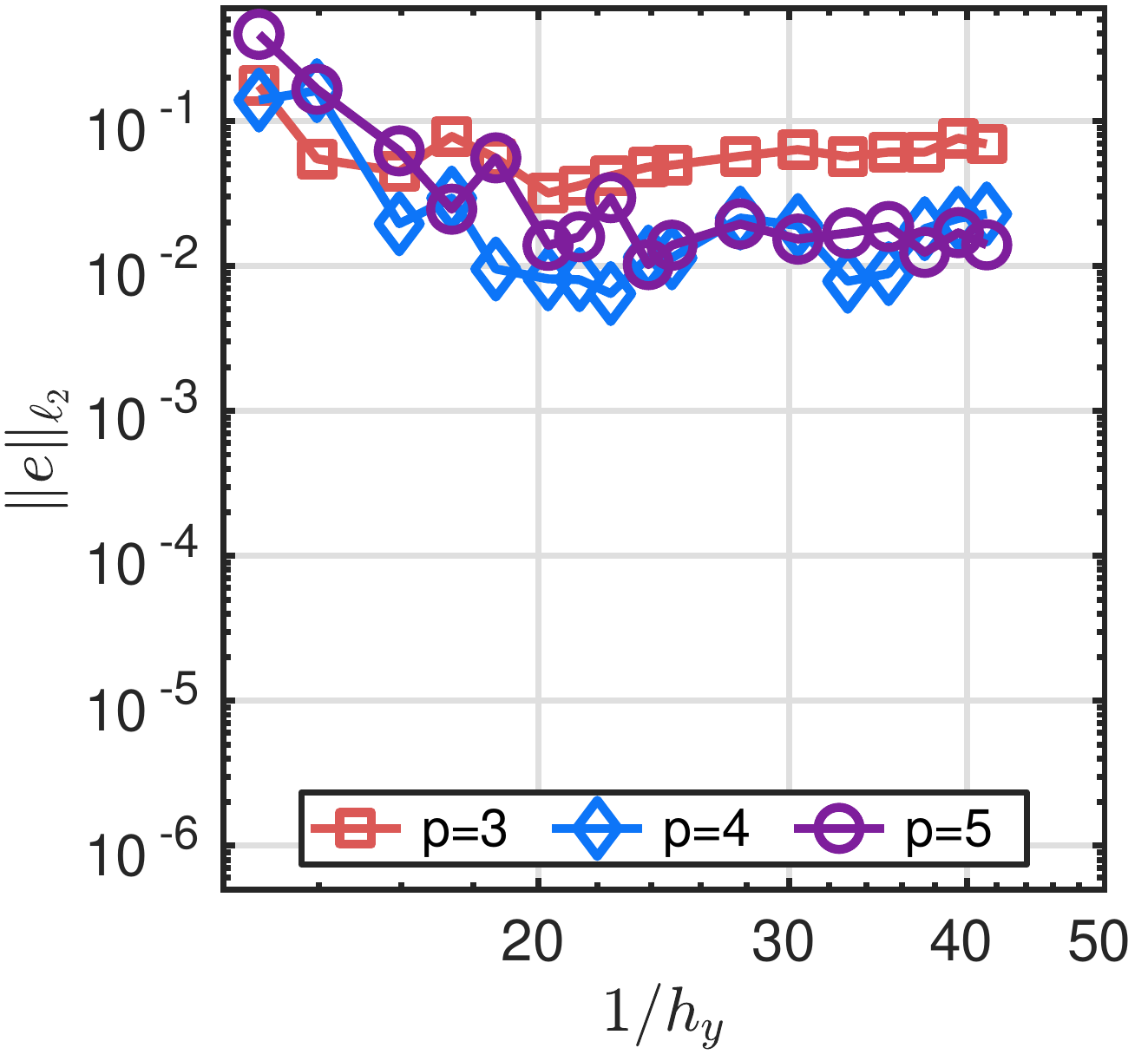} &
        \includegraphics[height=0.35\linewidth]{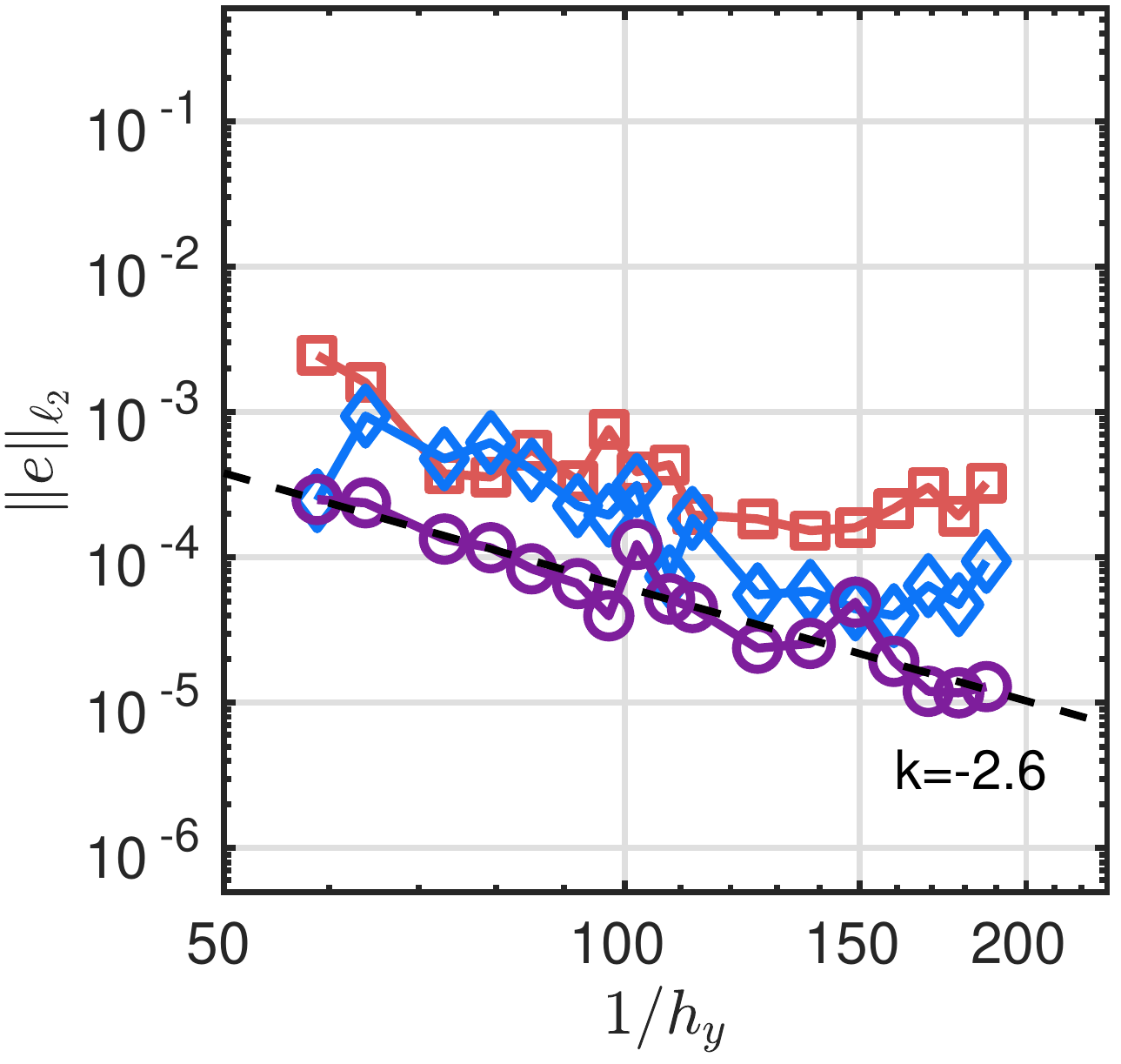} 
    \end{tabular}

    \caption{The error against $1/h_y$ which is the average distance in the $Y$ node set 
    for different choices of the polynomial degree $p$ used to form the trial space.
    The average internodal distance in the node set $X$, was for the left plot fixed at $h=0.08$ and for the right plot fixed and $h=0.02$.
    The values of $h_y$ were computed from 
    $q=(1.1$, $1.3,$ $1.7,$ $2,$ $2.3,$ $2.7,$ $3,$ $3.3,$ $3.7,$ $4,$ $5,$ $6,$ $7,$ $8,$ $9,$ $10,$ $11)$.}
    \label{fig:experiments:Poisson:q-refinement:error}
\end{figure}
%
The stability norm behavior is shown in \cref{fig:experiments:Poisson:q-refinement:stability}, from which we observe that 
in both the well-resolved and under-resolved cases, the norm first rapidly decays and then flattens out when $h_y$ is small enough.
The approximate point when the stability norm starts to flatten out is at $1/h_y \approx 28$ (corresponding to $q=3.7$) 
for the under-resolved case and at $1/h_y \approx 70$ ($q=3.3$) for the well-resolved case. 

\begin{figure}[!htb]
  \centering
  \small
    \begin{tabular}{cc} 
        \hspace{0.03\linewidth}$h=0.08$ & \hspace{0.03\linewidth}$h=0.02$ \\
        \includegraphics[height=0.36\linewidth]{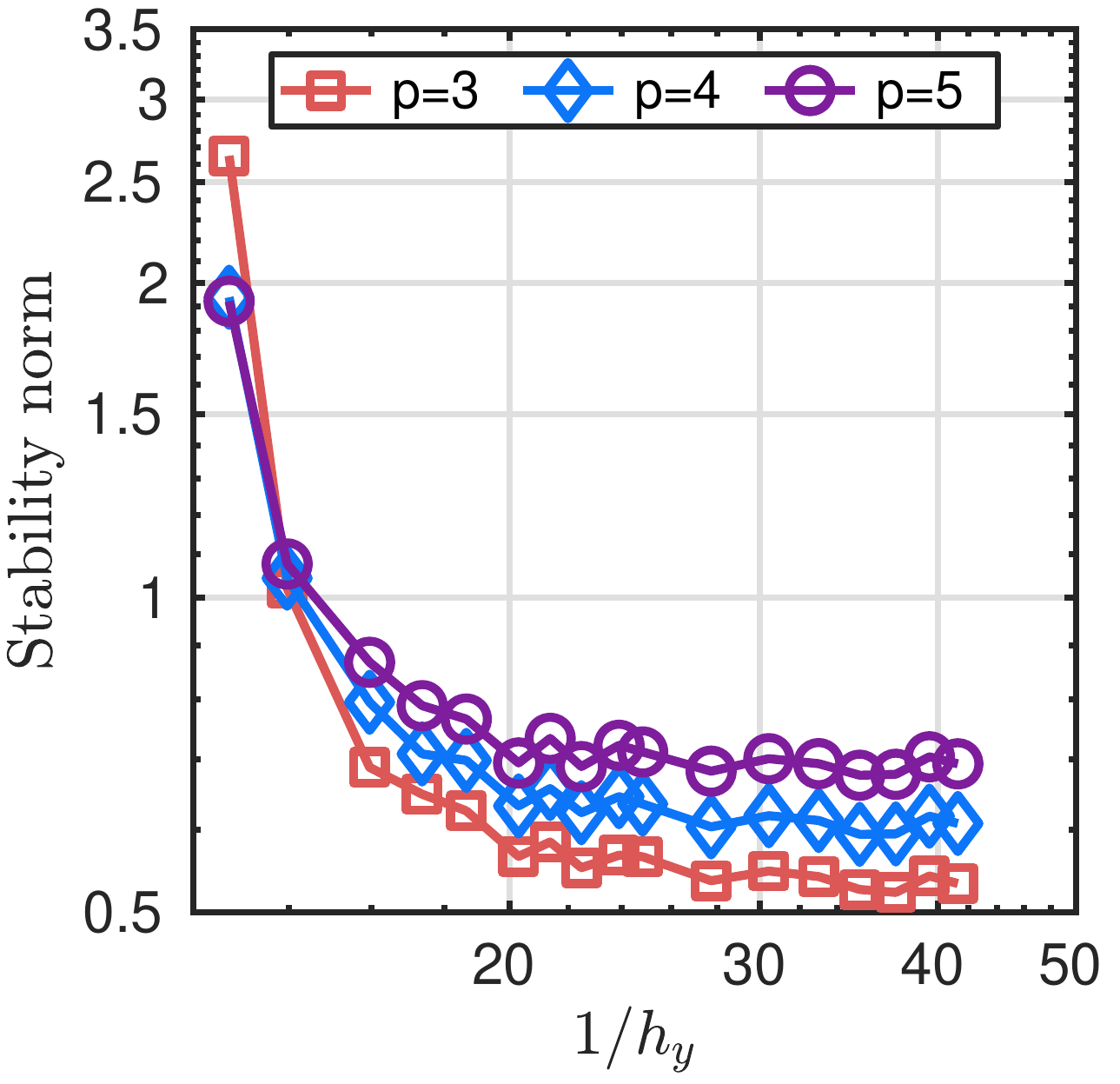} &
        \includegraphics[height=0.36\linewidth]{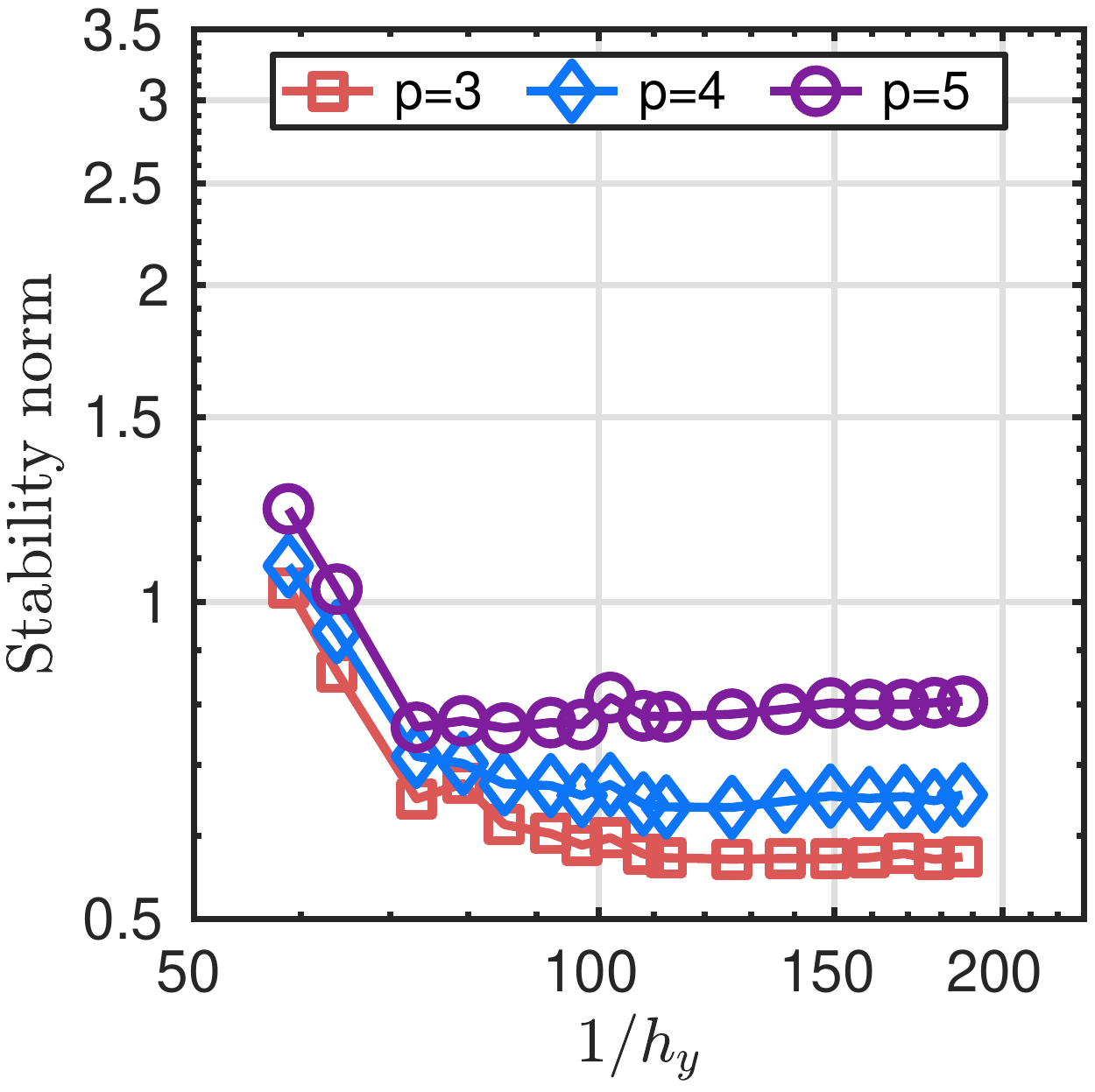} 
    \end{tabular}
        
    \caption{Both plots show the stability norm \eqref{eq:stabilitynorm}
    as a function of $1/h_y$, the average inverse node distance in the point set $Y$. 
    Different choices of the polynomial degree $p$ were used to form the trial space.
    The average internodal distance $h$ in the node set $X$, was for the left plot fixed at $h=0.08$ and for the right plot fixed and $h=0.02$, 
    illustrating the under-resolved and the well-resolved case. The values of $h_y$ were computed from
    $q=(1.1$, $1.3,$ $1.7,$ $2,$ $2.3,$ $2.7,$ $3,$ $3.3,$ $3.7,$ $4,$ $5,$ $6,$ $7,$ $8,$ $9,$ $10,$ $11)$.}
    \label{fig:experiments:Poisson:q-refinement:stability}
\end{figure}
\subsection{Eigenvalue spectrum as the oversampling is increased}


\new{In the case of the least-squares matrices, we use the relation $u(Y) = E_h u(X)$ to arrive at $E_h^+ u(Y) = u(X)$, which is then used in the discretized PDE 
$\bar D_h u(X) = \bar F(Y)$ to arrive at:
$$\bar D_h E_h^+ u(Y) = \bar F(Y).$$
We then investigate the eigenvalues of the square $M\times M$ matrix $\bar D_h E_h^+$. 
Both matrices $\bar D_h$ and $E_h$ are rectangular of size $M \times N$, thus the rank of each at most $N$. 
The rank of $\bar D_h E_h^+$ can then not be larger than $N$, implying that there will always exist a nullspace of size $M-N$. This is not a problem when we use the method for solving PDEs as we always solve for the unique $N$-dimensional least-squares solution.
}

\new{In Figure \ref{fig:experiments:Poisson:q-refinement:eigenvalues_Laplacian_EDinv}, we display the eigenvalue spectrum of the PDE matrix for $N=1000$ and an increasing oversampling parameter $q$. 
We observe that the real part of the spectra in the least-squares case shrink as $q$ is increased. 
The spectra of the collocation matrices have a larger negative real part compared to the spectra of the least-squares matrices.}
\begin{figure}[!htb]
    \centering
        \includegraphics[width=0.24\linewidth,height=0.3\linewidth]{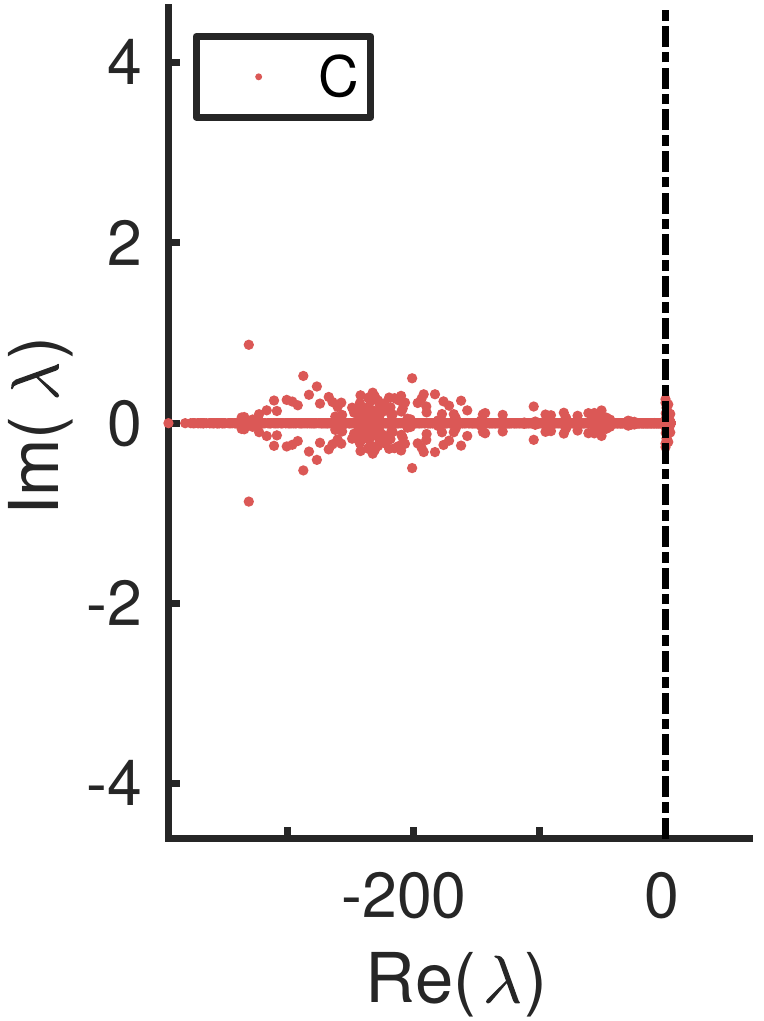}
        \includegraphics[width=0.24\linewidth,height=0.3\linewidth]{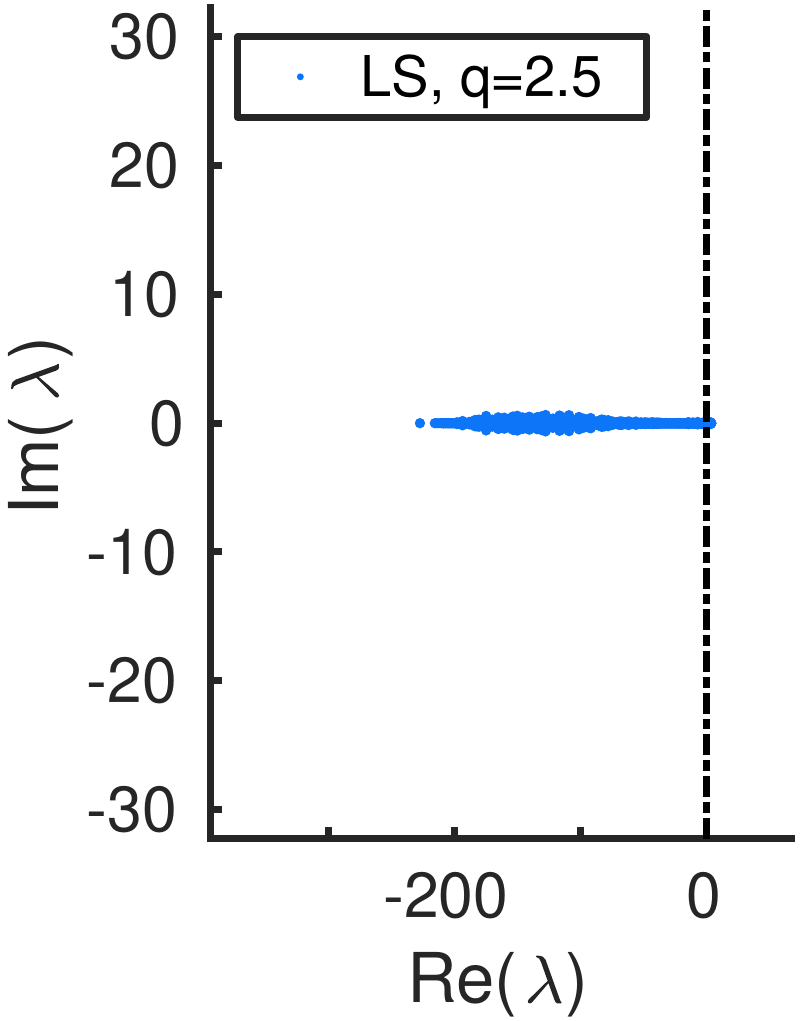}
        \includegraphics[width=0.24\linewidth,height=0.3\linewidth]{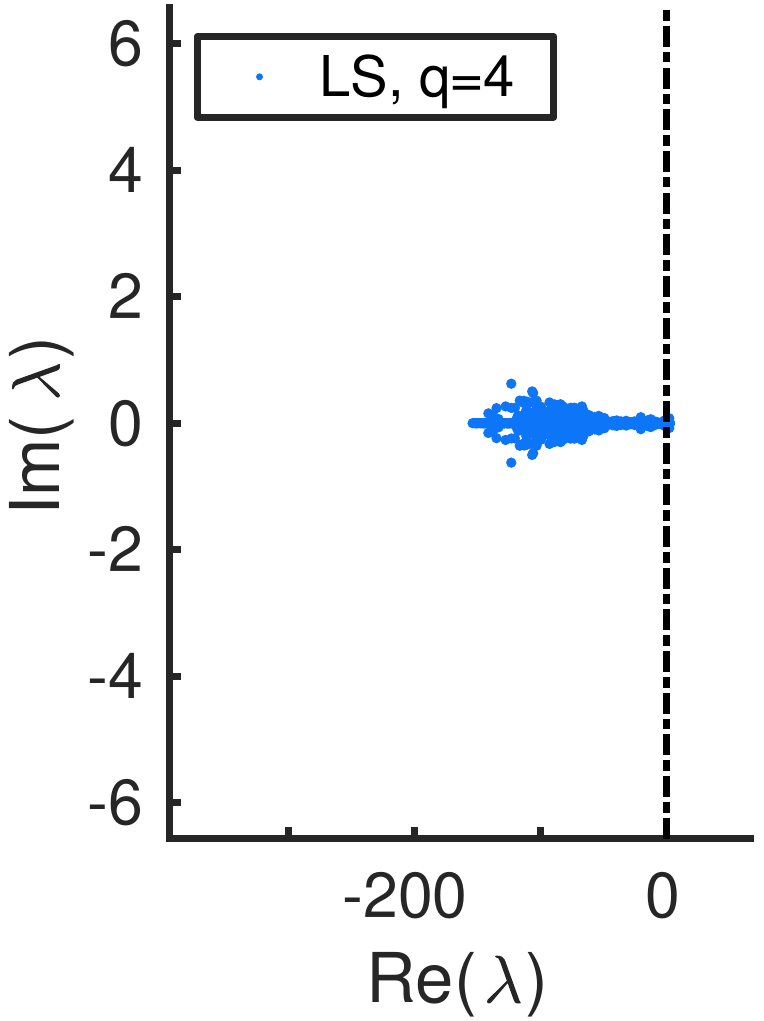}
        \includegraphics[width=0.24\linewidth,height=0.3\linewidth]{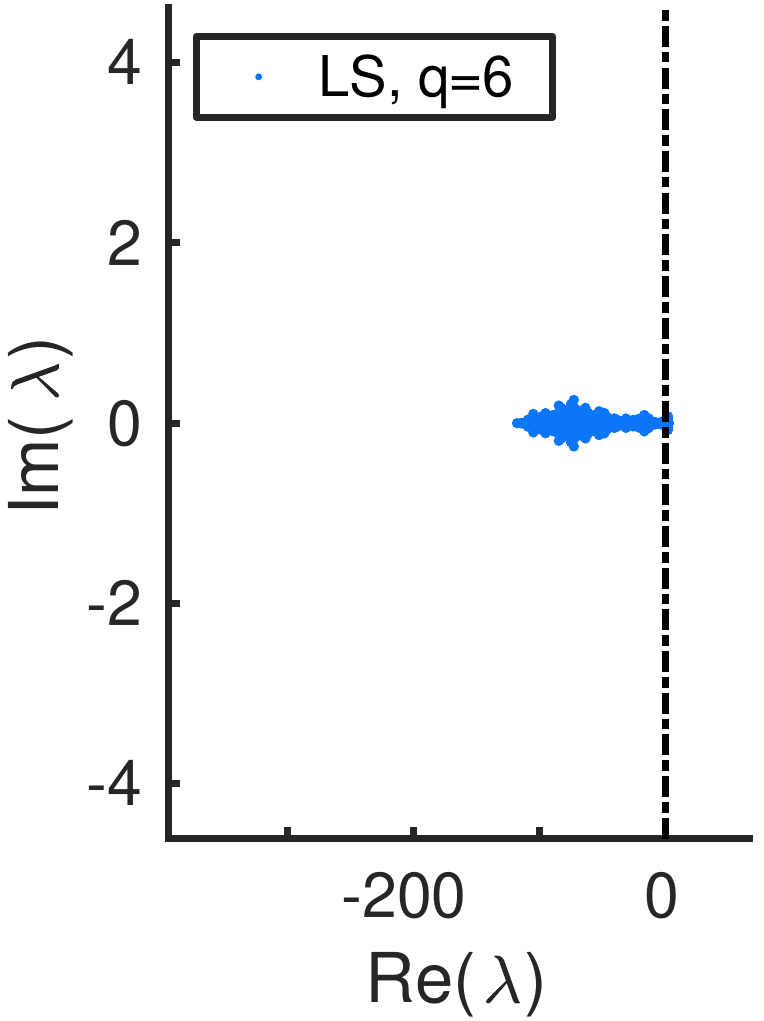}        
        \includegraphics[width=0.24\linewidth,height=0.3\linewidth]{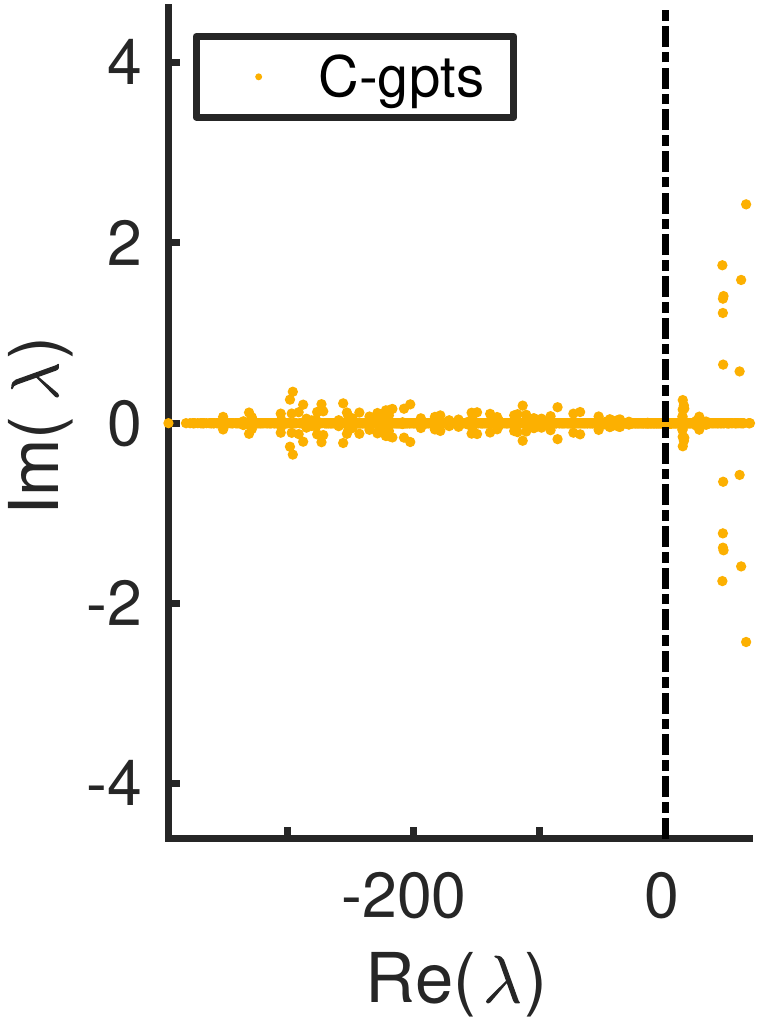}
        \includegraphics[width=0.24\linewidth,height=0.3\linewidth]{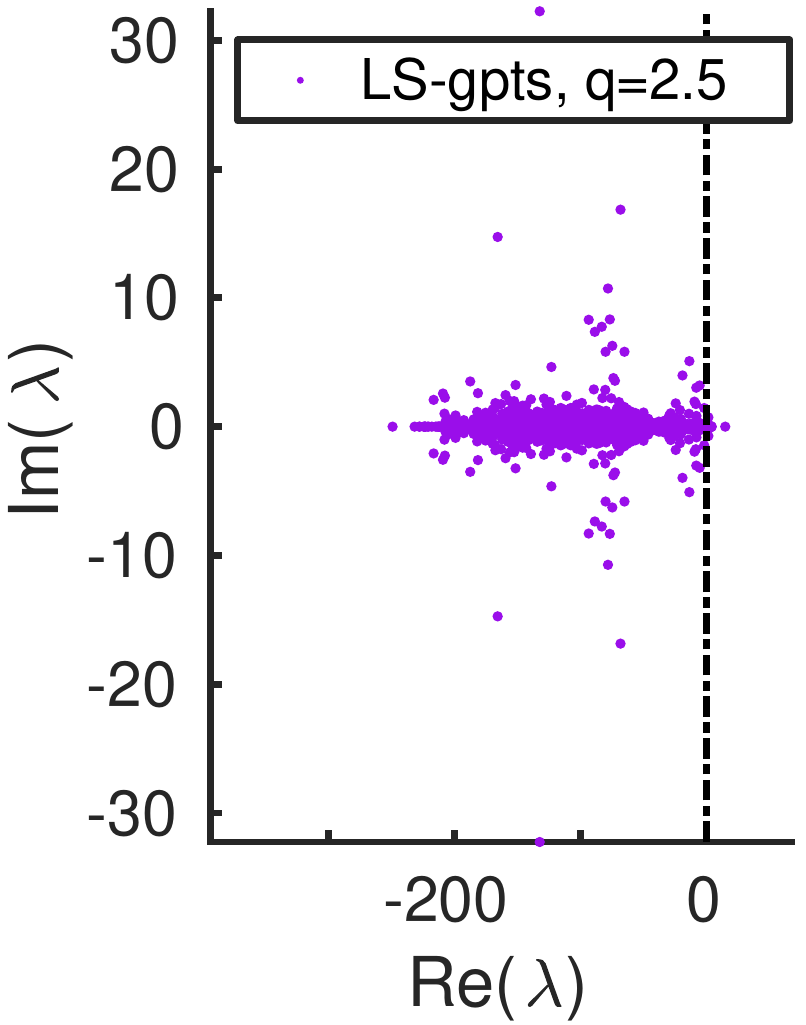}
        \includegraphics[width=0.24\linewidth,height=0.3\linewidth]{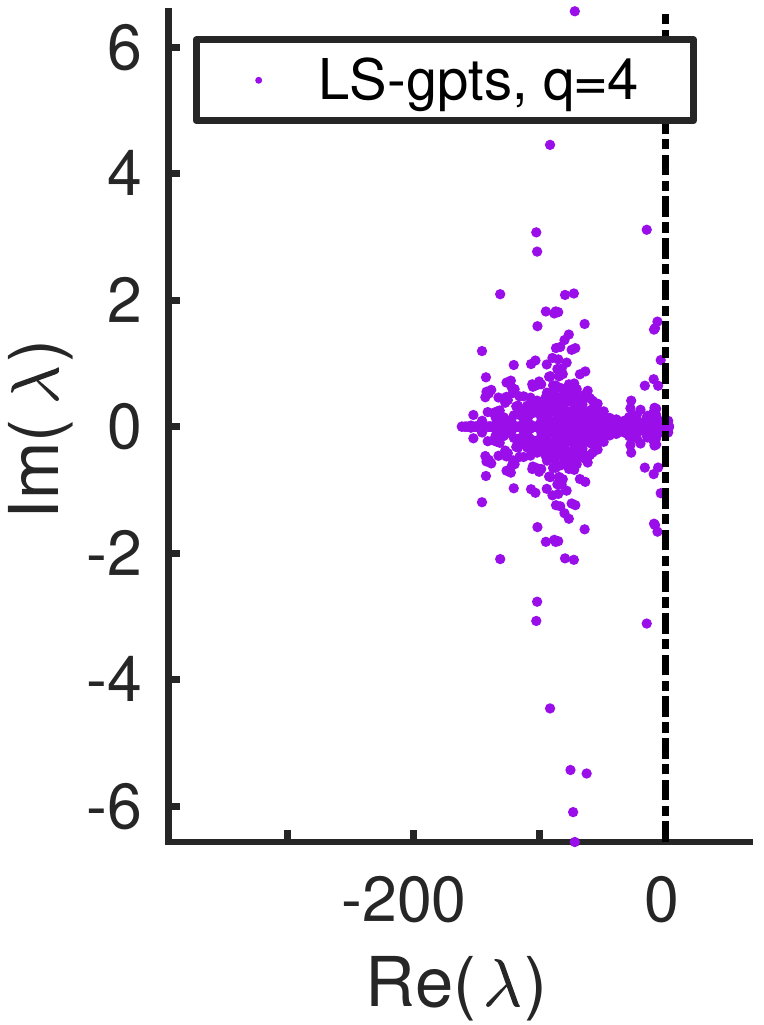}
        \includegraphics[width=0.24\linewidth,height=0.3\linewidth]{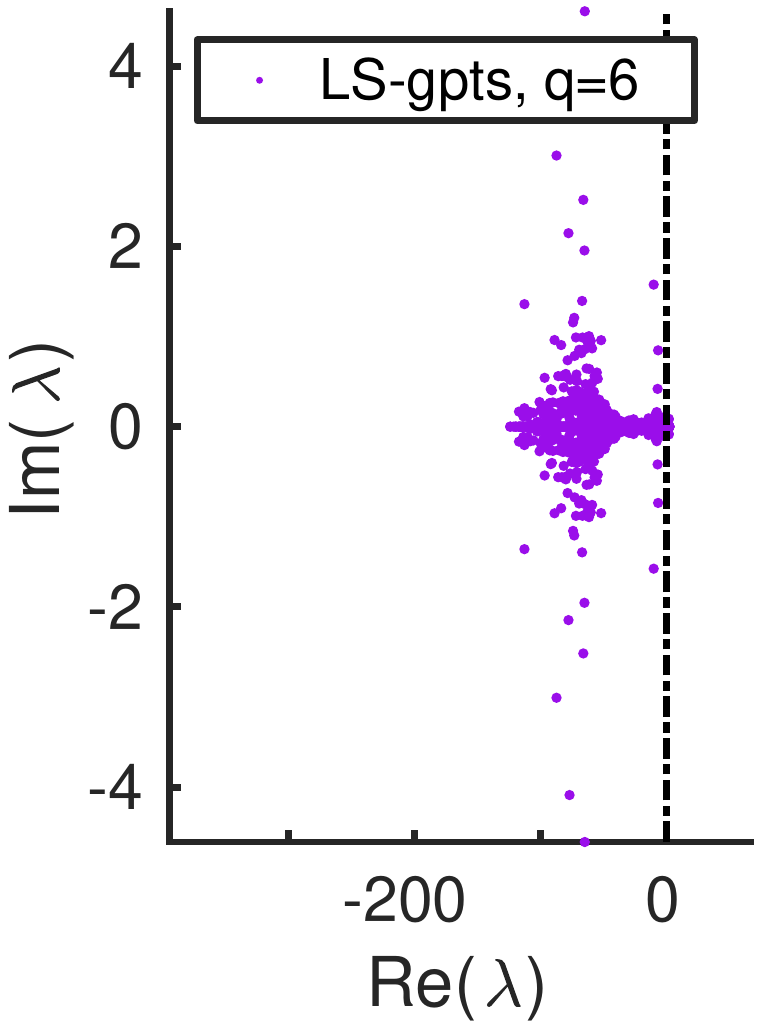}        
    \caption{Eigenvalues of $E\bar D^+$ as the oversampling parameter $q$ is increased, where $\bar D$ discretizes the Poisson equation with Dirichlet and Neumann boundary conditions imposed on 
    two disjoint parts of the boundary of the domain. In this test the number of nodes is $N=1000$ and the polynomial degree used to construct the local approximations is $p=4$.}
    \label{fig:experiments:Poisson:q-refinement:eigenvalues_Laplacian_EDinv}
\end{figure}
\new{In addition, we also display the eigenvalue spectra of the discretized first order operator $-g \cdot \nabla u$, where $g = [0, 1]$.
We only impose the Dirichlet boundary condition at the location of the polar angle $\theta \in [0, \pi]$. In this case, 
the least-squares cases have a less distinct behavior. However, we can observe that the negative and positive real parts of the spectrum
are moving slightly towards the imaginary axis as the oversampling is increased.}
\begin{figure}[!htb]
    \centering
        \includegraphics[width=0.24\linewidth,height=0.3\linewidth]{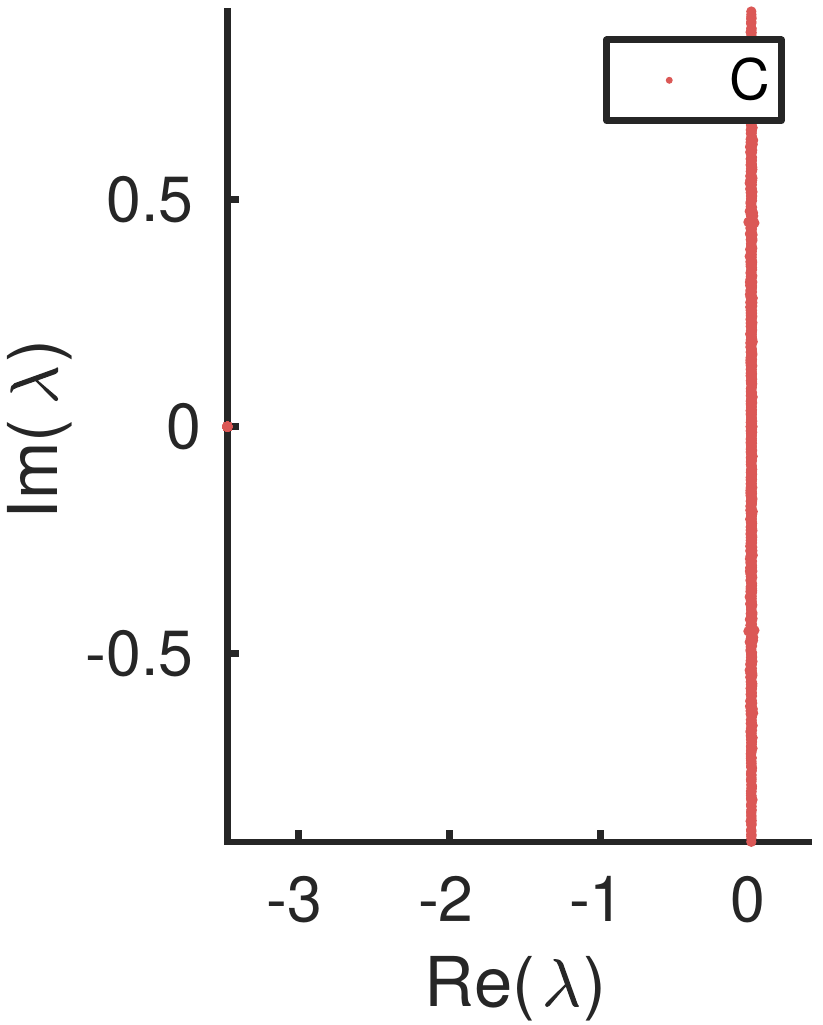}
        \includegraphics[width=0.24\linewidth,height=0.3\linewidth]{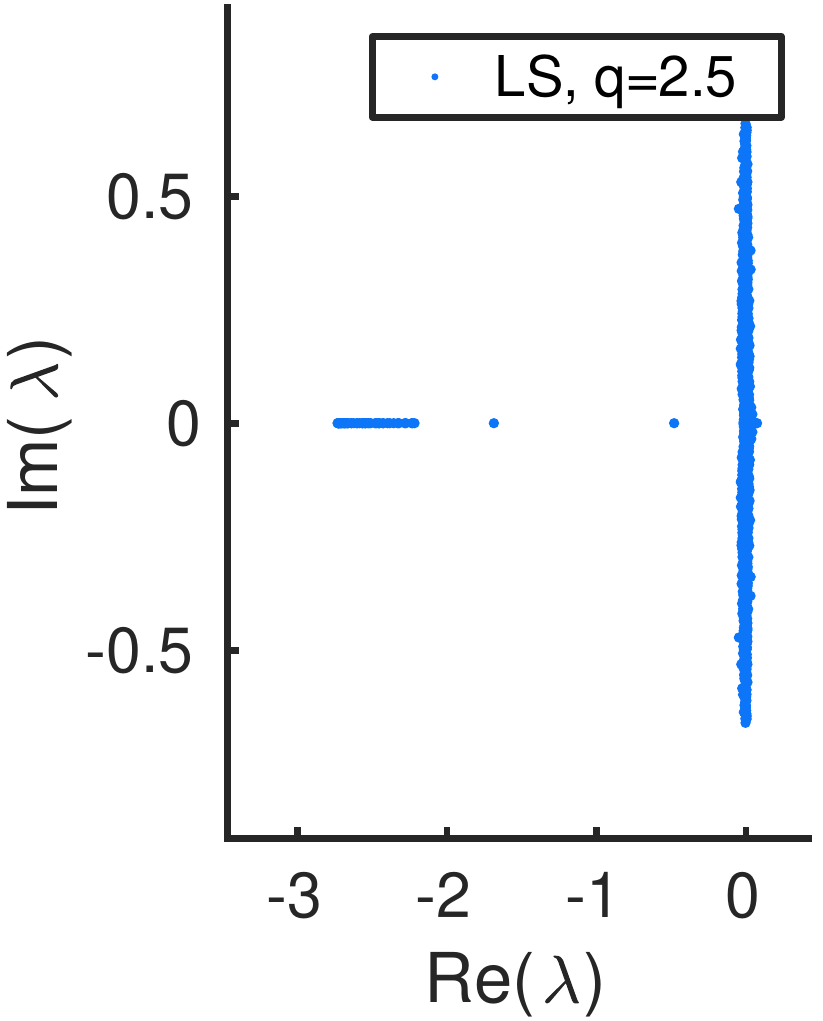}
        \includegraphics[width=0.24\linewidth,height=0.3\linewidth]{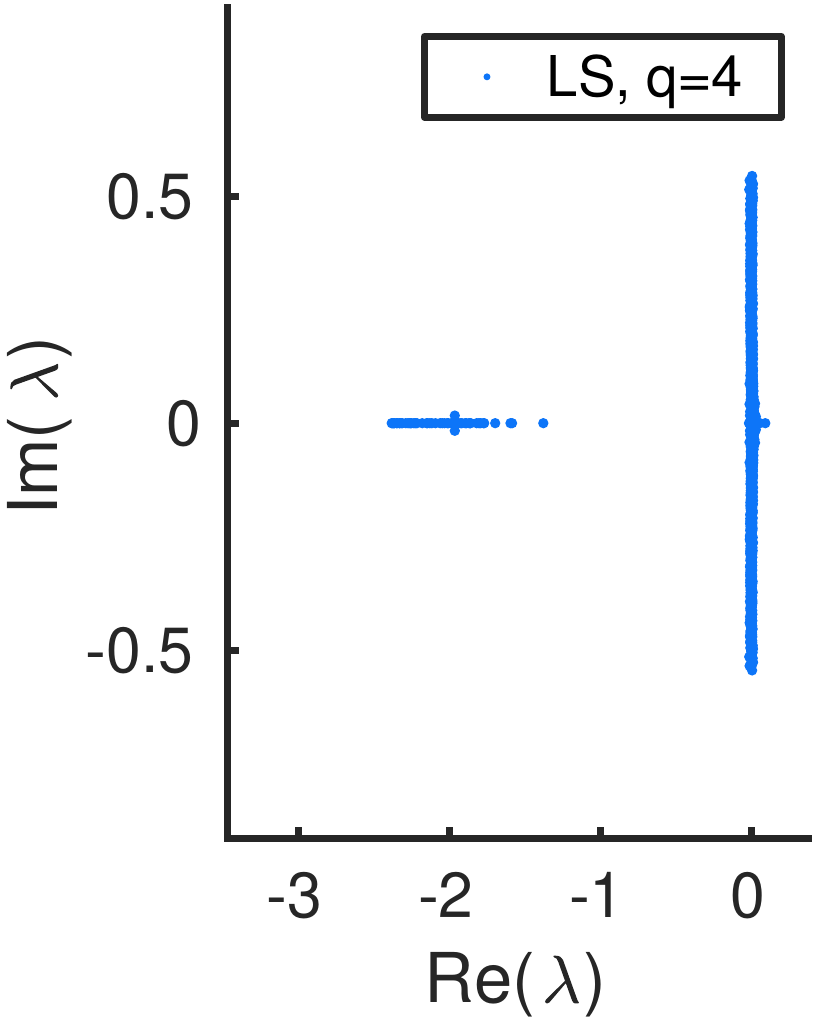}
        \includegraphics[width=0.24\linewidth,height=0.3\linewidth]{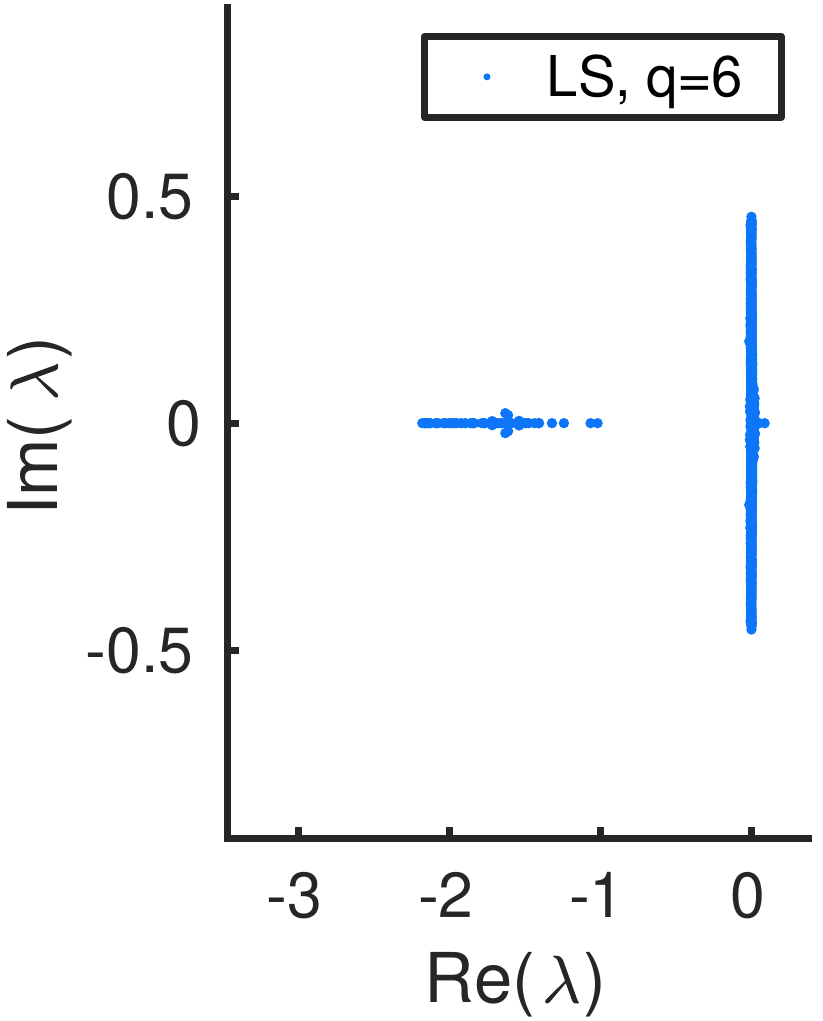}        
        \includegraphics[width=0.24\linewidth,height=0.3\linewidth]{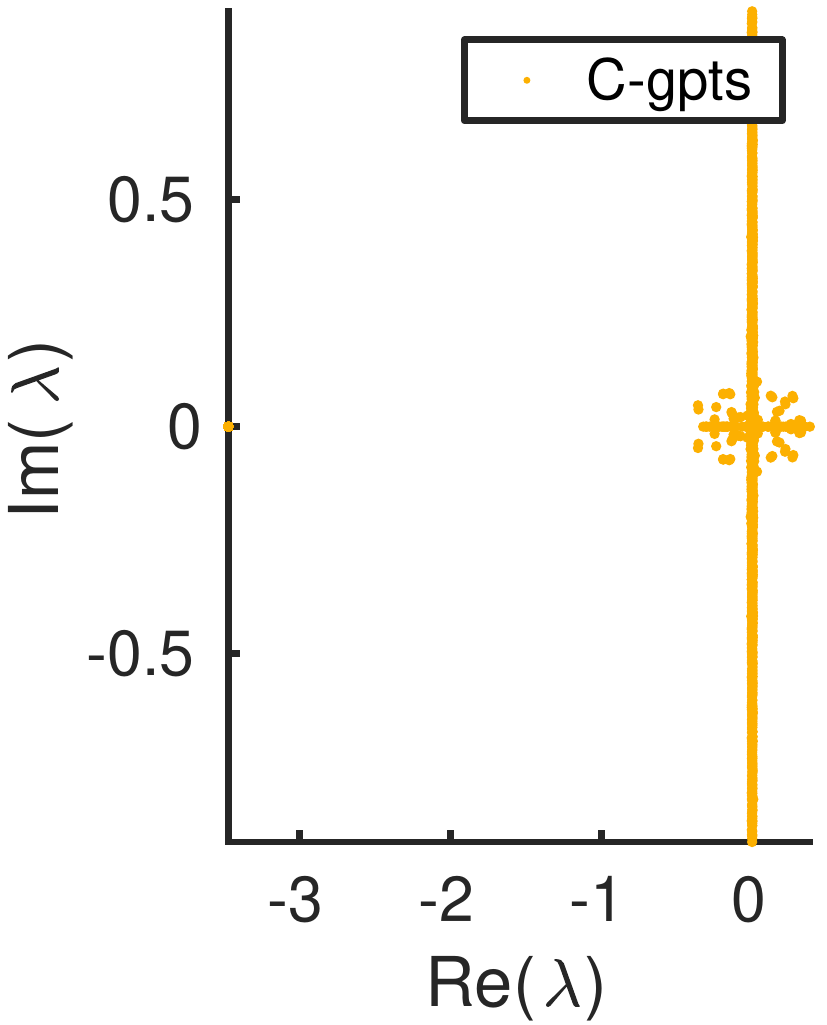}
        \includegraphics[width=0.24\linewidth,height=0.3\linewidth]{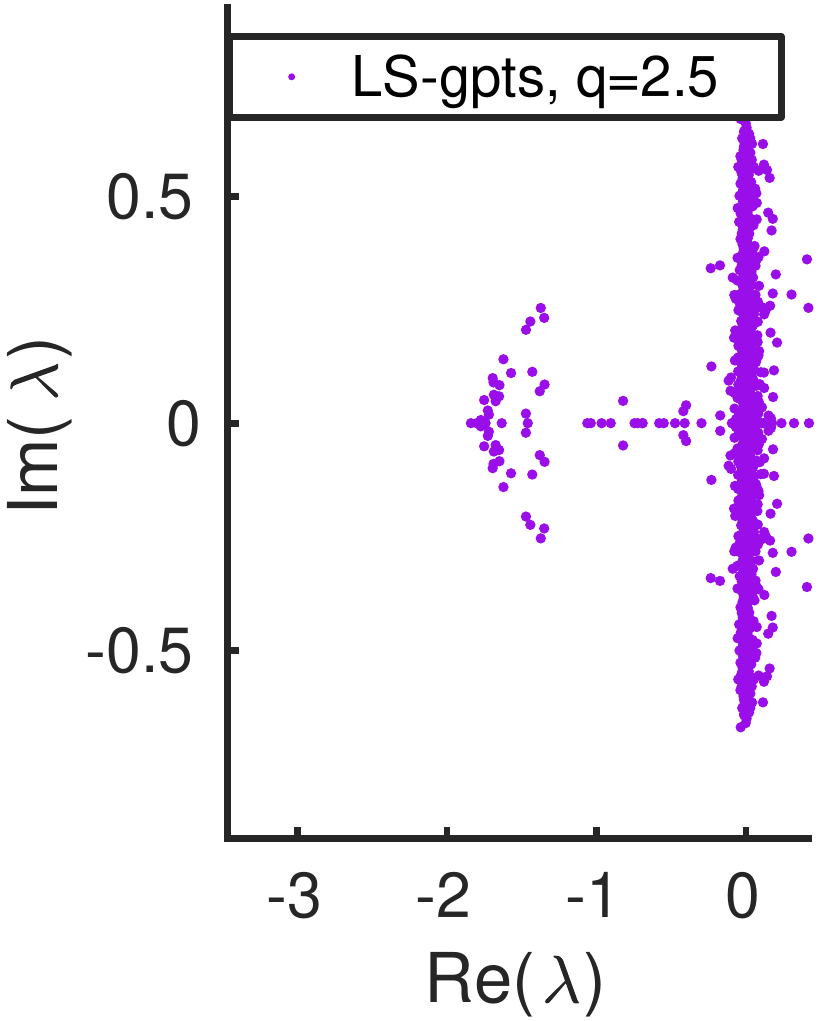}
        \includegraphics[width=0.24\linewidth,height=0.3\linewidth]{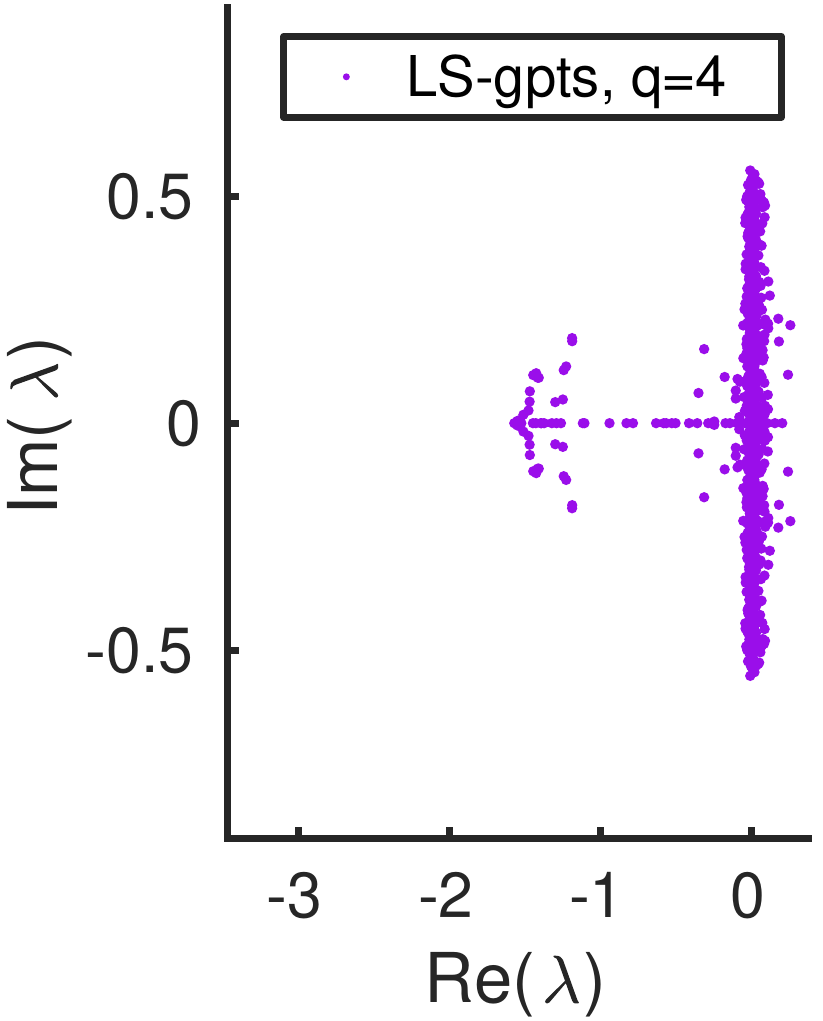}
        \includegraphics[width=0.24\linewidth,height=0.3\linewidth]{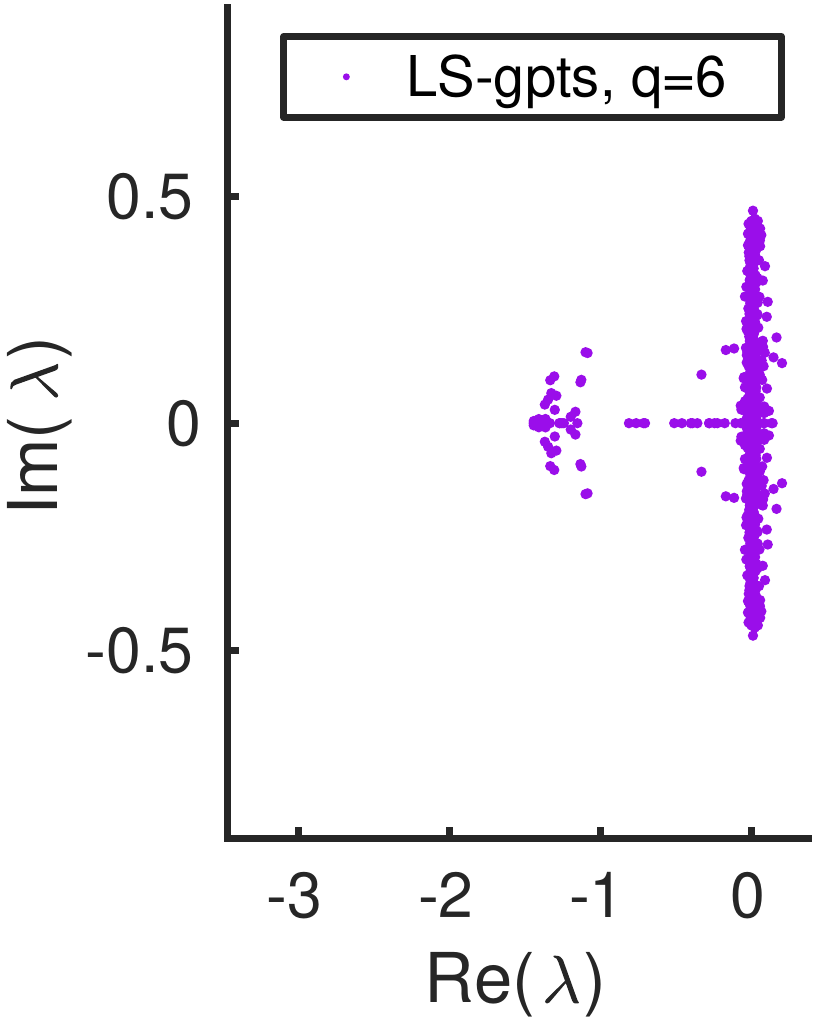}        
    \caption{Eigenvalues of $E\bar D^+$ as the oversampling parameter $q$ is increased, where $\bar D$ discretizes the advection equation with Dirichlet boundary condition at the inflow boundary
    of a domain $\Omega$. In this test the number of nodes is $N=1000$ and the polynomial degree used to construct the local approximations is $p=4$.}
    \label{fig:experiments:Poisson:q-refinement:eigenvalues_Advection_EDinv}
\end{figure}
\new{The increase in oversampling improves the eigenvalue spectra of the rectangular differentiation matrices 
in both eigenvalue examples.}

\subsection{Approximation properties as the polynomial degree is increased}
Here we increase the number of points per stencil, together with increasing the polynomial degree $p$ used to form 
the stencil-based interpolant \eqref{eq:M}, while the distance $h$ between the stencil points is kept the same. 
This is denoted by $p$-refinement. 
We consider polynomial degrees 
up to $p=12$ in order to test the limits of the method. Two different solution functions 
are considered, the truncated Non-analytic function \eqref{eq:experiments:nonanalytic} and the Rational sine 
function \eqref{eq:experiments:rationalsine}. The methods 
RBF-FD-LS, RBF-FD-LS-Ghost, RBF-FD-C, RBF-FD-C-Ghost are compared when $h$ is fixed at $h=0.08$ (under-resolved case) and 
at $h=0.02$ (well-resolved case).
\begin{figure}[!htb]
  \centering
  \small
    \begin{tabular}{ccc}
                   & \hspace{0.075\linewidth}Non-analytic & \hspace{0.075\linewidth}Rational sine \\
    \rotatebox[origin=c]{90}{\hspace{0.35\linewidth} $h = 0.08$} & \includegraphics[width=0.35\linewidth]{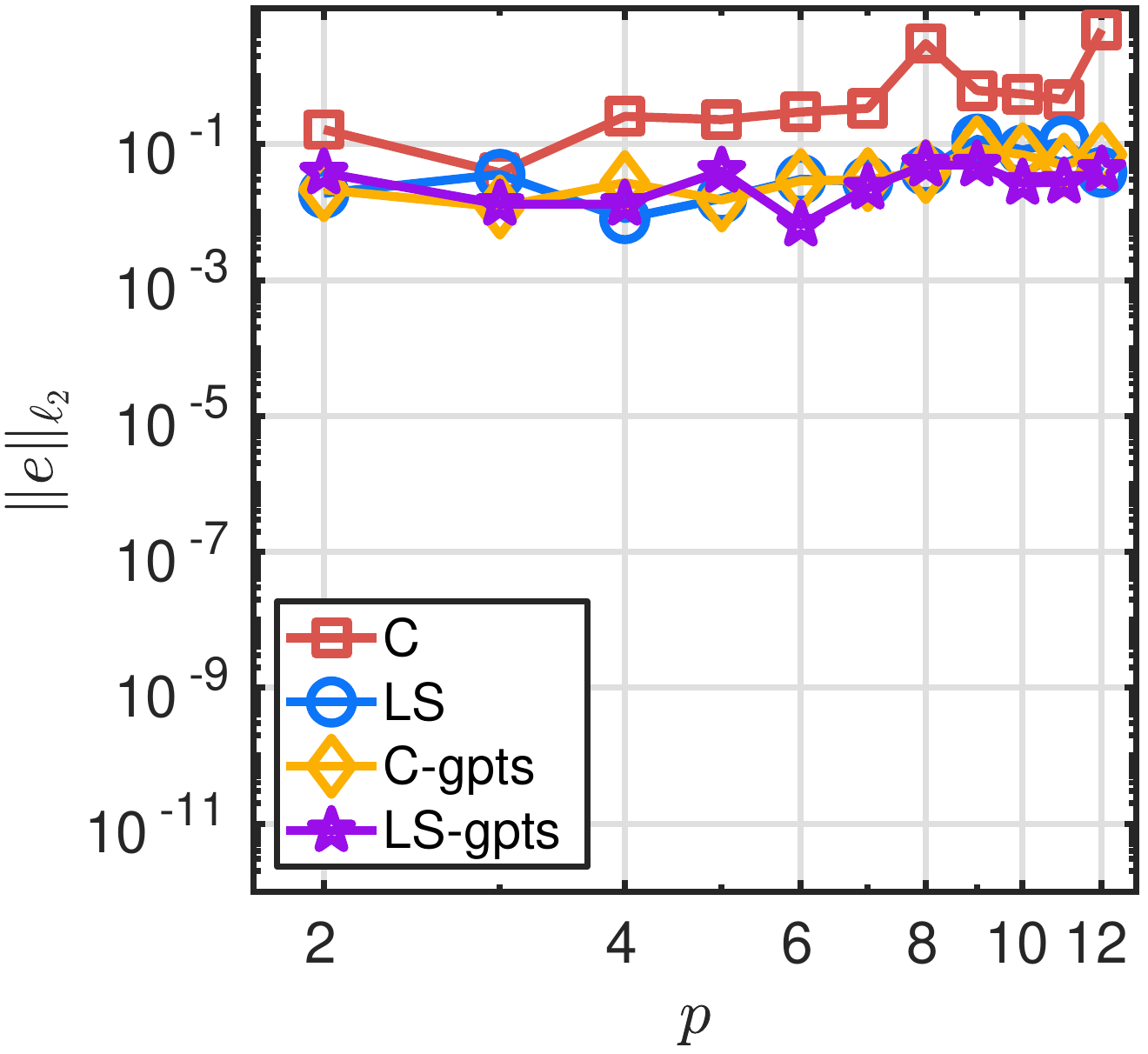} & \includegraphics[width=0.35\linewidth]{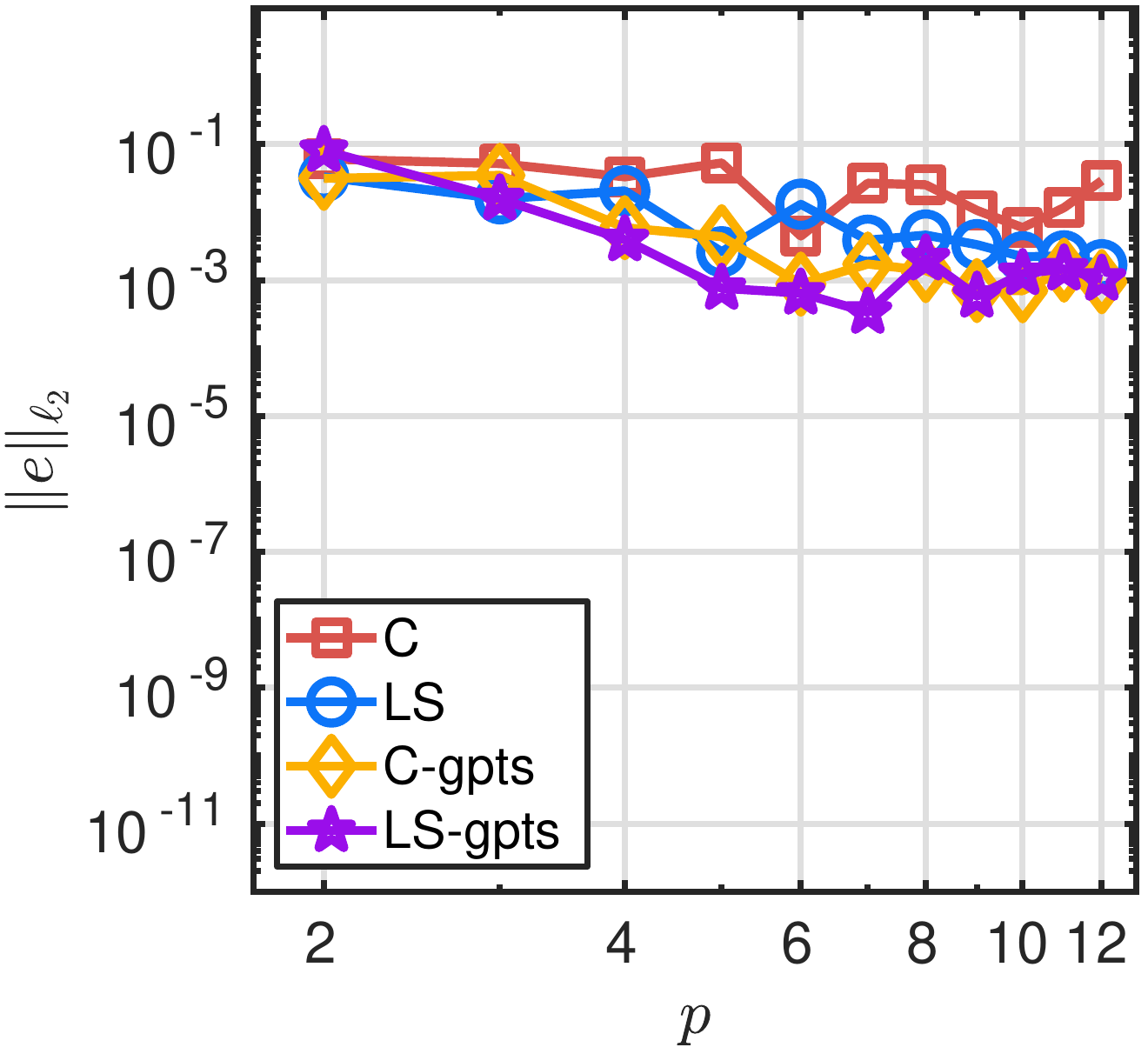} \\
    \rotatebox[origin=c]{90}{\hspace{0.35\linewidth} $h = 0.02$} & \includegraphics[width=0.35\linewidth]{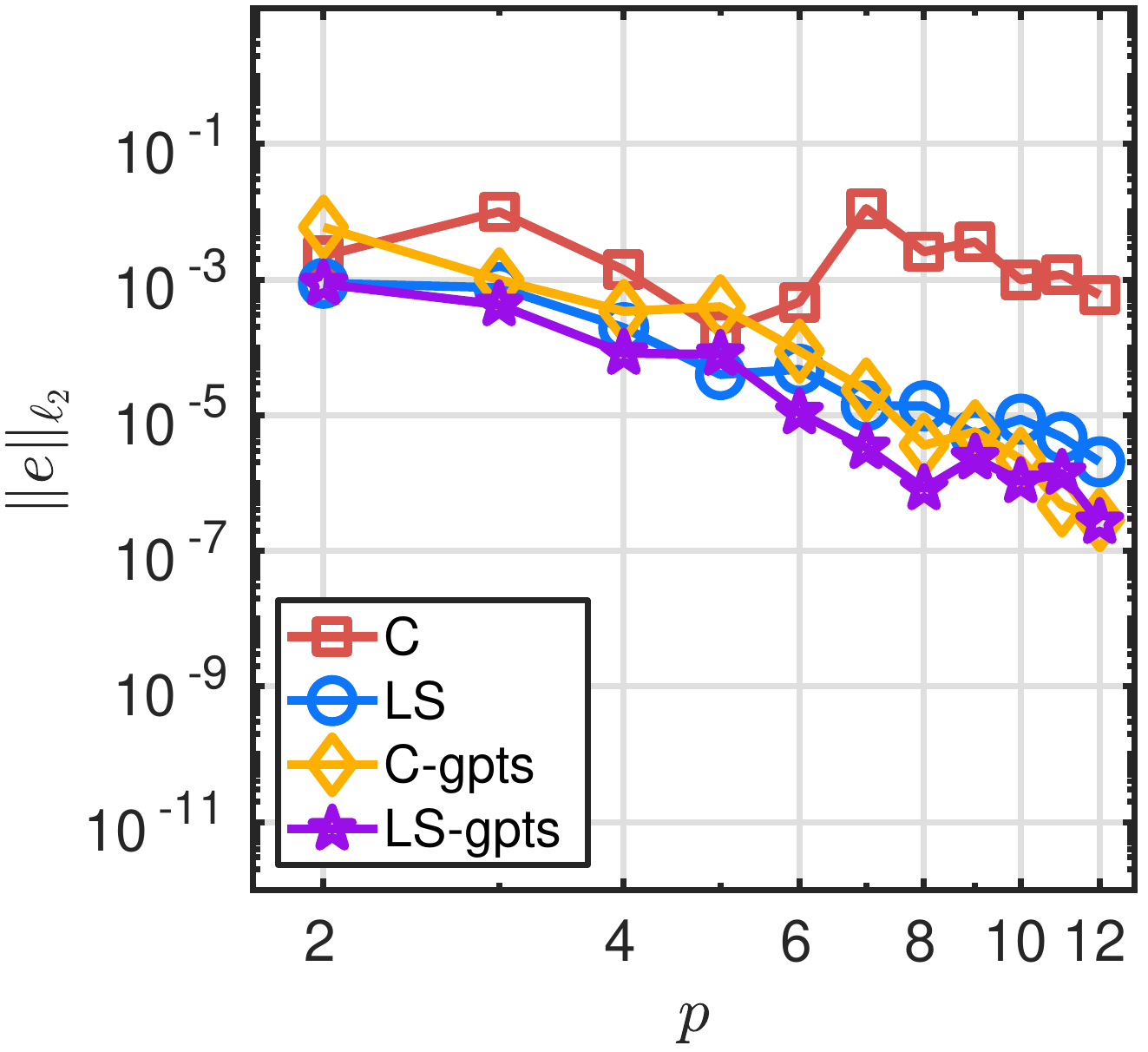}  & \includegraphics[width=0.35\linewidth]{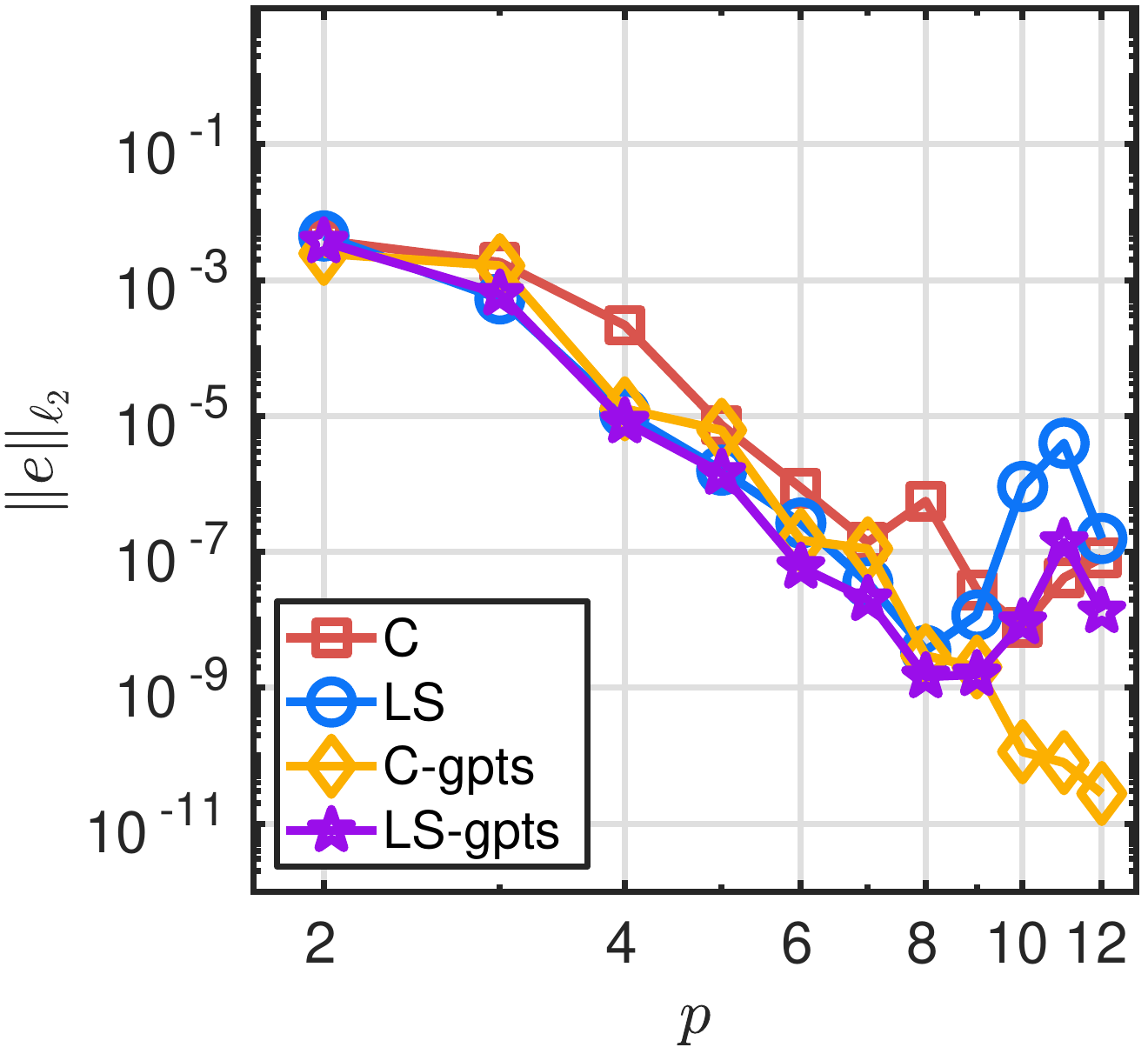} 
    \end{tabular}
        
    \caption{The relative error in the $p$-refinement mode: the polynomial degree used to form the stencil-based interpolation matrix \eqref{eq:M} is increased, while the internodal distance between the stencil points is fixed at 
    $h=0.08$ and $h=0.02$.}
    \label{fig:experiments:Poisson:p-refinement:error}
\end{figure}
In \cref{fig:experiments:Poisson:p-refinement:error}, we see that the error for both resolutions and both manufactured solutions is smaller for 
RBF-FD-LS compared with RBF-FD-C. 
For the under-resolved case ($h=0.08)$), there is some improvement of the error when increasing $p$ for the Rational sine function. 
The results are worse for the truncated Non-analytic function that has large derivatives and requires higher resolution. In this case, the error increases for $p>4$. 
In the well-resolved case ($h=0.02$), we observe convergence with $p$ in all cases except RBF-FD-C for the Non-analytic function. 
For $p\geq 8$, round-off errors prevent further convergence for RBF-FD-LS. The convergence trend for RBF-FD-C levels out earlier than for RBF-FD-LS. 
\new{Comparing RBF-FD-LS-Ghost with RBF-FD-C-Ghost we can see that RBF-FD-LS-Ghost is more accurate in the case that we are considering.}


%


\begin{figure}[!htb]
    \centering
    \begin{tabular}{cc}
        \hspace{0.075\linewidth}$h=0.08$ & \hspace{0.075\linewidth}$h=0.02$ \\
        \includegraphics[width=0.35\linewidth]{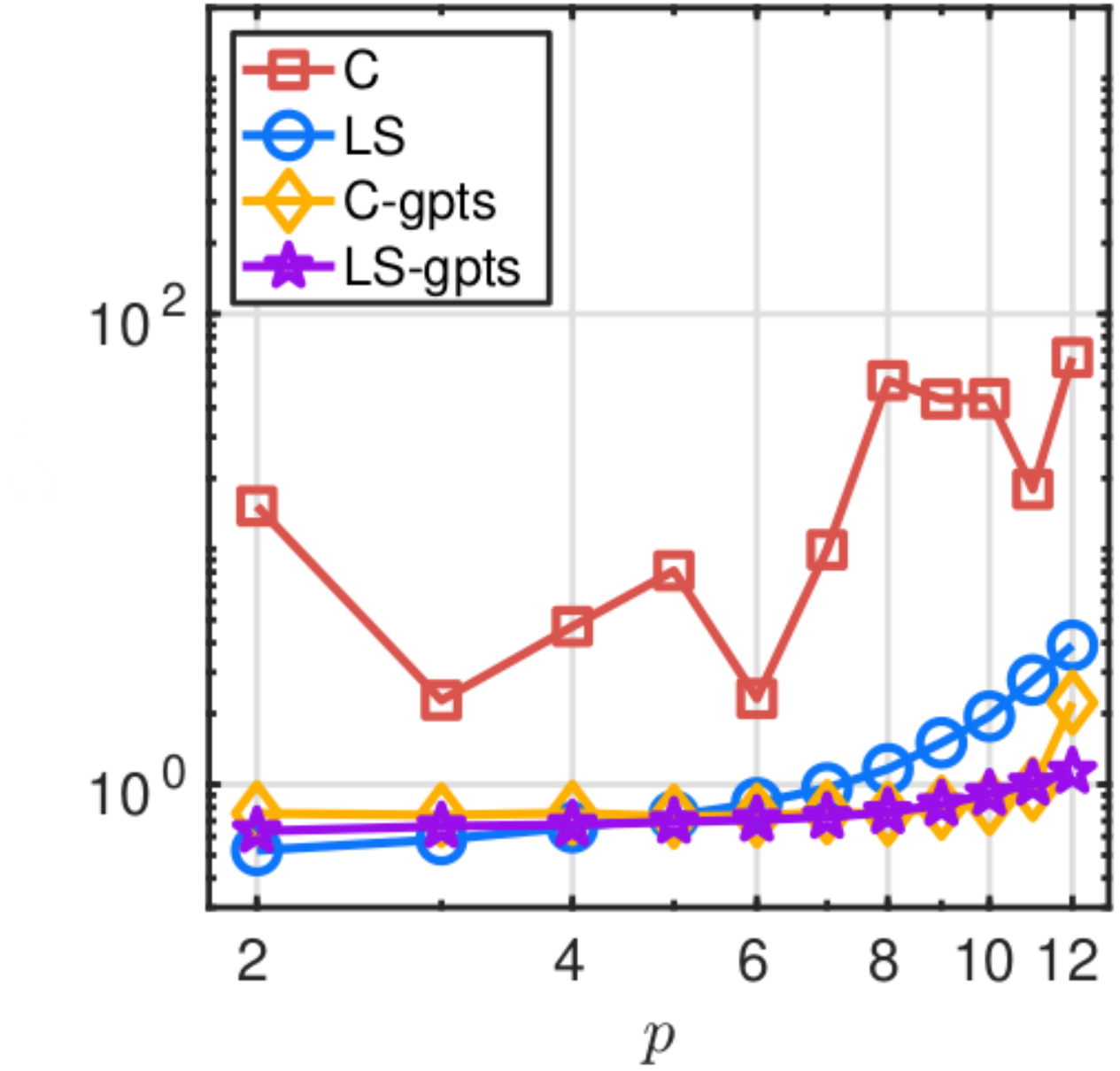} &
        \includegraphics[width=0.35\linewidth]{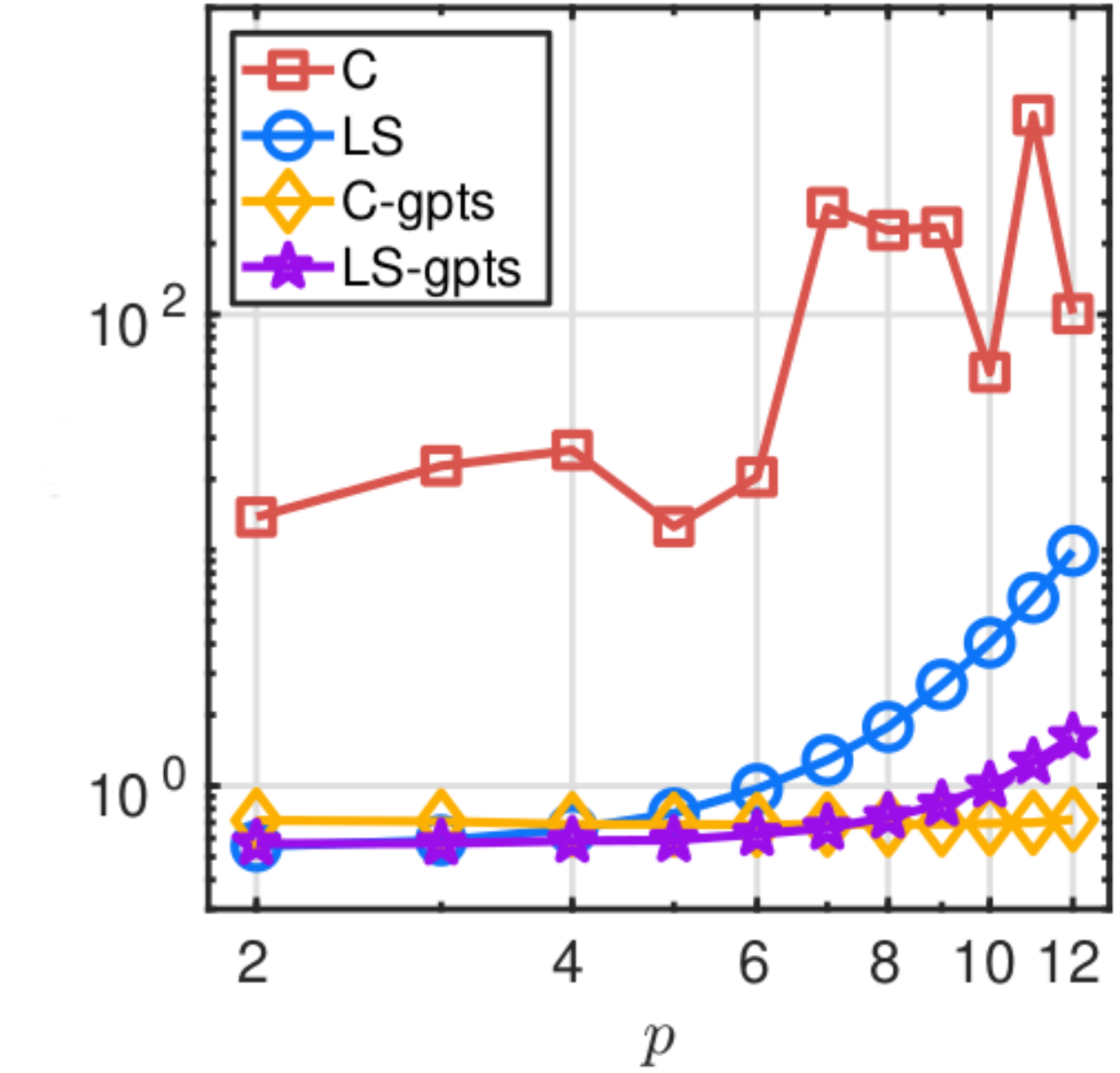} 
    \end{tabular}
        
    \caption{The stability norm in the $p$-refinement mode: the polynomial degree used to form the stencil-based interpolation matrix \eqref{eq:M} is increased, while the internodal distance between the stencil points $h$ is fixed at 
    $h=0.08$ and $h=0.02$.}
    \label{fig:experiments:Poisson:p-refinement:stabilityNorm}
\end{figure}
\begin{figure}[!htb]
    \centering
    \begin{tabular}{ccc}
        \raisebox{0.98cm}{\includegraphics[width=0.33\linewidth]{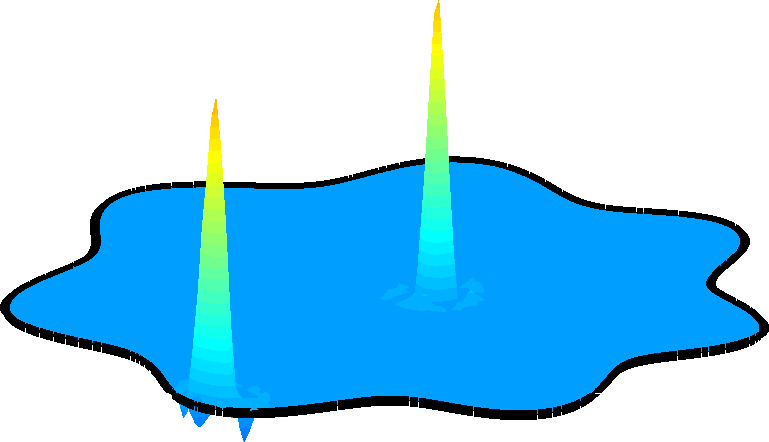}}&
        \includegraphics[width=0.33\linewidth]{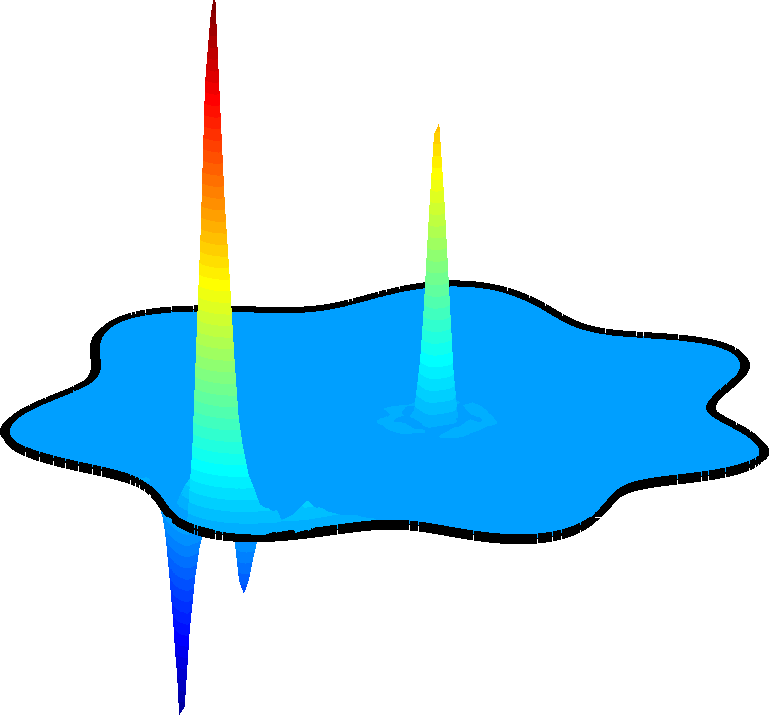} &
        \raisebox{1cm}{\includegraphics[width=0.05\linewidth]{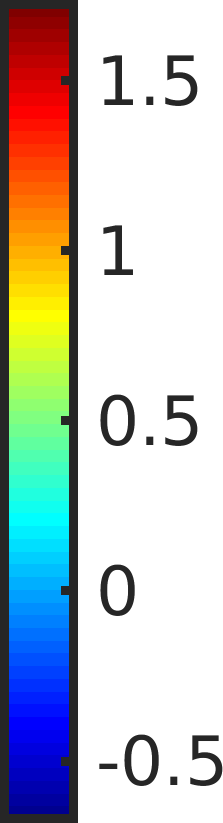}}    
    \end{tabular}
        
    \caption{Example of two cardinal functions placed on the boundary of $\Omega$ and in the interior of $\Omega$ for $p=3$ (left) and $p=12$ (right). For both cases the internodal distance 
      is $h=0.06$.
    }
    \label{fig:experiments:stabilityNorm_carindalFunctions}
\end{figure}
The stability norm as a function of $p$ is shown in \cref{fig:experiments:Poisson:p-refinement:stabilityNorm}. It has an increasing trend \new{for all methods}. 
Based on~\cref{eq:matform} and~\cref{theor:Dv}, we expect the bounds to grow with the stencil size, which depends on $p$. 
In the ideal case, the maximum value for any cardinal function is one, but here, there is an exponential growth of these functions with $p$, especially for cardinal functions close to the boundary. 
The largest weight for $p=12$ has $|w|\approx 3.2$. Cardinal functions for different values of $p$ are illustrated in \cref{fig:experiments:stabilityNorm_carindalFunctions}. 
The condition numbers of the local interpolation matrices $\tilde{A}_k$ also grow exponentially with $p$, and for $p\gtrsim 12$ prevent accurate numerical evaluation of the weights.  
\new{The results also show that both methods with ghost points have a smaller growth in the stability norm for an increasing $p$, compared to the methods which do not use ghost points. 
Since all of the stencils around the boundary are less skewed when using ghost points, it is expected that the stability improves. 
}

\subsection{\new{Approximation properties under node refinement in three dimensions}}
\new{In this section we solve the PDE problem \eqref{eq:methods:Poisson} in the same way as in the previous sections, but now in three dimensions. 
We compare RBF-FD-LS and RBF-FD-C with ghost points and without ghost points. The stencil sizes are in all cases $n=2m$, where $m$ is the dimension of the polynomial space.} 

\new{The norm scaling for the least-squares methods is in the 3D case  
chosen as $\beta = h_y^{3/2}$ for the Laplacian equations and $\beta = h_y$ for both boundary conditions, according to the relation \eqref{eq:method:scaling}.
The equation scaling $\beta_0$, $\beta_1$ and $\beta_2$ for the Dirichlet boundary, Neumann boundary and the Laplacian condition is given by : $\beta_0 = 1/h$, $\beta_1 = 1$, $\beta_2 = 1$, 
which is the same scaling as used in the 2D cases.}

\new{The 3D domain in spherical coordinates is given by:
$$r(\theta,\phi) = \left(1+\sin(2 \sin(\phi) \sin(\theta))^2\, \sin(2 \sin(\phi) \cos(\theta))^2\, \sin(2 \cos(\phi))^2\right)^{\frac{1}{2}},$$
where $\theta$ is the longitude angle $\theta \in [-\pi, \pi)$ and $\phi$ is a latitude angle $\phi \in [-\frac{\pi}{2}, \frac{\pi}{2})$.
The right-hand-sides of the PDE \eqref{eq:methods:Poisson} are computed based on a solution function:
$$u = \sin(3\pi\,x\,y\,z).$$
The coordinates $(x,y,z)$ of the mixed boundary conditions are given by: 
\begin{equation*}
    (x,y,z)|_{\partial\Omega_0} = \{(x,y,z)|_{\partial\Omega}\, |\, z < 0.7\},\quad
    (x,y,z)|_{\partial\Omega_1} = \{(x,y,z)|_{\partial\Omega}\, |\, z \geq 0.7\},
\end{equation*}
where the first set corresponds to the location of the Dirichlet boundary condition and the second to the location of the Neumann boundary condition.
}

\new{In the previous experiments in 2D we mostly used the oversampling parameter $q=3$. An equivalent sampling in three dimensions is motivated by knowing that 
$10$ points in 1D sample a fixed domain $[\Omega]^1$ equivalently well as $100$ points sample $[\Omega]^2$ in 2D and $1000$ points sample $[\Omega]^3$ in 3D. It follows that the relation $q_3 = q_2\sqrt{q_2}$ holds, where 
$q_2$ is a sampling in 2D and $q_3$ is an equivalent sampling in 3D.
All of our 3D computations are therefore based on a choice of an oversampling parameter $q=\lceil 3\sqrt{3} \rceil = 6$.}
\begin{figure}[!htb]
    \centering
        \includegraphics[width=0.4\linewidth]{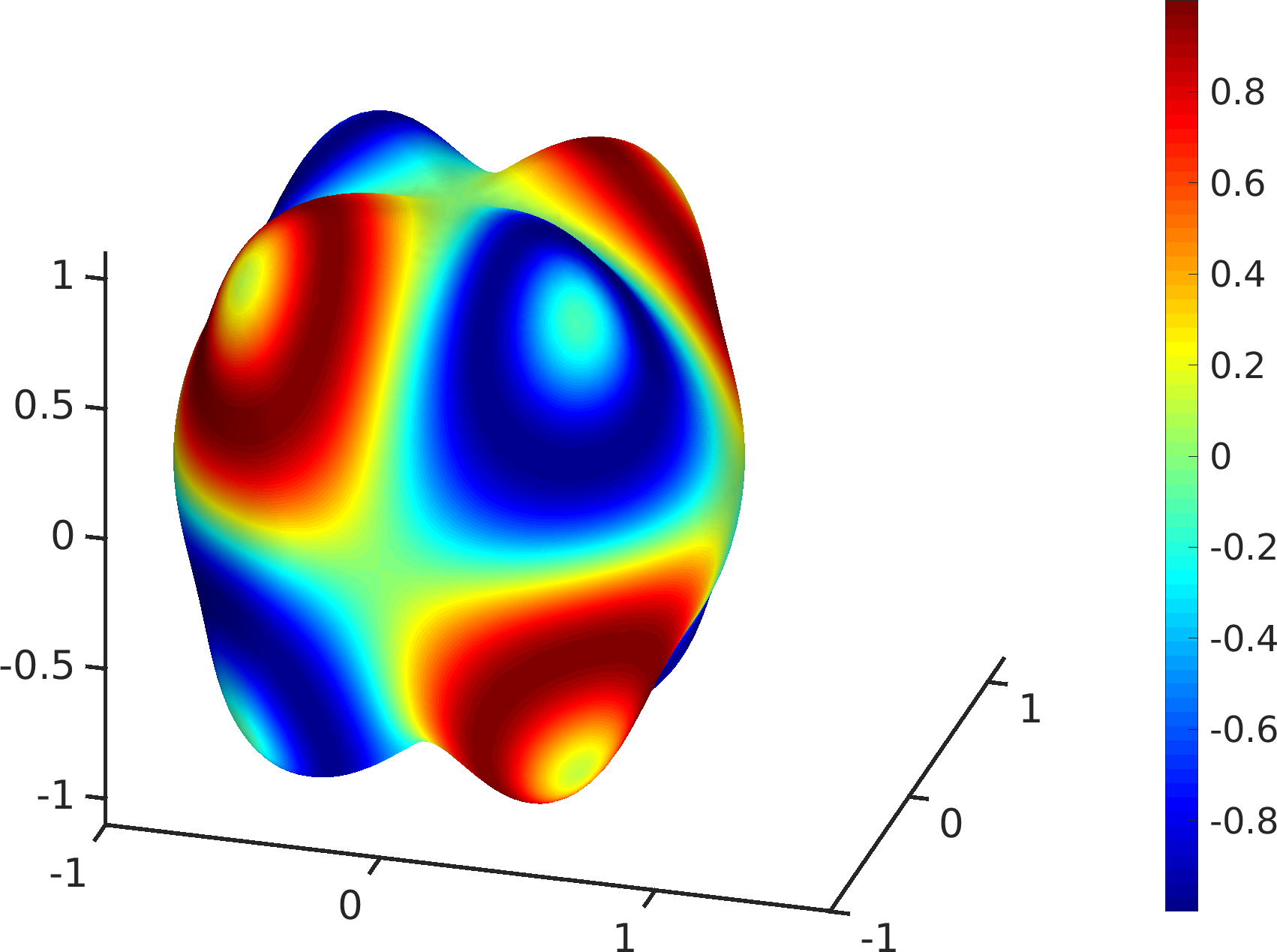}
        \includegraphics[width=0.4\linewidth]{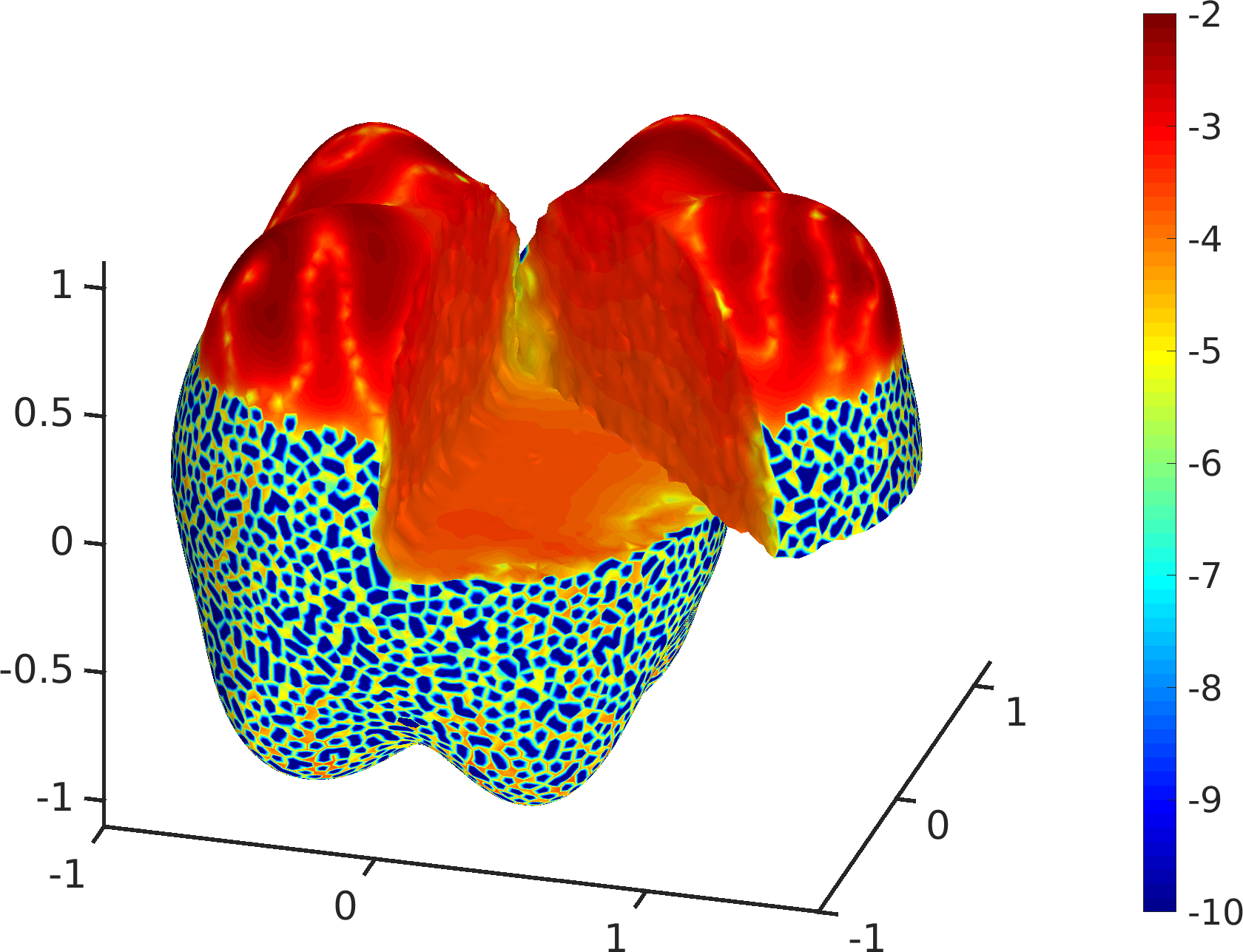}        
    \caption{Solution function (left) and spatial distribution of the relative absolute error in the logarithmic scale (right) when $h$ is chosen such that it corresponds to $N=24000$ points placed over $\Omega$. The polynomial degree chosen to 
    construct local approximations is $p=5$ and the oversampling parameter is $q=6$.}
    \label{fig:experiments:3D:solution_error}
\end{figure}
\new{An instance of a solution function, together with a relative absolute error in the logarithmic scale is given in \cref{fig:experiments:3D:solution_error}. 
We can observe that the largest errors appear close to the boundary $\partial\Omega_1$, where the Neumann condition is imposed. The error at the locations of 
the Dirichlet boundary $\partial\Omega_0$ is very small in some points, since those are the points where this condition is enforced exactly. The error in other points at the same boundary is 
larger, since in those points the Dirichlet condition is not satisfied exactly.}
\new{In \cref{fig:experiments:3D:solution_error,} we compute the error as the internodal distance $h$ is decreased and the polynomial degrees $p=2$, $3$ and $4$ are used 
to construct 
the local approximations. Our numerical results show that RBF-FD-LS, RBF-FD-LS-Ghost and RBF-FD-C-Ghost methods share a similar accuracy, while RBF-FD-C behaves unpredictably.}
\begin{figure}[!htb]
  \centering
  \small
    \begin{tabular}{ccc}
        \hspace{0.075\linewidth}$p=2$ & \hspace{0.075\linewidth}$p=3$ & \hspace{0.075\linewidth}$p=4$ \\
    \includegraphics[width=0.28\linewidth]{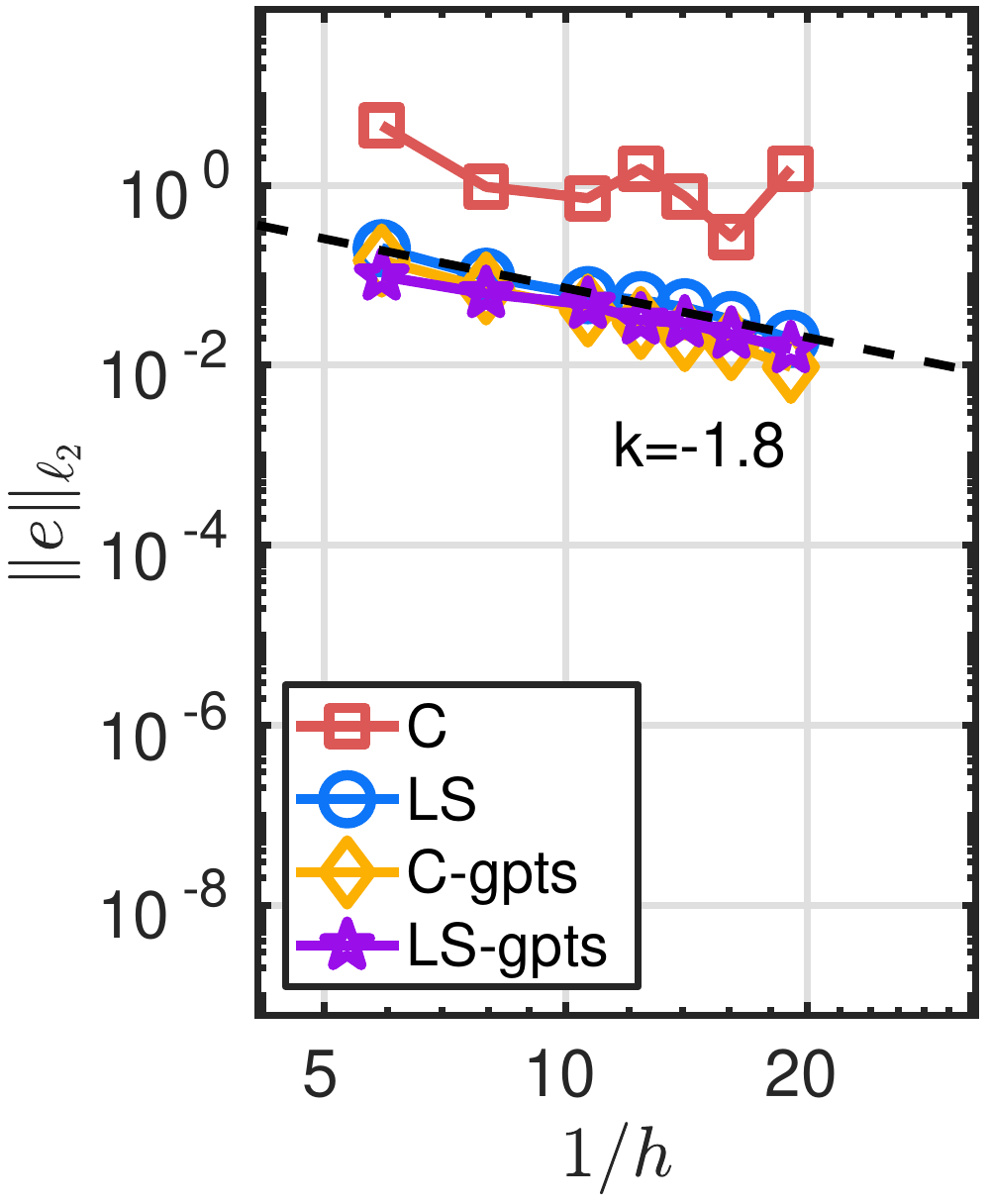} &
        \includegraphics[width=0.28\linewidth]{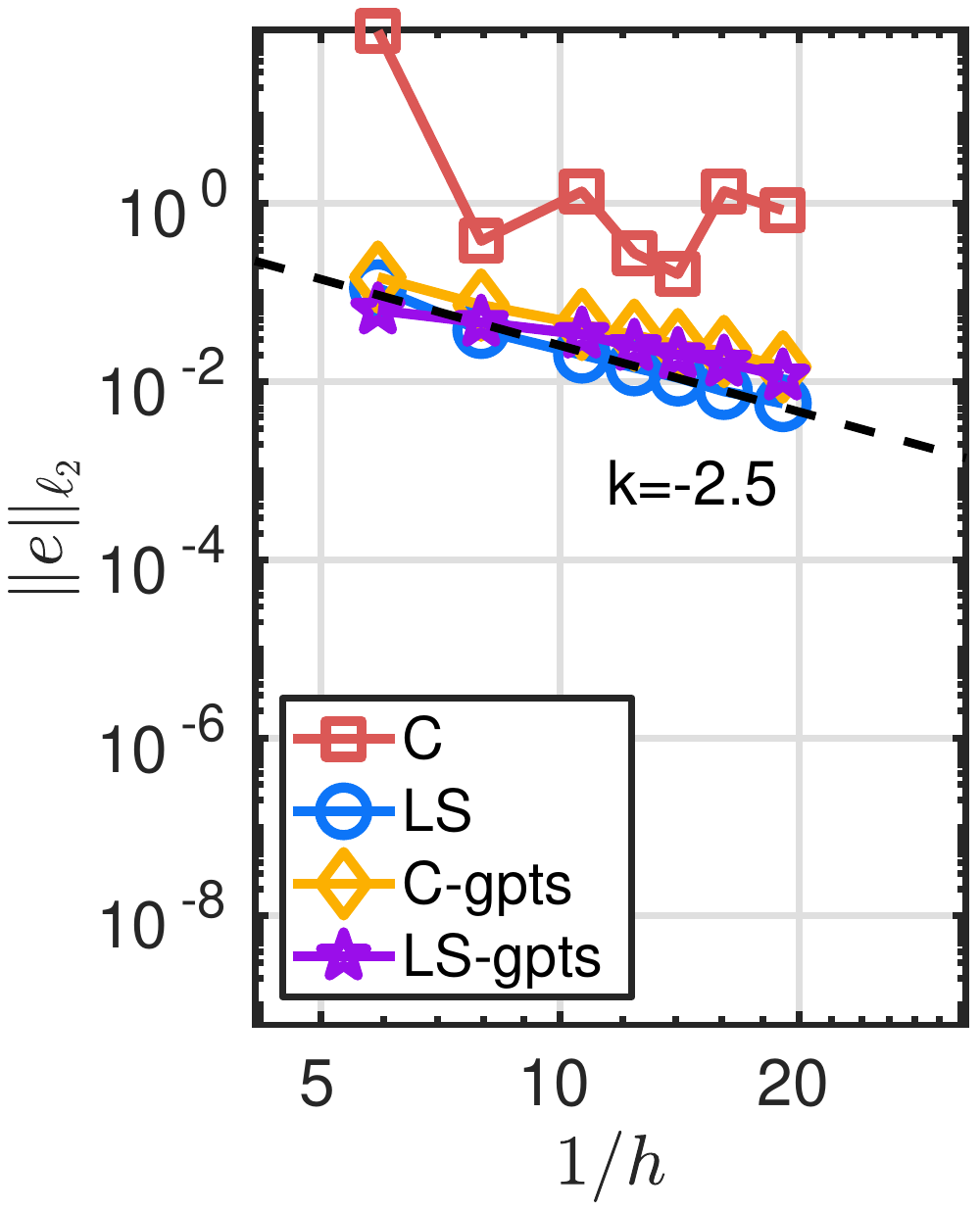} &
        \includegraphics[width=0.28\linewidth]{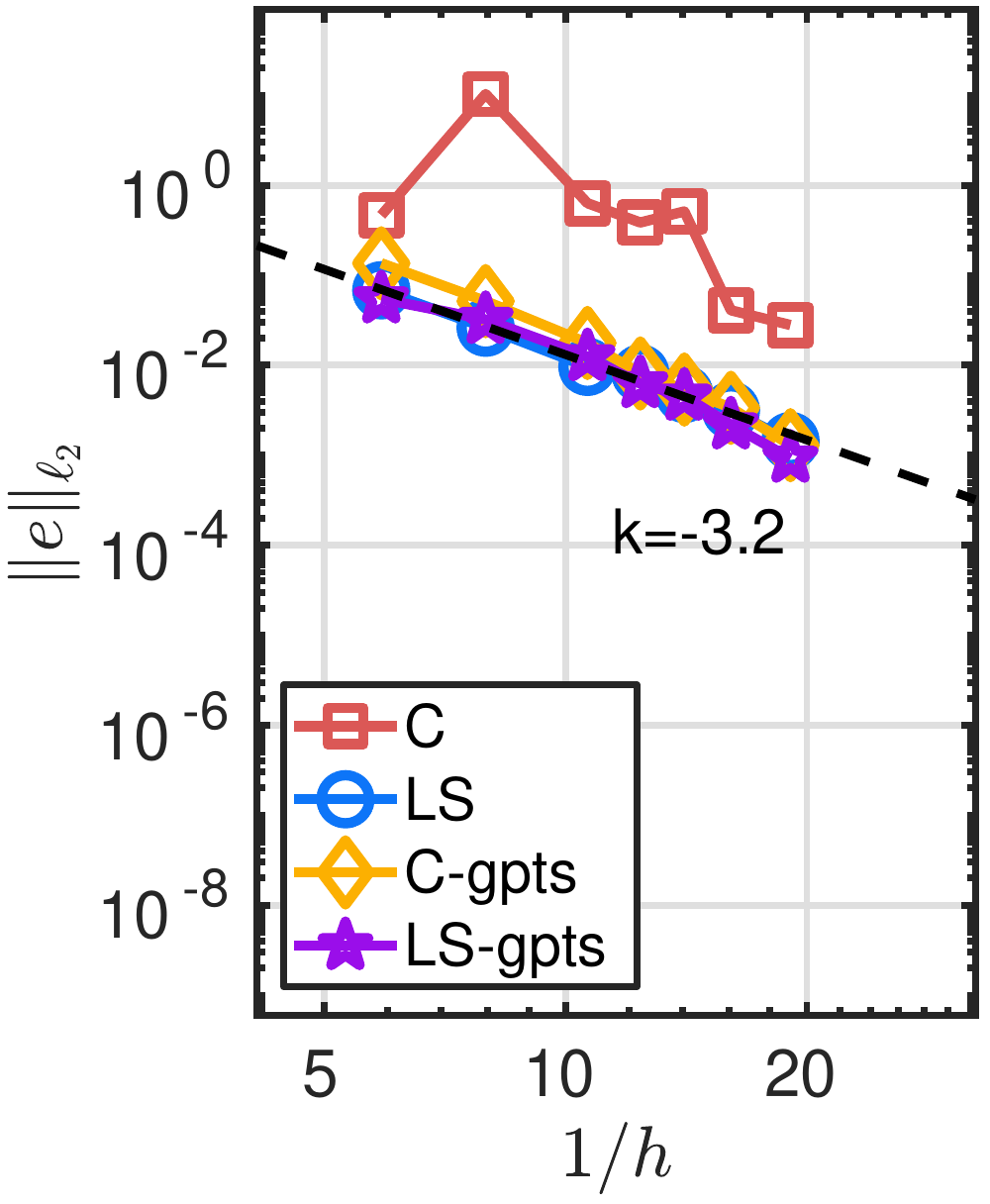} 
    \end{tabular}
    \caption{Error under nodal refinement in a three dimensional case for polynomial degrees $p=3$, $4$ and $5$. The oversampling parameter 
    is $q=6$ and the chosen $1/h$ corresponds to $N=1000$, $2000$, $4000$, $6000$, $8000$, $12000$ and $24000$ points spread over a 3D domain.}
    \label{fig:experiments:3D:error_hrefinement}
\end{figure}

\new{A study related to the stability norm in 3D is given in \cref{fig:experiments:3D:stability_hrefinement}. The experiment confirms that the numerical (and theoretical) observations in 2D generalize to 3D as well, since 
the stability norm of RBF-FD-LS does not have an unpredictable behavior.}
\begin{figure}[!htb]
  \centering
  \small
    \begin{tabular}{ccc}
        \phantom{xxxx}$p=2$ & \phantom{xxxx}$p=3$ & \phantom{xxxx}$p=4$ \\
    \includegraphics[width=0.28\linewidth]{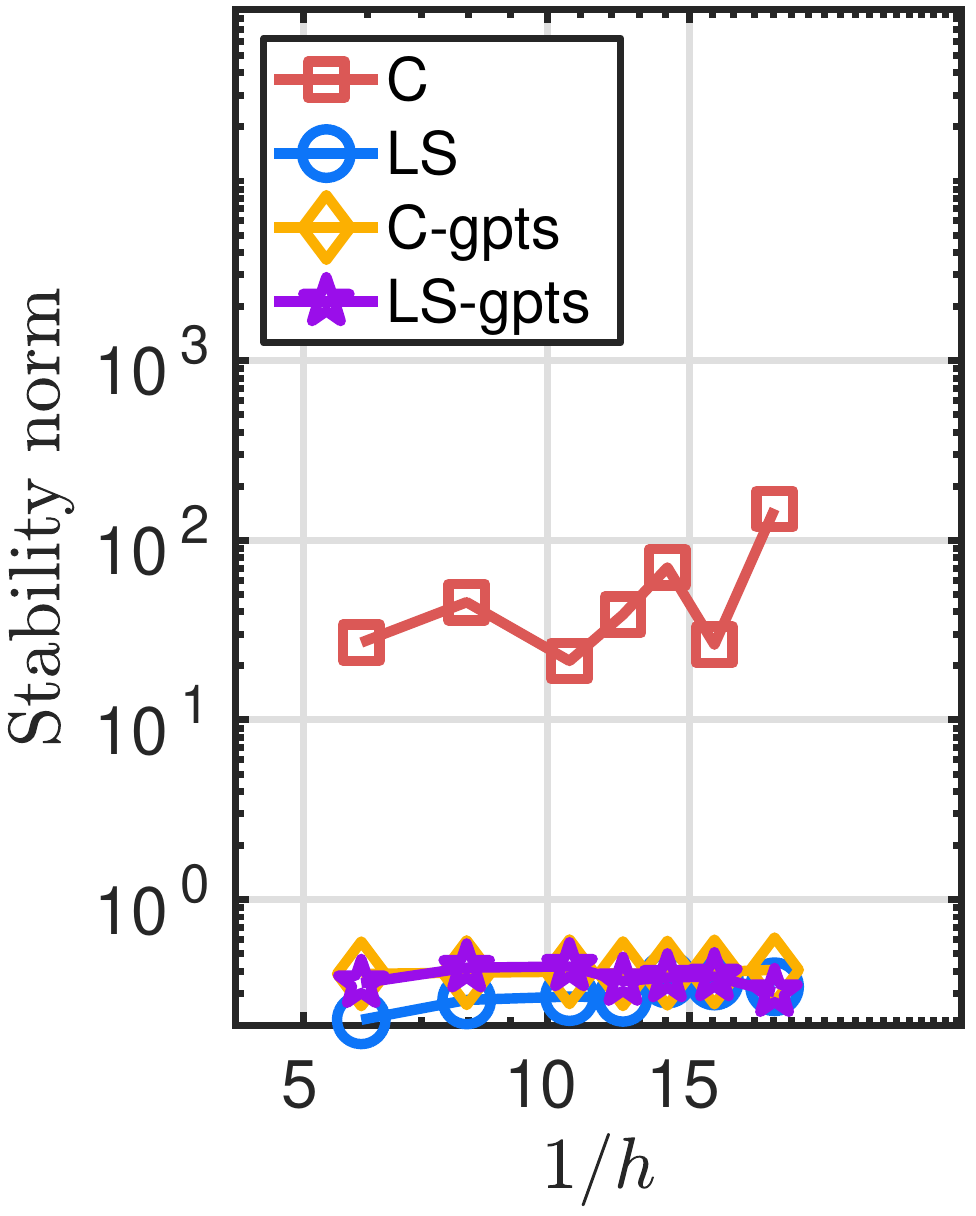} &
        \includegraphics[width=0.28\linewidth]{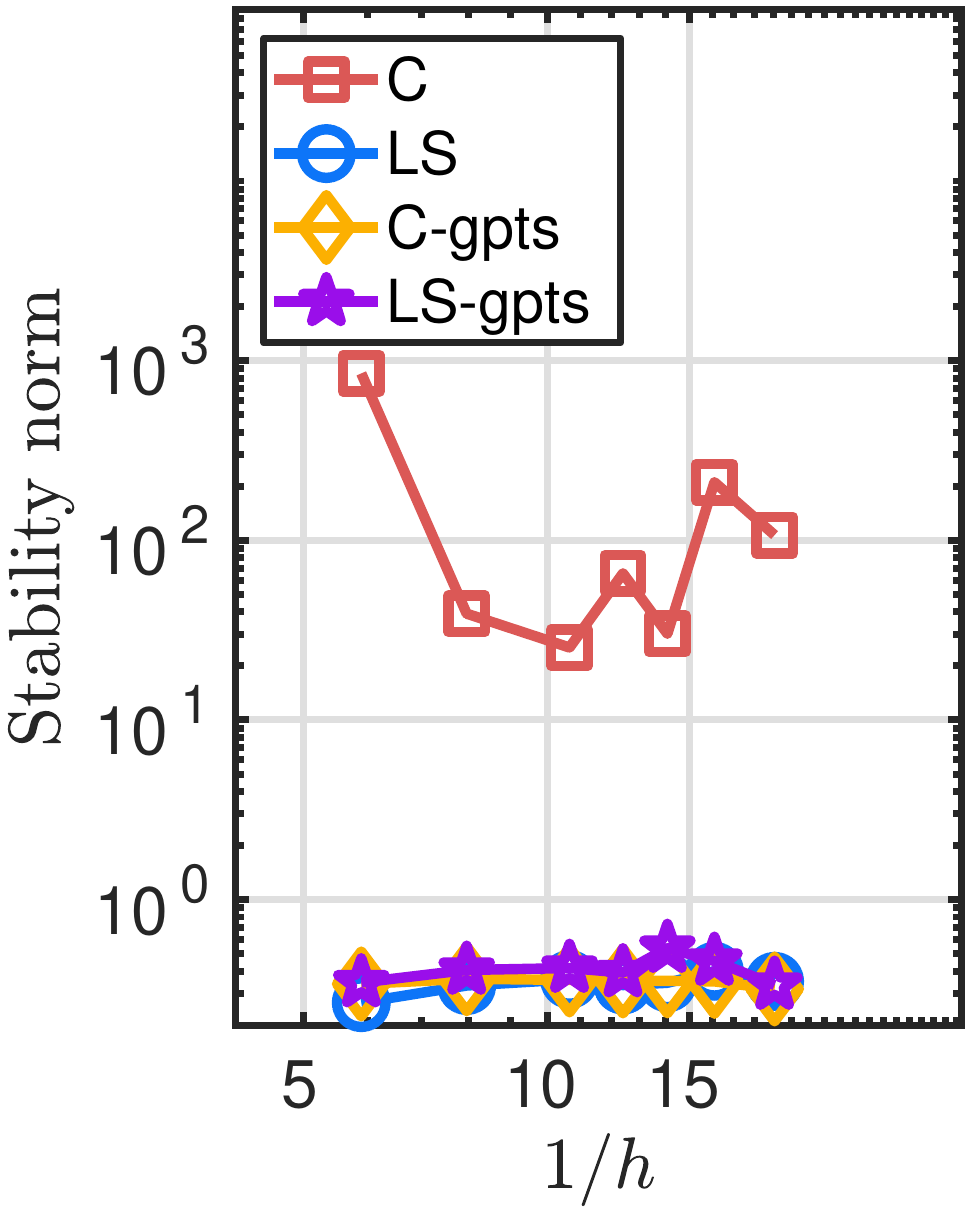} &
        \includegraphics[width=0.28\linewidth]{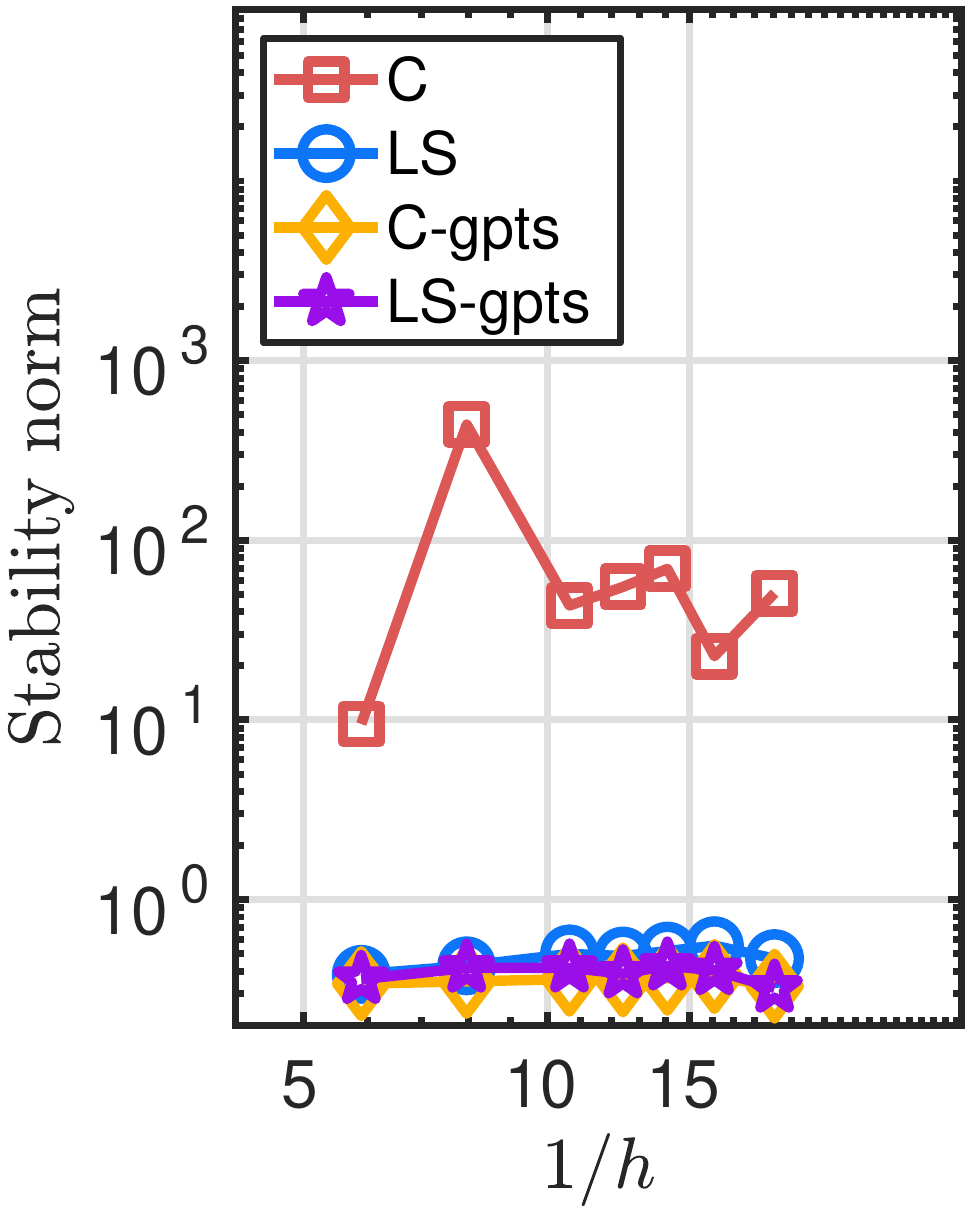} 
    \end{tabular}
    \caption{Stability norm under nodal refinement in a three dimensional case for polynomial degrees $p=3$, $4$ and $5$. The oversampling parameter 
    is $q=6$ and the chosen $1/h$ corresponds to $N=1000$, $2000$, $4000$, $6000$, $8000$, $12000$ and $24000$ points in a 3D domain.}
    \label{fig:experiments:3D:stability_hrefinement}
\end{figure}

\section{Final remarks}
\label{sec:finalremarks}
In this paper we introduced an enhancement of the collocation based RBF-FD method 
where we instead use a least-squares approach. The main method parameters are the node distance $h$, the evaluation node distance $h_y$, and the polynomial degree $p$ used to form the stencil approximations. 
The least squares formulation led us to 
characterize the RBF-FD trial space as a piecewise continuous space with jumps that vanish together with the local 
approximation error,  
and to understand that $\bar{D}_h^T \bar{D}_h u = \bar{D}_h^T f$ reproduces the $L_2$ inner-products of the continuous least-squares problem 
up to an error governed by
$h_y$.
This allowed us to prove well-posedness (stability) of RBF-FD-LS for an elliptic problem when $h_y$ is small enough in relation to $h$.
We also derived an error estimate in terms of the node distance, where the error decays with no less than order 
$p-1$ for the 
Poisson problem with Dirichlet and Neumann boundary conditions.

The experiments confirmed the theoretical observations in terms of the convergence trend as a sequence of $h$ gets increasingly small. 
We also confirmed that as $h$ is fixed at a small value, the stability norm and the error are improved as $h_y \to 0$, until 
both level out. This happens when the effect of the numerical integration becomes negligible. 

\new{An experimental comparison of RBF-FD-LS and RBF-FD-C with ghost points revealed that both methods are comparable in robustness. 
Even though, we believe that RBF-FD-LS has an advantage due to a better theoretical understanding (at least at the present moment) compared to RBF-FD-C with ghost points.}

\new{Overall,} the numerical experiments indicated that RBF-FD-LS \new{(no ghost points)} for our model problem 
performs better than RBF-FD-C \new{(no ghost points)} in terms of:
\begin{itemize}
\item the error against the exact solution for $p$-refinement and $h$-refinement,
\item the stability properties,
\item the efficiency.
\end{itemize}
The most important strength 
of the least-squares formulation is the robustness of the numerical solution as $h$ is decreased, which, according to 
our experience, is often lacking in the collocation formulation, especially in the presence of Neumann boundary conditions.


\section*{Acknowledgments}
For valuable discussions we thank Murtazo Nazarov, Gunilla Kreiss and Eva Breznik from Uppsala University, and Axel M{\aa}lqvist from Chal\-mers University of Technology. 
The third author thanks University of Massachusetts Dartmouth for financially supporting his sabbatical leave and 
Department of Information Technology, Uppsala University, for hosting his sabbatical visit in Spring 2019.

\bibliographystyle{siamplain}
\bibliography{refs}

\begin{thebibliography}{10}

\bibitem{APST17}
{\sc C.~Aistleitner, F.~Pausinger, A.~M. Svane, and R.~F. Tichy}, {\em On
  functions of bounded variation}, Math. Proc. Cambridge Philos. Soc., 162
  (2017), pp.~405--418, \url{https://doi.org/10.1017/S0305004116000633}.

\bibitem{Barnett15}
{\sc G.~A. Barnett}, {\em A {R}obust {RBF}-{FD} {F}ormulation based on
  {P}olyharmonic {S}plines and {P}olynomials}, {P}h.{D}. thesis, University of
  Colorado at Boulder, Dept. of Applied Mathematics, Boulder, CO, USA, 2015.

\bibitem{Bayona19}
{\sc V.~Bayona}, {\em An insight into {RBF}-{FD} approximations augmented with
  polynomials}, Comput. Math. Appl., 77 (2019), pp.~2337--2353,
  \url{https://doi.org//10.1016/j.camwa.2018.12.029}.

\bibitem{BFF19}
{\sc V.~Bayona, N.~Flyer, and B.~Fornberg}, {\em On the role of polynomials in
  {RBF}-{FD} approximations: {III}. {B}ehavior near domain boundaries}, J.
  Comput. Phys., 380 (2019), pp.~378--399,
  \url{https://doi.org/10.1016/j.jcp.2018.12.013}.

\bibitem{BFFB17}
{\sc V.~Bayona, N.~Flyer, B.~Fornberg, and G.~A. Barnett}, {\em On the role of
  polynomials in {RBF}-{FD} approximations: {II}. {N}umerical solution of
  elliptic {PDE}s}, J. Comput. Phys., 332 (2017), pp.~257--273,
  \url{https://doi.org/10.1016/j.jcp.2016.12.008}.

\bibitem{BouCha07}
{\sc A.~Boulkhemair and A.~Chakib}, {\em On the uniform {P}oincar\'{e}
  inequality}, Comm. Partial Differential Equations, 32 (2007), pp.~1439--1447,
  \url{https://doi.org/10.1080/03605300600910241}.

\bibitem{BCGT13a}
{\sc L.~Brandolini, L.~Colzani, G.~Gigante, and G.~Travaglini}, {\em A
  {K}oksma-{H}lawka inequality for simplices}, in Trends in harmonic analysis,
  vol.~3 of Springer INdAM Ser., Springer, Milan, 2013, pp.~33--46,
  \url{https://doi.org/10.1007/978-88-470-2853-1_3}.

\bibitem{BCGT13b}
{\sc L.~Brandolini, L.~Colzani, G.~Gigante, and G.~Travaglini}, {\em On the
  {K}oksma-{H}lawka inequality}, J. Complexity, 29 (2013), pp.~158--172,
  \url{https://doi.org/10.1016/j.jco.2012.10.003}.

\bibitem{BreSco08}
{\sc S.~C. Brenner and L.~R. Scott}, {\em The mathematical theory of finite
  element methods}, vol.~15 of Texts in Applied Mathematics, Springer, New
  York, third~ed., 2008, \url{https://doi.org/10.1007/978-0-387-75934-0}.

\bibitem{Davydov19}
{\sc O.~Davydov}, {\em Error bounds for a least squares meshless finite
  difference method on closed manifolds}, 2019,
  \url{https://arxiv.org/abs/1910.03359}.

\bibitem{FBW16}
{\sc N.~Flyer, G.~A. Barnett, and L.~J. Wicker}, {\em Enhancing finite
  differences with radial basis functions: experiments on the {N}avier-{S}tokes
  equations}, J. Comput. Phys., 316 (2016), pp.~39--62,
  \url{https://doi.org/10.1016/j.jcp.2016.02.078}.

\bibitem{FFBB16}
{\sc N.~Flyer, B.~Fornberg, V.~Bayona, and G.~A. Barnett}, {\em On the role of
  polynomials in {RBF}-{FD} approximations: {I}. {I}nterpolation and accuracy},
  J. Comput. Phys., 321 (2016), pp.~21--38,
  \url{https://doi.org/10.1016/j.jcp.2016.05.026}.

\bibitem{FoFly15}
{\sc B.~Fornberg and N.~Flyer}, {\em Fast generation of 2-{D} node
  distributions for mesh-free {PDE} discretizations}, Comput. Math. Appl., 69
  (2015), pp.~531--544, \url{https://doi.org/10.1016/j.camwa.2015.01.009}.

\bibitem{FoFlyRu10}
{\sc B.~Fornberg, N.~Flyer, and J.~M. Russell}, {\em Comparisons between
  pseudospectral and radial basis function derivative approximations}, IMA J.
  Numer. Anal., 30 (2010), pp.~149--172,
  \url{https://doi.org/10.1093/imanum/drn064}.

\bibitem{LaShcher17}
{\sc E.~Larsson, V.~Shcherbakov, and A.~Heryudono}, {\em A least squares radial
  basis function partition of unity method for solving {PDE}s}, SIAM J. Sci.
  Comput., 39 (2017), pp.~A2538--A2563,
  \url{https://doi.org/10.1137/17M1118087}.

\bibitem{LaTho03}
{\sc S.~Larsson and V.~Thom\'{e}e}, {\em Partial differential equations with
  numerical methods}, vol.~45 of Texts in Applied Mathematics, Springer-Verlag,
  Berlin, 2003.

\bibitem{LiKeChu06}
{\sc G.~R. Liu, B.~B.~T. Kee, and L.~Chun}, {\em A stabilized least-squares
  radial point collocation method ({LS}-{RPCM}) for adaptive analysis}, Comput.
  Methods Appl. Mech. Engrg., 195 (2006), pp.~4843--4861,
  \url{https://doi.org/10.1016/j.cma.2005.11.015}.

\bibitem{Micchelli86}
{\sc C.~A. Micchelli}, {\em Interpolation of scattered data: distance matrices
  and conditionally positive definite functions}, Constr. Approx., 2 (1986),
  pp.~11--22, \url{https://doi.org/10.1007/BF01893414}.

\bibitem{MivSy18}
{\sc S.~Milovanovi\'c and L.~von Sydow}, {\em A high order method for pricing
  of financial derivatives using radial basis function generated finite
  differences}, 2018, \url{https://arxiv.org/abs/1808.05890}.

\bibitem{Mirzaei20}
{\sc D.~Mirzaei}, {\em The direct radial basis function partition of unity
  ({D}-{RBF}-{PU}) method for solving {PDE}s}, SIAM J. Sci. Comp.,  (2020).
\newblock To appear.

\bibitem{distmesh}
{\sc P.-O. Persson and G.~Strang}, {\em A simple mesh generator in {M}atlab},
  SIAM Rev., 46 (2004), pp.~329--345,
  \url{https://doi.org/10.1137/S0036144503429121},
  \url{https://doi.org/10.1137/S0036144503429121}.

\bibitem{PeLiPiRu19}
{\sc A.~Petras, L.~Ling, C.~Piret, and S.~J. Ruuth}, {\em A least-squares
  implicit {RBF}-{FD} closest point method and applications to {PDE}s on moving
  surfaces}, J. Comput. Phys., 381 (2019), pp.~146--161,
  \url{https://doi.org/10.1016/j.jcp.2018.12.031}.

\bibitem{PlaDri06}
{\sc R.~B. Platte and T.~A. Driscoll}, {\em Eigenvalue stability of radial
  basis function discretizations for time-dependent problems}, Comput. Math.
  Appl., 51 (2006), pp.~1251--1268,
  \url{https://doi.org/10.1016/j.camwa.2006.04.007}.

\bibitem{Schaback16}
{\sc R.~Schaback}, {\em All well-posed problems have uniformly stable and
  convergent discretizations}, Numer. Math., 132 (2016), pp.~597--630,
  \url{https://doi.org/10.1007/s00211-015-0731-8}.

\bibitem{Scho38}
{\sc I.~J. Schoenberg}, {\em Metric spaces and completely monotone functions},
  Ann. of Math. (2), 39 (1938), pp.~811--841,
  \url{https://doi.org/10.2307/1968466}.

\bibitem{Shankar17}
{\sc V.~Shankar}, {\em The overlapped radial basis function-finite difference
  ({RBF}-{FD}) method: a generalization of {RBF}-{FD}}, J. Comput. Phys., 342
  (2017), pp.~211--228, \url{https://doi.org/10.1016/j.jcp.2017.04.037}.

\bibitem{ShaKiFo18}
{\sc V.~Shankar, R.~M. Kirby, and A.~L. Fogelson}, {\em Robust node generation
  for mesh-free discretizations on irregular domains and surfaces}, SIAM J.
  Sci. Comp., 40 (2018), pp.~A2584--A2608,
  \url{https://doi.org/10.1137/17M114090X}.

\bibitem{ShuDiYe03}
{\sc C.~Shu, H.~Ding, and K.~Yeo}, {\em Local radial basis function-based
  differential quadrature method and its application to solve two-dimensional
  incompressible {N}avier--{S}tokes equations}, Comput. Methods Appl. Mech.
  Engrg., 192 (2003), pp.~941--954,
  \url{https://doi.org/10.1016/S0045-7825(02)00618-7}.

\bibitem{SlaKo18}
{\sc J.~Slak and G.~Kosec}, {\em On generation of node distributions for
  meshless {PDE} discretizations}, SIAM J. Sci. Comp., 41 (2019),
  pp.~A3202--A3229, \url{https://doi.org/10.1137/18M1231456}.

\bibitem{Tol00}
{\sc A.~I. Tolstykh}, {\em On using {RBF}-based differencing formulas for
  unstructured and mixed structured-unstructured grid calculations}, in
  Proceedings of the 16th IMACS World Congress on Scientific Computation,
  Applied Mathematics and Simulation, Lausanne, Switzerland, 2002.

\bibitem{Tominec_rbffdcode2021}
{\sc I.~Tominec}, {\em Rectangular and square {RBF-FD} matrices in {MATLAB}}.
\newblock \url{https://github.com/IgorTo/rbf-fd}, 2021,
  \url{https://doi.org/10.5281/zenodo.4525550}.

\bibitem{vdSFo19}
{\sc K.~van~der Sande and B.~Fornberg}, {\em Fast variable density 3-{D} node
  generation}, 2019, \url{https://arxiv.org/abs/1906.00636}.

\bibitem{Wend02}
{\sc H.~Wendland}, {\em Fast evaluation of radial basis functions: methods
  based on partition of unity}, in Approximation theory, {X} ({S}t. {L}ouis,
  {MO}, 2001), Innov. Appl. Math., Vanderbilt Univ. Press, Nashville, TN, 2002,
  pp.~473--483.

\bibitem{Wend05}
{\sc H.~Wendland}, {\em Scattered data approximation}, vol.~17 of Cambridge
  Monographs on Applied and Computational Mathematics, Cambridge University
  Press, Cambridge, 2005.

\bibitem{WriFo06}
{\sc G.~B. Wright and B.~Fornberg}, {\em Scattered node compact finite
  difference-type formulas generated from radial basis functions}, J. Comput.
  Phys., 212 (2006), pp.~99--123,
  \url{https://doi.org/10.1016/j.jcp.2005.05.030}.

\end{thebibliography}

\end{document}